\documentclass{amsart}
\usepackage{amsmath,amssymb}
\usepackage{amsthm}
\usepackage{latexsym}
\usepackage[square,sort,comma,numbers]{natbib}

\newcommand{\vX}{\mathtt{X}}
\newcommand{\bvX}{\bar{\mathtt{X}}}
\newcommand{\vS}{\mathsf{S}}
\newcommand{\vD}{\mathsf{D}}

\newcommand{\vP}{\mathsf{S}}
\newcommand{\sfp}{\mathsf{p}}

\newcommand{\vI}{\mathsf{I}}

\newcommand{\vu}{\boldsymbol{u}}
\newcommand{\vv}{\boldsymbol{v}}
\newcommand{\vw}{\boldsymbol{w}}

\newcommand{\myP}{/\hspace{-3pt}/}

\newcommand{\NN}{\mathbb{N}}
\newcommand{\bP}{\mathbb{P}}
\newcommand{\vE}{\mathbb{E}}
\newcommand{\vV}{\mathbb{V}}
\newcommand{\vG}{\mathbb{G}}
\newcommand{\ZZ}{\mathbb{Z}}

\newcommand{\RR}{\mathbb{R}}
\newcommand{\CC}{\mathbb{C}}
\newcommand{\EE}{\mathbb{E}}

\newcommand{\MM}{\mathsf{M}}

\newcommand{\tr}{\mbox{tr}}
\newcommand{\Ric}{\mbox{Ric}}

\newcommand{\ud}{\textup{d}}

\newcommand{\argmin}{\operatornamewithlimits{argmin}}

\theoremstyle{definition}
\newtheorem{theorem}{Theorem}[section]
\newtheorem{definition}[theorem]{Definition}

\newtheorem{lemma}[theorem]{Lemma}
\newtheorem{thm}{Theorem}[section]

\newtheorem{corollary}[thm]{Corollary}

\theoremstyle{remark}
\newtheorem*{remark}{Remark}
\newtheorem*{example}{Example}
\newtheorem{assumption}[thm]{Assumption}

\title[Spectral Convergence of the connection Laplacian]{Spectral Convergence of the connection Laplacian from random samples}

\author[A.~Singer]{A.~Singer}
\address{Department of Mathematics and Program in Applied and Computational Mathematics, Princeton University} 
\email{amits@math.princeton.edu}
\author[H.-T.~Wu]{H.-T.~Wu}
\address{University of Toronto, Department of Mathematics}
\email{hauwu@math.toronto.edu}

\begin{document}

\maketitle

\begin{abstract}
Spectral methods that are based on eigenvectors and eigenvalues of discrete graph Laplacians, such as Diffusion Maps and Laplacian Eigenmaps are often used for manifold learning and non-linear dimensionality reduction. It was previously shown by Belkin and Niyogi \cite{belkin_niyogi:2007} that the eigenvectors and eigenvalues of the graph Laplacian converge to the eigenfunctions and eigenvalues of the Laplace-Beltrami operator of the manifold in the limit of infinitely many data points sampled independently from the uniform distribution over the manifold. Recently, we introduced Vector Diffusion Maps and showed that the connection Laplacian of the tangent bundle of the manifold can be approximated from random samples. In this paper, we present a unified framework for approximating other connection Laplacians over the manifold by considering its principle bundle structure. We prove that the eigenvectors and eigenvalues of these Laplacians converge in the limit of infinitely many independent random samples. We generalize the spectral convergence results to the case where the data points are sampled from a non-uniform distribution, and for manifolds with and without boundary.
\end{abstract}

\section{Introduction}

A recurring problem in fields such as neuroscience, computer graphics and image processing is that of organizing a set of $3$-dim objects by pairwise comparisons. For example, the objects can be $3$-dim brain functional magnetic resonance imaging (fMRI) images \cite{Huettel_Song_McCarthy:2008} that correspond to similar functional activity. In order to separate the actual sources of variability among the images from the nuisance parameters that correspond to different conditions of the acquisition process, the images are initially registered and aligned. Similarly, the shape space analysis problem in computer graphics \cite{Ovsjanikov_Ben-Chen_Solomon_Butscher_Guibas:2012} involves the organization of a collection of shapes. Also in this problem it is desired to factor out nuisance shape deformations, such as rigid transformations.

Once the nuisance parameters have been factored out, methods such as Diffusion Maps (DM) \cite{coifman_lafon:2006} or Laplacian Eigenmaps (LE) \cite{belkin_niyogi:2003} can be used for non-linear dimensionality reduction, classification and clustering. In \cite{singer_wu:2012}, we introduced Vector Diffusion Maps (VDM) as an algorithmic framework for organization of such data sets that simultaneously takes into account the nuisance parameters and the data affinities by a single computation of the eigenvectors and eigenvalues of the graph connection Laplacian (GCL) that encodes both types of information. In \cite{singer_wu:2012}, we also proved pointwise convergence of the GCL to the connection Laplacian of the tangent bundle of the data manifold in the limit of infinitely many sample points. The main contribution of the current paper is a proof for the spectral convergence of the CGL to the connection Laplacian operator over the vector bundle of the data manifold. In passing, we also provide a spectral convergence result for the graph Laplacian (normalized properly)  to the Laplace-Beltrami operator in the case of non-uniform sampling and for manifolds with non-empty boundary, thus broadening the scope of a previous result of Belkin and Niyogi \cite{belkin_niyogi:2007}.

At the center of LE, DM, and VDM is a weighted undirected graph, whose vertices correspond to the data objects and the weights quantify the affinities between them.
A commonly used metric is the Euclidean distance, and the affinity can then be described using a kernel function of the distance. For example, if the data set $\{x_1,x_2,\ldots,x_n\}$ consists of $n$ functions in $L_2(\mathbb{R}^3)$ then the distances are given by 
\begin{equation}
d_E(x_i,x_j):=\|x_i-x_j\|_{L^2(\mathbb{R}^3)},
\end{equation}
and the weights can be defined using the Gaussian kernel with width $\sqrt{h}$ as
\begin{equation}
\label{eq:wij-euclidean}
w_{ij} = e^{-\frac{d_E^2(x_i,x_j)}{2h}}.
\end{equation}

However, the Euclidean distance is sensitive to the nuisance parameters. In order to factor out the nuisance parameters, it is required to use a metric which is invariant to the group of transformations associated with those parameters, denoted by $G$. Let $\mathcal{X}$ be the total space from which data is sampled. The group $G$ acts on $\mathcal{X}$ and instead of measuring distances between elements of $\mathcal{X}$, we want to measure distances between their orbits. The orbit of a point $x\in \mathcal{X}$ is the set of elements of $\mathcal{X}$ to which $x$ can be mapped by the elements of $G$, denoted by $$Gx = \left\{g\circ x \,|\, g\in G \right\}$$ The group action induces an equivalence relation on $\mathcal{X}$ and the orbits are the equivalence classes, such that the equivalence class $[x]$ of $x\in \mathcal{X}$ is $Gx$. The invariant metric is a metric on the orbit space $\mathcal{X}/G$ of equivalent classes.

One possible way of constructing the invariant metric $d_G$ is through optimal alignment, given as
\begin{equation}
d_G([x_i],[x_j]) = \inf_{g_i,g_j\in G} d_E(g_i\circ x_i, g_j\circ x_j).
\end{equation}
If the action of the group is an isometry, then
\begin{equation}
\label{eq:inv-metric}
d_G([x_i],[x_j]) = \inf_{g\in G} d_E(x_i, g\circ x_j).
\end{equation}
For example, if $\mathcal{X}=L^2(\mathbb{R}^3)$ and $G$ is $O(3)$ (the group of $3\times 3$ orthogonal matrices), then the left action 
\begin{equation}
(g\circ f)(x) = f(g^{-1}x)
\end{equation}
is an isometry, and 
\begin{equation}
\label{eq:min-g}
d_G^2([f_i],[f_j]) = \min_{g\in O(3)} \int_{\mathbb{R}^3} |f_i(x) - f_j(g^{-1}x)|^2\,\ud x.
\end{equation}

In this paper we only consider groups that are either orthogonal and unitary, for three reasons. First, this condition guarantees that the GCL is symmetric (or Hermitian). Second, the action is an isometry and the invariant metric (\ref{eq:inv-metric}) is well defined. Third, it is a compact group and the minimizer of (\ref{eq:min-g}) is well defined.

The invariant metric $d_G$ can be used to define weights between data samples, for example, the Gaussian kernel gives
\begin{equation}
\label{eq:wij-invariant}
w_{ij} = e^{-\frac{d_G^2([x_i],[x_j])}{2h}}.
\end{equation}
While LE and DM with weights given in (\ref{eq:wij-euclidean}) correspond to diffusion over the original space $\mathcal{X}$, LE and DM with weights given in (\ref{eq:wij-invariant}) correspond to diffusion over the orbit space $\mathcal{X}/G$. In VDM, the weights (\ref{eq:wij-invariant}) are also accompanied by the optimal transformations
\begin{equation}
g_{ij} = \argmin_{g\in G} d_E(x_i, g\circ x_j).
\end{equation}
VDM corresponds to diffusion over the vector bundle of the orbit space $\mathcal{X}/G$ associated with the group action. The following existing examples demonstrate the usefulness of such a diffusion process in data analysis:
\begin{itemize}
\item {\em Manifold learning:} Suppose we are given a point cloud randomly sampled from a $d$-dim smooth manifold $\MM$ embedded in $\RR^p$. Due to the smoothness of $\MM$, the embedded tangent bundle of $\MM$ can be estimated by local principal component analysis \cite{singer_wu:2012}. All bases of an embedded tangent plane at $x$ form a group isomorphic to $O(d)$. Since the bases of the embedded tangent planes form the frame bundle $O(\MM)$, from this point cloud we obtain a set of samples from the frame bundle which form the total space $\mathcal{X}=O(\MM)$. Since the set of all the bases of an embedded tangent plane is invariant under the action of $O(d)$, for the purpose of learning the manifold $\MM$, we take $O(d)$ as the nuisance group, and hence the orbit space is $\MM=O(\MM)/O(d)$. As shown in \cite{singer_wu:2012}, the generator of the diffusion process corresponding to VDM is the connection Laplacian associated with the tangent bundle. With the eigenvalues and eigenvectors of the connection Laplacian, the point cloud is embedded in an Euclidean space. We refer to the Euclidean distance in the embedded space as the {\it vector diffusion distance (VDD)}, which provides a metric for the point cloud. It is shown in \cite{singer_wu:2012} that VDD approximates the geodesic distance between nearby points on the manifold. Furthermore, by VDM, we extend the earlier spectral embedding theorem \cite{berard_besson_gallot:1994} by constructing a distance in a class of closed Riemannian manifolds with prescribed geometric conditions, which leads to a pre-compactness theorem on the class under consideration \cite{Wu:2013}.
\item {\em Orientability:} Suppose we are given a point cloud randomly sampled from a $d$-dim smooth manifold $\MM$ and we want to learn its orientability. Since the frame bundle encodes whether or not the manifold is orientable, we take the nuisance group as $\ZZ_2$ defined as the determinant of the action $O(d)$ from the previous example. In other words, the orbit of each point on the manifold is $\ZZ_2$, the total space $\mathcal{X}$ is the $\ZZ_2$ bundle on $\MM$ following the orientation, and the orbit space is $\MM$. With the nuisance group $\ZZ_2$, {\it Orientable Diffusion Maps} (ODM) proposed in \cite{singer_wu:2011} can be considered as a variation of VDM in order to estimate the orientability of $\MM$ from a finite collection of random samples.
\item {\em Cryo-EM:} The X-ray transform often serves as a mathematical model to many medical and biological imaging modalities, for example, in cryo-electron microscopy \cite{frank:2006}. In cryo-electron microscopy, the 2-dim projection images of the 3-dim object are noisy and their projection directions are unknown. For the purpose of denoising, it is required to classify the images and average images with similar projection directions, a procedure known as class averaging. When the object of interest has no symmetry, the projection images have a one-to-one correspondence with a manifold diffeomorphic to $SO(3)$. Notice that $SO(3)$ can be viewed as the set of all right-handed bases of all tangent planes to $S^2$, and the set of all right-handed bases of a tangent plane is isomorphic to $SO(2)$. Since the projection directions are parameterized by $S^2$ and the set of images with the same projection direction is invariant under the $SO(2)$ action, we learn the projection direction by taking $SO(2)$ as the nuisance group and $S^2$ as the orbit space. The vector diffusion distance provides a metric for classification of the projection directions in $S^2$, and this metric has been shown to outperform other classification methods \cite{singer_zhao_shkolnisky_hadani:2011,Hadani_Singer:2011a,Zhao_Singer:2014}.
\end{itemize}

The main contribution of this paper is twofold. First, we use the mathematical framework of the {\it principal bundle} \cite{bishop} in order to analyze the relationship between the nuisance group and the orbit space and how their combination can be used to learn the dataset. In this setup, the total space is the principal bundle, the orbit space is the base manifold, and the orbit is the fiber. This principal bundle framework unifies LE, DM, ODM, and VDM by providing a common mathematical language to all of them. Second, for data points that are independently sampled from the uniform distribution over a manifold, in addition to showing pointwise convergence of VDM in the general principal bundle setup, in Theorem \ref{thm:spectralconvergence_heatkernel} we prove that the algorithm converges in the spectral sense, that is, the eigenvalues and the eigenvectors computed by the algorithm converge to the eigenvalues and the eigen-vector-fields of the connection Laplacian of the associated vector bundle. Our pointwise and spectral convergence results also hold for manifolds with boundary and in the case where data points are sampled independently from non-uniform distributions (that satisfy mild technical conditions). We also show spectral convergence of the GCL to the connection Laplacian of the associated tangent bundle in Theorem \ref{thm:spec_conv_pointcloud_heatkernel} when the tangent bundle is estimated from the point cloud. The importance of these spectral convergence results stem from the fact that they provide a theoretical guarantee  in the limit of infinite number of data samples for the above listed problems, namely, estimating vector diffusion distances, determining the orientability of a manifold from a point cloud, and classifying the projection directions of cryo-EM images. In addition, we show that ODM can help reconstruct the orientable double covering of non-orientable manifolds 
by proving a symmetric version of Nash's isometric embedding theorem \cite{Nash1954,Nash1956}.

The rest of the paper is organized as follows. In Section \ref{section:review}, we review VDM and clarify the relationship between the point cloud sampled from the manifold and the bundle structure of the manifold. In Section \ref{section:setup}, we introduce background material and set up the notations. In Section \ref{subsection:tangent_bundle}, we unify LE, DM, VDM, and ODM by taking the principal bundle structure of the manifold into account. In section \ref{proof:PointwiseSpectralConvergence} we prove the first spectral convergence result that assumes knowledge of the bundle structure. The non-empty boundary and nonuniform sampling effects are simultaneously handled. In Section \ref{section:extract_more_info}, we prove the second spectral convergence result when the bundle information is missing and needs to be estimated directly from a finite random point cloud.

\section{The Graph Connection Laplacian and Vector Diffusion Maps}\label{section:review}
Consider an undirected affinity graph $\vG=(\vV,\vE)$, where $\vV=\{x_i\}_{i=1}^{n}$ and fix a $q\in\NN$. Suppose each edge $(i,j)\in \vE$ is assigned a scalar value $w_{ij}>0$ and a group element $g_{ij}\in O(q)$. We call $w_{ij}$ the {\it affinity} between $x_i$ and $x_j$ and $g_{ij}$ the {\it connection group} between {\it vector status} of $x_i$ and $x_j$. We assume that $w_{ij}=w_{ji}$ and $g_{ij}^T = g_{ji}$.
Construct the following $n\times n$ block matrix $\vS_n$ with $q\times q$ entries:
\begin{equation}\label{Svdm}
\vS_n(i,j) = \left\{\begin{array}{ccc}
                  w_{ij}g_{ij} &  & (i,j)\in \vE, \\
                  0_{d\times d} &  & (i,j)\notin \vE.
                \end{array}
 \right.
\end{equation}
Notice that the square matrix $\vS_n$ is symmetric due to the assumption of $w_{ij}$ and $g_{ij}$. 
Define 
$$
d_i = \sum_{(i,j)\in\vE} w_{ij}
$$ 
as the weighted degree of node $i$.
Then define a $n\times n$ diagonal block matrix $\vD_n$ with $q\times q$ entries, where the diagonal blocks are scalar multiples of the identity given by
\begin{equation}\label{Dvdm}
\vD_n(i,i) = d_i\vI_{q},
\end{equation}
where $\vI_q$ is the $q\times q$ identity matrix. The {\it un-normalized GCL} and the {\it normalized GCL} are defined in \cite{singer_wu:2012,bandeira_singer_spielman:2013}
\[
L_n:=\vD_n-\vS_n,\quad \mathcal{L}_n:=\vI_{qn}-\vD_{n}^{-1}\vS_n 
\]
respectively.
Given a $\vv\in \RR^{qn}$, we denote $\vv[l]\in \RR^{q}$ to be the $l$-th component in the vector by saying that $\vv[l]=[\vv((l-1)q+1),\ldots, \vv(lq)]^T\in\RR^{q}$ for all $l=1,\ldots,n$.
The matrix $\vD_n^{-1}\vS_n$ is thus an operator acting on $\boldsymbol{v}\in \RR^{nq}$ by
\begin{align}\label{Cmatrix}
(\vD_n^{-1}\vS_n\boldsymbol{v})[i] = \frac{\sum_{j: (i,j)\in \vE} w_{ij}g_{ij}v[j]}{d_i},
\end{align} 
which suggests the interpretation of $\vD_n^{-1}\vS_n$ as a {\it generalized Markov chain} in the following sense so that the random walker (e.g., diffusive particle) is characterized by a generalized status vector. Indeed, a particle at $i$ is endowed with a $q$-dim vector status, and at each time step it hops from $i$ to $j$ with probability $w_{ij}/d_i$. In the absence of the group, these statuses are separately viewed as $q$ functions defined on $\mathbb{G}$. Notice that the graph Laplacian arises as a special case for  $q=1$ and $g_{ij}=1$.
However, when $q>1$ and $g_{ij}$ are not identity matrices, in general the coordinates of the status vectors do not decouple into $q$ independent processes due to the non-trivial effect of the group elements $g_{ij}$.
Thus, if a particle with status $v[i]\in\RR^{q}$ moves along a path of length $t$ from $j_0$ to $j_t$ containing vertices $j_0,{j_1},\ldots,{j_{t-1}},j_t$ so that $({j_l},{j_{l+1}})\in \vE$ for $0=1,\ldots,t-1$, in the end it becomes
\begin{equation*}
g_{j,j_{t-1}}\cdots g_{j_2,j_1}g_{j_1,i}v[i].
\end{equation*}
That is, when the particle arrives $j$, its vector status is influenced by a series of rotations along the path from $i$ to $j$.
In case there are more than two paths from $i$ to $j$ and the rotational groups on paths vary dramatically, we may get cancelation while adding transformations of different paths. Intuitively, ``the closer two points are'' or ``the less variance of the translational group on the paths is'', the more consistent the vector statuses are between $i$ and $j$. We can thus define a new affinity between $i$ and $j$ by the consistency between the vector statuses. Notice that the matrix $(\vD_n^{-1}\vS_n)^{2t}(i,j)$, where $t>0$, contains the average of the rotational information over all paths of length $2t$ from $i$ to $j$. Thus, the squared Hilbert-Schmidt norm, $\|(\vD_n^{-1}\vS_n)^{2t}(i,j)\|^2_{HS}$, can be viewed as a measure of not only the number of paths of length $2t$ from $i$ to $j$ but also the amount of consistency of the vector statuses that propagated along different paths connecting $i$ and $j$. This motivates to define the affinity  between $i$ and $j$ as $\|(\vD_n^{-1}\vS_n)^{2t}(i,j)\|^2_{HS}$.

To understand this affinity, we consider the symmetric matrix $\widetilde{\vS}_n = \vD_n^{-1/2}\vS_n\vD_n^{-1/2}$ which is similar to $\vD_n^{-1}\vS_n$. Since $\widetilde{\vS}_n$ is symmetric, it has a complete set of eigenvectors $\vv_1,\vv_2,\ldots,\vv_{nq}$ and eigenvalues $\lambda_1, \lambda_2, \ldots, \lambda_{nq}$, where the eigenvalues are the same as those of $\vD_n^{-1}\vS_n$. Order the eigenvalues so that $\lambda_1  \geq  \lambda_2  \geq \ldots \geq  \lambda_{nq} $. A direct calculation of the HS norm of $\widetilde{\vS}_n^{2t}(i,j)$ leads to:
\begin{equation}\label{HS-norm}
\|\widetilde{\vS}_n^{2t}(i,j)\|^2_{HS} = \sum_{l,r=1}^{nq} (\lambda_l \lambda_r)^{2t} \langle \vv_l[i], \vv_r[i] \rangle \langle \vv_l[j], \vv_r[j] \rangle.
\end{equation}
The {\it  vector diffusion map} (VDM) $V_t$ is defined as the following map from $\vG$ to $\RR^{(nq)^2}$:
\begin{equation*}
V_t: i \mapsto \left( (\lambda_l \lambda_r)^{t} \langle \vv_l[i], \vv_r[i] \rangle\right)_{l,r=1}^{nq}.
\end{equation*}
With this map, $\|\widetilde{\vS}_n^{2t}(i,j)\|^2_{HS}$ becomes an inner product for a finite dimensional Hilbert space, that is,
\begin{equation*}
\|\widetilde{\vS}_n^{2t}(i,j)\|^2_{HS} = \langle V_t(i), V_t(j) \rangle.
\end{equation*}
The {\it vector diffusion distance} (VDD) between nodes $i$ and $j$ is defined as  
\[
d^{\textup{(VDD)}}_{t}(i,j):=\| V_t(i)- V_t(j)\|^2.
\] 
Furthermore, $|\lambda_l|\leq 1$ due to the following identity:
\begin{equation}\label{fact:MayBeNegative}
\vv^T (\vI_n \pm \widetilde{\vS}_n) \vv = \sum_{(i,j)\in \EE} \left\|\frac{\vv[i]}{\sqrt{d_i}} \pm \frac{w_{ij}g_{ij}\vv[j]}{\sqrt{d_j}} \right\|^2 \geq 0,
\end{equation}
for any $\vv\in \mathbb{R}^{nq}$. By the above we cannot guarantee that the eigenvalues of $\widetilde{\vS}_n$ are non-negative, and that is the main reason we 
define $V_t$ through $\|\widetilde{\vS}_n^{2t}(i,j)\|^2_{HS}$ rather than $\|\widetilde{\vS}_n^{t}(i,j)\|^2_{HS}$. On the other hand, we know that the unnormalized GCL is positive semi-definite because 
\[
\vv^T(\vD_n-\vS_n)\vv=\sum_{(i,j)\in \vE}w_{ij}\|g_{ij}\vv[j]-\vv[i]\|^2\geq 0.
\] 

We now come back to $\vD^{-1}_n\vS_n$. The eigenvector of $\vD^{-1}_n\vS_n$ associated with eigenvalue $\lambda_l$ is $\vw_l=\vD_n^{-1/2}\vv_l$. 
This motivates the definition of another VDM from $\vG$ to $\RR^{(nq)^2}$ as
\begin{equation*}
V'_t: i \mapsto \left( (\lambda_l \lambda_r)^{t} \langle \vw_l[i], \vw_r[i] \rangle\right)_{l,r=1}^{nq},
\end{equation*}
so that $V'_t(i) = \frac{V_t(i)}{d_i}$. In other words, $V'_t$ maps the data set in a Hilbert space upon proper normalization by the vertex degrees. The associated VDD is thus defined as $\| V'_t(i)- V'_t(j) \|^2$.
For further discussion of the motivation about VDM, VDD and other normalizations, please refer to \cite{singer_wu:2012}.

\section{Notations, Background and Assumptions}\label{section:setup}
In this section, we collect all notations and background facts about differential geometry needed in throughout the paper. 

\subsection{Notations and Background of Differential Geometry}
We refer the readers who are not familiar with the principal bundle structure to Appendix \ref{section:appendix:principal_bundle} for a quick introduction and \cite{bishop,Berline_Getzler_Vergne:2004} for a general treatment. 

Denote $\MM$ to be a $d$-dim compact smooth manifold. If the boundary $\partial\MM$ is non-empty, it is smooth. Denote $\iota:\MM\hookrightarrow\RR^p$ to be a smooth embedding of $\MM$ into $\RR^p$ and equip $\MM$ with the metric $g$ induced from the canonical metric on $\RR^p$ via $\iota$. With the metric $g$ we have an induced measure, denoted as $\ud V$, on $\MM$. Denote
$$
\MM_{t}:=\{x\in\MM:~\min_{y\in\partial\MM}d(x,y)\leq t\},
$$
where $t\geq 0$ and $d(x,y)$ is the geodesic distance between $x$ and $y$.

Denote $P(\MM, G)$ to be the principal bundle with a connection $1$-form $\omega$, where $G$ is a Lie group right acting on $P(\MM,G)$ by $\circ$. Denote $\pi:P(\MM,G)\to \MM$ to be the canonical projection. We call $\MM$ the base space of the principal $G$ bundle and $G$ the structure group or the fiber of the principal bundle.
From the view point of orbit space, $P(\MM,G)$ is the total space, $G$ is the group acting on $P(\MM,G)$, and $\MM$ is the orbit space of $P(\MM,G)$ under the action of $G$. In other words, when our interest is the parametrization of the orbit space, $G$ becomes the nuisance group. 

Denote $\rho$ to be a representation of $G$ into $O(q)$, where $q>0$\footnote{We may also consider representing $G$ into $U(q)$ if we take the fiber to be $\CC^{q}$. However, to simplify the discussion, we focus ourselves on $O(q)$ and the real vector space.}. 
When there is no danger of confusion, we use the same symbol $g$ to denote the Riemannian metric on $\MM$ and an element of $G$. Denote $\mathcal{E}(P(\MM,G),\rho,\RR^{q})$, $q\geq 1$, to be the associated vector bundle with the fiber diffeomorphic to $\RR^{q}$. By definition, $\mathcal{E}(P(\MM,G),\rho,\RR^{q})$ is the quotient space $P(\MM,G)\times\RR^{q}/\sim$, where the equivalence relationship $\sim$ is defined by the group action on $P(\MM,G)\times \RR^{q}$ by $g:(u,v)\to (g\circ u ,\rho(g)^{-1}v)$, where $g\in G$, $u\in P(\MM,G)$ and $v\in \RR^{q}$. When there is no danger of confusion, we use $\mathcal{E}$ to simplify the notation. Denote $\pi_{\mathcal{E}}$ to be the associated canonical projection and $E_x$ to be the fiber of $\mathcal{E}$ on $x\in\MM$; that is, $E_x:=\pi_{\mathcal{E}}^{-1}(x)$. Given a fiber metric $g^{\mathcal{E}}$ in $\mathcal{E}$, which always exists since $\MM$ is compact, we consider the metric connection under which the parallel displacement of fiber of $\mathcal{E}$ is isometric related to $g^{\mathcal{E}}$. The metric connection on $\mathcal{E}$ determined from $\omega$ is denoted as $\nabla^{\mathcal{E}}$. Note that by definition, each $u\in P(\MM,G)$ turns out to be a linear mapping from $\RR^{q}$ to $E_x$ preserving the inner product structure, where $x=\pi(u)$, and satisfies 
\[
(g\circ u)v=u(\rho(g)v)\in E_x, 
\]
where $u\in P(\MM,G)$, $g\in G$ and $v\in \RR^{q}$. We interpret the linear mapping $u$ as finding the point $u(v)\in E_x$ possessing the coordinate $v\in \RR^{q}$. 

\begin{example}
An important example is the frame bundle of the Riemannian manifold $(\MM,g)$, denoted as $O(\MM)=P(\MM,O(d))$, and the tangent bundle $T\MM$, which is the associated vector bundle of the frame bundle $O(\MM)$ if we take $\rho=\textit{id}$ and $q=d$. The relationship among the principal bundle and its associated vector bundle can be better understood by considering the practical meaning of the relationship between the frame bundle and its associated tangent bundle. It is actually the change of coordinate (or change of variable linearly). In fact, if we view a point $u\in O(\MM)$ as the basis of the fiber $T_x\MM$, where $x=\pi(u)$, then the coordinate of a point on the tangent plane $T_x\MM$ changes, that is, $v\to g^{-1}v$, according to the changes of the basis, that is, $u\to g\circ u$, where $g\in O(d)$.
\end{example}

Denote $\Gamma(\mathcal{E})$ to be the set of sections, $C^k(\mathcal{E})$ to be the set of $k$-th differentiable sections, where $k\geq 0$. Also denote $C(\mathcal{E}):=C^0(\mathcal{E})$ to be the set of continuous sections. Denote $L^p(\mathcal{E})$, $1\leq p< \infty$ to be the set of $L^p$ integrable sections, that is, $X\in L^p(\mathcal{E})$ iff $\int |g^{\mathcal{E}} (X  ,X )|^{p/2}\ud V <\infty$. Denote $\|X\|_{L^\infty}$ to be the $L^\infty$ norm of $X$.

The covariant derivative $\nabla^{\mathcal{E}}$ of $X\in C^1(\mathcal{E})$ in the direction $v$ at $x$ is defined as
\begin{equation}\label{definition:parallel:frameX}
\nabla^{\mathcal{E}}_{\dot{c}(0)} X(x)=\lim_{h\to 0}\frac{1}{h}[u(0) u(h)^{-1}(X(c(h)))-X(c(0))],
\end{equation}
where $c:[0,1]\to \MM$ is the curve on $\MM$ so that $c(0)=x$, $\dot{c}(0)=v$ and $u(h)$ is the horizontal lift of $c(h)$ to $P(\MM,G)$ so that $\pi(u(0))=x$. Let $\myP^{x}_{y}$ denote the parallel displacement from $y$ to $x$. When $y$ is in the cut locus of $x$, we set $\myP^x_yX(y)=0$ ; when $h$ is small enough, $\myP^{c(0)}_{c(h)}=u(0) u(h)^{-1}$ by definition. For a smooth section $X$, denote $X^{(l)}$, $l\in\NN$, to be the $l$-th order covariant derivatives of $X$. 

\begin{example}
We can better understand this definition in the frame bundle $O(\MM)$ and its associated tangent bundle. Take $X\in C^1(T\MM)$. The practical meaning of (\ref{definition:parallel:frameX}) is the following: find the coordinate of $X(c(h))$ by $u(h)^{-1}(X(c(h)))$, then view this coordinate to be associated with $T_x\MM$, and map it back to the fiber $T_x\MM$ by the basis $u(0)$. In this way we can compare two different ``abstract fibers'' by comparing their coordinates.  
\end{example}

Denote $\nabla^2$ the connection Laplacian over $\MM$ with respect to $\mathcal{E}$. Denote by $\mathcal{R}$, $\Ric$, and $s$ the Riemanian curvature tensor, the Ricci curvature, and the scalar curvature of $(\MM,g)$, respectively. The second fundamental form of the embedding $\iota$ is denoted by $\textup{II}$.
Denote $\tau$ to be the largest positive number having the property: the open normal bundle about $\MM$ of radius $r$ is embedded in $\RR^p$ for every $r<\tau$ \cite{NSW}. Note that $1/\tau$ can be interpreted as the condition number of the manifold. Since $\MM$ is compact, $\tau>0$ holds automatically . 
Denote $\text{inj}(\MM)$ to be the injectivity radius of $\MM$.

\subsection{Notations and Background of Numerical Finite Samples}
When the range of a random vector $Y$ is supported on a $d$ dimensional manifold $\MM$ embedded in $\RR^p$ via $\iota$, where $d<p$, the notion of probability density function (p.d.f.) may not be defined. It is possible to discuss more general setups, but we restrict ourselves here to the following definition for the sake of the asymptotic analysis \cite{cheng_wu:2012}. Let the random vector $Y:(\Omega,\mathcal{F},\ud P) \rightarrow \RR^p$ be a measurable function defined on the probability space $\Omega$. Let $\tilde{\mathcal{B}}$ be the Borel sigma algebra of $\iota(\MM)$. Denote by $\ud\tilde{P}_Y$ the probability measure of $Y$, defined on $\tilde{\mathcal{B}}$, induced from the probability measure $\ud P$. Assume that $\ud\tilde{P}_{Y}$ is absolutely continuous with respect to the volume measure on $\iota(\MM)$, that is, $\ud \tilde{P}_{Y}(x)=\sfp(\iota^{-1}(x))\iota_*\ud V(x)$.
\begin{definition}
We call $\sfp:\MM\to \RR_+$ the p.d.f. of the $p$-dimensional random vector $Y$ when its range is supported on a $d$ dimensional manifold $\iota(\MM)$, where $d<p$. When $\sfp$ is constant, we say the sampling is {\it uniform}; otherwise {\it non-uniform}.
\end{definition} 
From now on we assume $\sfp\in C^4(\MM)$. With this definition, we can thus define the expectation. For example, if $f:\iota(\MM)\rightarrow \RR$ is an integrable function, we have
\begin{align}
 \EE f(Y)=&\int_{\Omega}f(Y(\omega))\ud P(\omega)=\int_{\iota(\MM)} f(x)\ud \tilde{P}_{Y}(x)\nonumber\\ 
=& \int_{\iota(\MM)} f(x)\sfp(\iota^{-1}(x)) \iota_*\ud V(x)
=\int_{\MM} f(\iota(x)) \sfp(x) \ud V(\iota(x)),\nonumber
\end{align}
where the second equality follows from the fact that $\tilde{P}_Y$ is the induced probability measure, and the last one comes from the change of variable. To simplify the notation, hereafter we will not distinguish between $x$ and $\iota(x)$ and $\MM$ and $\iota(\MM)$ when there is no danger of ambiguity.

Suppose the data points $\mathcal{X}:=\{x_1,x_2,\ldots,x_n\}\subset\RR^p$ are identically and independently (i.i.d.) sampled from $Y$. For each $x_i$ we pick $u_i\in P(\MM,G)$ so that $\pi(u_i)=x_i$. 
To simplify the notation, we denote $u_i:= u_{x_i}$ when $x_i\in\mathcal{X}$ and $\myP^i_{j}:=\myP^{x_i}_{x_j}$ when $x_i,x_j\in\mathcal{X}$.
Denote the $nq$ dimensional Euclidean vector spaces $V_{\mathcal{X}}:=\oplus_{x_i\in\mathcal{X}}\RR^{q}$ and $E_{\mathcal{X}}:=\oplus_{x_i\in\mathcal{X}}E_{x_i}$, which represents the discretized vector bundle. Note that $V_{\mathcal{X}}$ is isomorphic to $E_{\mathcal{X}}$ since $E_{x_i}$ is isomorphic to $\RR^{q}$. 
Given a $\vw\in E_{\mathcal{X}}$, we denote $\vw=[\vw[1],\ldots,\vw[n]]$ and $\vw[l]\in E_{x_l}$ to be the $l$-th component in the direct sum for all $l=1,\ldots,n$. 

We need a map to realize the isomorphism between $V_{\mathcal{X}}$ and $E_{\mathcal{X}}$.
Define operators $B_{\mathcal{X}}: V_{\mathcal{X}}\to E_{\mathcal{X}}$ and $B^T_{\mathcal{X}}: E_{\mathcal{X}}\to V_{\mathcal{X}} $  by
\begin{align}
&B_{\mathcal{X}}\vv:=[ u_1\vv[1] ,\ldots,  u_n\vv[n] ] \in E_{\mathcal{X}},\label{definition:B_X}\\
&B_{\mathcal{X}}^T\vw:=[u^{-1}_1\vw[1],\ldots u^{-1}_n\vw[n]]\in V_{\mathcal{X}},\nonumber
\end{align}
where $\vw\in E_{\mathcal{X}}$ and $\vv\in V_{\mathcal{X}}$. Note that $B^T_{\mathcal{X}}B_{\mathcal{X}}\vv=\vv$ for all $\vv\in V_{\mathcal{X}}$.
And we define $\delta_{\mathcal{X}}:X\in C(\mathcal{E})\to E_{\mathcal{X}}$ by
\begin{align}
&\delta_{\mathcal{X}}X:=[X({x_1}),\ldots X({x_n})]\in E_{\mathcal{X}}.\nonumber
\end{align}
Here $\delta_{\mathcal{X}}$ is interpreted as the operator finitely sampling the section $X$ and $B_{\mathcal{X}}$ the discretization of the action of a section from $\MM\to P(\MM,G)$ on $\RR^{q}$. Note that under the tangent bundle setup, the operator $B^T_{\mathcal{X}}$ can be understood as finding the coordinates of $\vw[i]$ associated with $u_i$; $B_{\mathcal{X}}$ can be understood as recovering the point on $E_{x_i}$ from the coordinate $\vv[i]$ with related to  $u_i$. We can thus define
\begin{align}
\vX :=B^T_{\mathcal{X}}\delta_{\mathcal{X}}X\in V_{\mathcal{X}},\label{definition:vX}
\end{align}
which is the coordinate of the discretized section $X$ associated with the samples on the principal bundle if we are considering the tangent bundle setup.

We follow the standard notation defined in \cite{vanderVaart_Wellner:1996}. 
\begin{definition}
Take a probability space $(\Omega,\mathcal{F},P)$. For a pair of measurable functions $l:\Omega\to \RR$ and $u:\Omega\to \RR$, a bracket $[l,u]$ is the set of all measurable functions $f:\Omega\to \RR$ with $l\leq f\leq u$. An $\epsilon$-bracket in $L_1(P)$, where $\epsilon>0$, is a bracket $[l,u]$ with $\int |u(y)-l(y)| \ud P(y)\leq \epsilon$. Given a class of measurable function $\mathfrak{F}$, the bracketing number $N_{[]}(\epsilon,\mathfrak{F},L_1(P))$ is the minimum number of $\epsilon$-brackets needed to cover $\mathfrak{F}$.
\end{definition}
Define the empirical measure from the i.i.d. samples $\mathcal{X}$:
$$
\mathbb{P}_n:=\frac{1}{n}\sum_{i=1}^n\delta_{x_i}.
$$
For a given measurable vector-valued function $F:\MM\to \RR^m$ for $m\in\NN$, define
$$
\mathbb{P}_nF:=\frac{1}{n}\sum_{i=1}^n F(x_i)\quad\mbox{and}\quad\bP F:=\int_\MM F(x)\sfp(x)\ud V(x).
$$ 
\begin{definition}
Take a sequence of i.i.d. samples $\mathcal{X}:=\{x_1,\ldots,x_n\}\subset\MM$ according to the p.d.f. $p$. We call a class $\mathfrak{F}$ of measurable functions a Glivenko-Cantelli class if 
\begin{enumerate}
\item $\bP f$ exists for all $f\in \mathfrak{F}$
\item $\sup_{f\in \mathfrak{F}}| \mathbb{P}_nf-\bP f|\to 0$
almost surely when $n\to\infty$.
\end{enumerate} 
\end{definition}

Next, we introduce the following notations regarding the kernel used throughout the paper. Fix $h>0$. Given a non-negative continuous kernel function $K:[0,\infty)\to\RR_+$ decaying fast enough characterizing the affinity between two sampling points $x\in\MM$ and $y\in\MM$, we denote
\begin{align}
K_h(x,y)&:=\,K\Big(\frac{\|x-y\|_{\RR^p}}{\sqrt{h}}\Big)\in C(\MM\times\MM)\label{notation:Kh_def}.
\end{align}
where $x,y\in\MM$. For $0\leq \alpha\leq 1$, we define the following functions
\begin{equation}\label{definition:continuousK}\begin{split}
p_h(x) &:=\,\int K_{h}(x,y)\sfp(y)\ud V(y)\in C(\MM),\quad K_{h,\alpha}(x,y) :=\,\frac{K_{h}(x,y)}{p^\alpha_h(x) p^\alpha_h(y)}\in C(\MM\times\MM),\\
d_{h,\alpha}(x) &:=\,\int K_{h,\alpha}(x,y)\sfp(y)\ud V(y)\in C(\MM),\quad  
M_{h,\alpha}(x,y):=\,\frac{K_{h,\alpha}(x,y)}{d_{h,\alpha}(x)}\in C(\MM\times\MM),  
\end{split}\end{equation}
where $p_h(x)$ is an estimation of the p.d.f. at $x$  by the approximation of identify. 
Here, the practical meaning of $K_{h,\alpha}(x,y)$ is a new kernel function at $(x,y)$ adjusted by the estimated p.d.f. at $x$ and $y$; that is, the kernel is normalized to reduce the influence of the non-uniform p.d.f.. In practice, when we have only finite samples, we approximate the above terms by the following estimators :
\begin{equation}\label{definition:finiteK}\begin{split}
\widehat{p}_{h,n}(x)&:=\,\frac{1}{n}\sum_{k=1}^n K_{h}(x,x_k)\in C(\MM),\quad \widehat{K}_{h,\alpha,n}(x,y) :=\frac{K_{h}(x,y)}{\widehat{p}^\alpha_{h,n}(x) \widehat{p}^\alpha_{h,n}(y)}\in C(\MM\times\MM), \\
\widehat{d}_{h,\alpha,n}(x)&:=\,\frac{1}{n}\sum_{k=1}^n \widehat{K}_{h,\alpha,n}(x,x_k)\in C(\MM),\quad \widehat{M}_{h,\alpha,n}(x,y) :=\,\frac{\widehat{K}_{h,\alpha,n}(x,y)}{\widehat{d}_{h,\alpha,n}(x)}\in C(\MM\times\MM). 
\end{split}\end{equation}
Note that $\widehat{d}_{h,\alpha,n}$ is always positive if $K$ is positive.

\section{Unifying VDM, ODM, LE and DM from the Principal Bundle viewpoint}\label{subsection:tangent_bundle}
Before unifying these algorithms, we state some of the known results relevant to VDM, ODM, LE and DM. Most of the results that have been obtained are of two types: either they provide the topological information about the data which is global in nature, or they concern the geometric information which aims to recover the local information of the data. Fix the undirected affinity graph $\vG=(\vV,\vE)$. When it is built from a point cloud randomly sampled from a Riemannian manifold $\iota:\MM\hookrightarrow\RR^p$ with the induced metric $g$ from the canonical metric of the ambient space, the main ingredient of LE and DM is the Laplace-Beltrami operator $\Delta_g$ of $(\MM,g)$ \cite{coifman_lafon:2006}. It is well known that the Laplace-Beltrami operator $\Delta_g$ provides some topology and geometry information about $\MM$ \cite{gilkey:1974}. For example, the dimension of the null space of $\Delta_g$ is the number of connected components of $\MM$; the spectral embedding of $\MM$ into the Hilbert space \cite{berard_besson_gallot:1994} preserves the geometric information of $\MM$. We can actually study LE and DM in the principal bundle framework. Indeed, $\Delta_g$ is associated with the trivial bundle $\mathcal{E}(P(\MM,\{e\}),\rho,\RR)$, where $\rho$ is the trivial representation of $\{e\}$ on $\RR$. If we consider a non-trivial bundle, we obtain different Laplacian operators, which provide different geometric/topological information \cite{gilkey:1974}. For example, the core of VDM in \cite{singer_wu:2012} is the connection Laplacian associated with the tangent bundle $T\MM$, which provides not only the geodesic distance among nearby points (local information) but also the $1$-Betti number mixed with the Ricci curvature of the manifold. In addition, the notion of synchronization of vector fields on $\vG$ accompanied with translation group can be analyzed by the graph connection Laplacian \cite{bandeira_singer_spielman:2013}.

\subsection{Principal Bundle Setup}
As the reader may have noticed, the appearance of VDM is similar to that of LE, DM and ODM. This is not a coincidence if we take the notion of principal bundle and its connection into account. Based on this observation, we are able unify VDM, ODM, LE and DM in this section. 

We make the following assumptions about the manifold setup.
\begin{assumption}\label{Assumption:A}
\begin{enumerate}
\item[(A1)] The manifold $\MM$ is $d$-dim, smooth and smoothly embedded in $\RR^p$ via $\iota$ with the metric $g$ induced from the canonical metric of $\RR^p$. If the manifold is not closed, we assume that the boundary is smooth.
\item[(A2)] Fix a principal bundle $P(\MM,G)$ with a connection $1$-form $\omega$. Denote $\rho$ to be the representation of $G$ into $O(q)$, where $q>0$ depending on the application.\footnote{We restrict ourselves to the orthogonal representation in order to obtain a symmetric matrix in the VDM algorithm. Indeed, if the translation of the vector status from $x_i$ to $x_j$ satisfies $u_j^{-1}u_i$, where $u_i,u_j\in P(\MM,G)$ and $\pi(u_i)=x_i$ and $\pi(u_j)=x_j$, the translation from $x_j$ back to $x_i$ should satisfy $u_i^{-1}u_j$, which is the inverse of $u_j^{-1}u_i$. To have a symmetric matrix in the end, we thus need $u_j^{-1}u_i=(u_i^{-1}u_j)^T$, which is satisfied only when $G$ is represented into the orthogonal group. We refer the reader to Appendix \ref{section:appendix:principal_bundle} for further details based on the notion of connection.}   
Denote $\mathcal{E}:=\mathcal{E}(P(\MM,G),\rho,\RR^{q})$ to be the associated vector bundle with a fiber metric $g^{\mathcal{E}}$ and the metric connection $\nabla^{\mathcal{E}}$.\footnote{In general, $\rho$ can be the representation of $G$ into $O(q)$ which acts on the tensor space $T^r_s(\RR^{q})$ of type $(r,s)$ or others. But we consider $\RR^{q}=T^1_0(\RR^{q})$ to simplify the discussion.}
\end{enumerate}
\end{assumption} 
The following two special principal bundles and their associated vector bundles are directly related to ODM, LE and DM. The principal bundle for ODM is the non-trivial orientation bundle associated with the tangent bundle of a manifold $\MM$, denoted as $P(\MM,\ZZ_2)$, where $\ZZ_2=\{-1,1\}$. The construction of $P(\MM,\ZZ_2)$ is shown in Example \ref{Appendix:Example:Z2bundleConstruction}. Since $\ZZ_2$ is a discrete group, we take the connection as an assignment of the horizontal subspace of $TP(\MM,\ZZ_2)$ as the simply the tangent space of $P(\MM,\ZZ_2)$; that is, $TP(\MM,\ZZ_2)$. Its associated vector bundle is $\mathcal{E}^{\textup{ODM}}=\mathcal{E}(P(\MM,\ZZ_2),\rho,\RR)$, where $\rho$ is the representation of $\ZZ_2$ so that $\rho$ satisfies $\rho(g)x=gx$ for all $g\in \ZZ_2$ and $x\in\RR$. Note that $\ZZ_2\cong O(1)$. 
The principal bundle for LE and DM is $P(\MM,\{e\})$, where $\{e\}$ is the identify group. Its construction can be found in Example \ref{Appendix:Example:trivialbundleConstruction} and we focus on the trivial connection. Its associated vector bundle is $\mathcal{E}^{\textup{DM}}=\mathcal{E}(P(\MM,\{e\}),\rho,\RR)$, where the representation $\rho$ satisfies $\rho(e)x=x$ and $x\in\RR$. In other words, $\mathcal{E}^{\textup{DM}}$ is the trivial line bundle on $\MM$. Note that $\{e\}\cong SO(1)$.

Under the manifold setup assumption, we sample data from a random vector $Y$ satisfying:  
\begin{assumption}\label{Assumption:B}
\begin{enumerate}
\item[(B1)] The random vector $Y$ has the range $\iota(\MM)$ satisfying Assumption \ref{Assumption:A}. The probability density function $\sfp\in C^4(\MM)$ of $Y$ is uniformly bounded from below and above; that is, $0<p_m\leq \sfp(x)\leq p_M<\infty$ for all $x\in\MM$.
\item[(B2)] The sample points $\mathcal{X} = \{x_i\}_{i=1}^n \subset M$ are sampled independently from $Y$. 
\item[(B3)]  For each $x_i\in \mathcal{X}$, pick $u_i\in P(\MM,G)$ such that $\pi(u_i)=x_i$. Denote $\mathcal{G}=\{u_i:\RR^{q}\to E_{x_i}\}_{i=1}^{n}$. 
\end{enumerate}
\end{assumption} 
The kernel and bandwidth used in the following sections satisfy:
\begin{assumption}\label{Assumption:K}
\begin{enumerate}
\item[(K1)] The kernel function $K\in C^2(\RR_+)$ is a positive function satisfying that $K$ and $K'$ decay exponentially fast. 
Denote $\mu^{(k)}_{r,l}:=\int_{\mathbb{R}^d}\|x\|^l \partial^k (K^r)(\|x\|)\ud x<\infty$, where $k=0,1,2$, $l=0,1,2,3$, $r=1,2$ and $\partial^k$ denotes the $k$-th order derivative. We assume $\mu^{(0)}_{1,0}=1$.
\item[(K2)] The bandwidth of the kernel, $h$, satisfies $0<\sqrt{h}< \min\{\tau,\text{inj}(\MM)\}$.
\end{enumerate}
\end{assumption}

\subsection{Unifying VDM, ODM, LE and DM under the manifold setup}

Suppose Assumption \ref{Assumption:A} is satisfied and we are given $\mathcal{X}$ and $\mathcal{G}$ satisfying Assumption \ref{Assumption:B}.
The affinity graph $\vG=(\vV,\vE)$ is constructed in the following way. Take $\vV=\mathcal{X}$ and $\vE=\{(x_i,x_j)|\, x_i,x_j\in\mathcal{X} \}$. Under this construction $\vG$ is undirected and complete. The affinity between $x_i$ and $x_j$ is defined by 
$$
w_{ij}:=\widehat{K}_{h,\alpha,n}(x_i,x_j),
$$ 
where $0\leq \alpha\leq 1$, $K$ is the kernel function satisfying Assumption \ref{Assumption:K} and $\widehat{K}_{h,\alpha,n}(x_i,x_j)$ is defined in (\ref{definition:finiteK}); that is, we define an affinity function $w:\vE\to \RR_+$. The connection group $g_{ij}$ between $x_i$ and $x_j$ is constructed from $\mathcal{G}$ by
\begin{equation}\label{VDM:gij}
g_{ij} := u_i^{-1}\myP_{j}^iu_j,
\end{equation}
which form a group-valued function $g:\vE\to O(q)$. We call $(\vG,w,g)$ a {\it connection graph}.
With the connection graph, the GCL can be implemented. Define the following $n\times n$ block matrix $\vP_{h,\alpha,n}$ with $q\times q$ block entries:
\begin{equation}\label{def:Sbundle}
\vP_{h,\alpha,n}(i,j) = \left\{\begin{array}{lcl}
                  w_{ij}g_{ij} &  & (i,j)\in E, \\
                  0_{q\times q} &  & (i,j)\notin E.
                \end{array}
 \right.
\end{equation}
Notice that the square matrix $\vP_{h,\alpha,n}$ is symmetric since $w_{ij}=w_{ji}$ and $g_{ij}=g_{ji}^T$. Then define a $n\times n$ diagonal block matrix $\vD_n$ with $q\times q$ entries, where the diagonal blocks are scalar multiples of the identity matrices given by
\begin{equation}\label{def:Dbundle_all}
\vD_{h,\alpha,n}(i,i) = \sum_{j:(i,j)\in E} w_{ij}\vI_{q}=\widehat{d}_{h,\alpha,n}(x)\vI_{q}.
\end{equation}
Take $\boldsymbol{v}\in \RR^{nq}$. The matrix $\vD_{h,\alpha,n}^{-1}\vP_{h,\alpha,n}$ is thus an operator acting on $\boldsymbol{v}$ by
\begin{align*} 
(\vD_{h,\alpha,n}^{-1}\vP_{h,\alpha,n}\boldsymbol{v})[i] = \frac{\sum_{j=1 }^n \widehat{K}_{h,\alpha,n}(x_i,x_j)g_{ij}\vv[j]}{\sum_{j=1 }^n \widehat{K}_{h,\alpha,n}(x_i,x_j)}=\frac{1}{n}\sum_{j=1}^n \widehat{M}_{h,\alpha,n}(x_i,x_j)g_{ij}\vv[j],
\end{align*}
where $\widehat{M}_{h,\alpha,n}(x_i,x_j)$ is defined in (\ref{definition:finiteK}).

Recall the notation $\vX:=B^T_{\mathcal{X}}\delta_{\mathcal{X}}X$ defined in (\ref{definition:vX}). Then, consider the following quantity:
\begin{align}
\Big(\frac{\vD_{h,\alpha,n}^{-1}\vP_{h,\alpha,n}-\vI_n}{h}\vX\Big)[i] =& \frac{1}{n}\sum_{j=1}^n \widehat{M}_{h,\alpha,n}(x_i,x_j)\frac{1}{h}(g_{ij}\vX[j]-\vX[i])\nonumber\\
= &\frac{1}{n}\sum_{j=1}^n \widehat{M}_{h,\alpha,n}(x_i,x_j)\frac{1}{h}(u_i^{-1}\myP_j^iX(x_j)-u_i^{-1}X(x_i)).\label{VDM:comparison}
\end{align}
Note that geometrically $g_{ij}$ is closely related to the parallel transport  (\ref{definition:parallel:frameX}) from $x_j$ to $x_i$. 
Indeed, rewrite the definition of the covariant derivative in (\ref{definition:parallel:frameX}) by
\begin{equation*}
\nabla_{\dot{c}(0)} X(x_i)=\lim_{h\to 0}\frac{1}{h}[u(0)u(h)^{-1}X(c(h))-X(c(0))].
\end{equation*}
where $c:[0,1]\to \MM$ is the geodesic on $\MM$ so that $c(0)=x_i$ and $c(h)=x_j$ and $u(h)$ is the horizontal lift of $c$ to $P(\MM,G)$ so that $\pi(u(0))=x_i$.
Next rewrite
\begin{equation}\label{VDM:cov_deri2}
u(0)^{-1}\nabla_{\dot{c}(0)} X =\lim_{h\to 0}\frac{1}{h}\big\{u(h)^{-1}X(c(h))-u(0)^{-1}X(c(0))\big\},
\end{equation}
where the right hand side is exactly the term appearing in (\ref{VDM:comparison}) by the definition of parallel transport since $u(h)^{-1}=u(0)^{-1}\myP_{c(h)}^{c(0)}$. As will be shown explicitly in the next section, the GCL reveals the information about the manifold by accumulating the local information via taking the covariant derivative into account.

Now we unify ODM, LE and DM. 
For ODM, we consider the orientation principal bundle $P(\MM,\ZZ_2)$ and its associated vector bundle $\mathcal{E}^{\textup{ODM}}$. In this case, $\mathcal{G}$ is $\{u_i^{\textup{ODM}}\}_{i=1}^n$, $u^{\textup{ODM}}_i\in P(\MM,\ZZ_2)$ and $u^{\textup{ODM}}_i:\RR\to E_i$, where $E_i$ is the fiber of $\mathcal{E}^{\textup{ODM}}$ at $x_i\in \MM$. Note that the fiber of $\mathcal{E}^{\textup{ODM}}$ is isomorphic to $\RR$. The connection group $g^{\textup{ODM}}_{ij}$ between $x_i$ and $x_j$ is constructed by
\begin{equation*}
g^{\textup{ODM}}_{ij} = {u^{\textup{ODM}}_i}^{-1} u^{\textup{ODM}}_j.
\end{equation*}
In practice, $u^{\textup{ODM}}_j$ comes from the orientation of the sample from the frame bundle. Indeed, given $x_i$ and $u_i\in O(\MM)$ so that $\pi(u_i)=x_i$, $g^{\textup{ODM}}_{ij}$ is defined to be the orientation of $u_i^{-1}\myP_{j}^iu_j$; that is, the determinant of $u_i^{-1}\myP_{j}^iu_j\in O(d)$. Define a $n\times n$ matrix with scalar entries $\vP^{\textup{ODM}}_{h,\alpha,n}$, where
\begin{equation*}
\vP^{\textup{ODM}}_{h,\alpha,n}(i,j) = \left\{\begin{array}{lcl}
                  w_{ij}g^{\textup{ODM}}_{ij}  &  & (i,j)\in \vE, \\
                  0 &  & (i,j)\notin \vE 
                \end{array}
 \right.
\end{equation*}
and a $n\times n$ diagonal matrix $\vD^{\textup{ODM}}_{h,\alpha,n}$, where
\begin{equation*}
\vD^{\textup{ODM}}_{h,\alpha,n}(i,i) = d_i. 
\end{equation*}
It has been shown in \cite[Section 2.3]{singer_wu:2011} that the orientability information of $\MM$ can be obtained from analyzing ${\vD^{\textup{ODM}}_{h,1,n}}^{-1}\vP^{\textup{ODM}}_{h,1,n}$. When the manifold is orientable, we get the orientable diffusion maps (ODM) by taking the higher eigenvectors of ${\vD^{\textup{ODM}}_{h,1,n}}^{-1}\vP^{\textup{ODM}}_{h,1,n}$ into account; when the manifold is non-orientable, we can recover the orientable double covering of the manifold by the {\it modified diffusion maps} \cite[Section 3]{singer_wu:2011}. In \cite[Section 3]{singer_wu:2011}, it is conjectured that any smooth, closed non-orientable manifold $(\MM,g)$ has an orientable double covering embedded symmetrically inside $\RR^p$ for some $p\in\NN$. To make the unification self-contained, we will show in Appendix \ref{appendix_proof_symmetric_embedding} that this conjecture is true by modifying the proof of the Nash embedding theorem  \cite{Nash1954,Nash1956}. This fact provides us a better visualization of reconstructing the orientable double covering by the modified diffusion maps.

For LE and DM, we consider the trivial principal bundle $P(\MM,\{e\})$ and its associated trivial line bundle $\mathcal{E}^{\textup{DM}}$. 
In this case, $q=1$. Define a $n\times n$ matrix with scalar entries $\vP^{\textup{DM}}_{h,\alpha,n}$:
\begin{equation*}
\vP^{\textup{DM}}_{h,\alpha,n}(i,j) = \left\{\begin{array}{lcl}
                  w_{ij} &  & (i,j)\in \vE, \\
                  0 &  & (i,j)\notin \vE.
                \end{array}
 \right.
\end{equation*}
and a $n\times n$ diagonal matrix $\vD^{\textup{DM}}_{h,\alpha,n}$: 
\begin{equation*}
\vD^{\textup{DM}}_{h,\alpha,n}(i,i) = d_i. 
\end{equation*}
Note that this is equivalent to ignoring the connection group in each edge in GCL. Indeed, when we study DM, we do not need the notion of connection group. This actually comes from the fact that functions defined on the manifold are actually sections of the trivial line bundle of $\MM$ -- since the fiber $\RR$ and $\MM$ are decoupled, we can directly take the algebraic relationship of $\RR$ into consideration, so that it is not necessary to mention the bundle structure. With the well-known normalized graph Laplacian, $\vI_n-{\vD^{\textup{DM}}_{h,0,n}}^{-1}\vP^{\textup{DM}}_{h,0,n}$, we can apply DM or LE for dimension reduction, spectral clustering, reparametrization, etc.

To sum up, we are able to unify the VDM, ODM, LE and DM by considering the principal bundle structure. In the next sections, we focus on the pointwise and spectral convergence of the corresponding operators. 

\section{Pointwise and Spectral Convergence of GCL}\label{proof:PointwiseSpectralConvergence}
With the above setup, we now do the asymptotic analysis under Assumption \ref{Assumption:A}, Assumption \ref{Assumption:B} and Assumption \ref{Assumption:K}. 
Throughout the proof, since $\sfp$, $\MM$ and $\iota$ are fixed, and $\sfp\in C^4$, $\MM$, $\partial\MM$ and $\iota$ are smooth and $\MM$ is compact, we know that $\|\sfp^{(l)}\|_{L^\infty}$, $l=0,1,2,3,4$, the volume of $\partial\MM$, the curvature of $\MM$ and $\partial\MM$ and the second fundamental form of the embedding $\iota$, as well as their first few covariant derivatives are bounded independent of $h$ and $n$. Thus, we would ignore them in the error term. However, when the error term depends on a given section (or function), it will be precisely stated.

The pointwise convergence of the normalized GL can be found in \cite{belkin_niyogi:2005,coifman_lafon:2006,hein_audibert_luxburg:2005,Gine_Koltchinskii:2006}, and the spectral convergence of the normalized GL when the sampling is uniform and the boundary is empty can be found in \cite{belkin_niyogi:2007}. Here we take care of simultaneously the boundary, the nonuniform sampling and the bundle structure. Note that the asymptotical analysis of the normalized GL is a special case of the analysis in this paper since it is unified to the current framework based on the trivial principal bundle $P(\MM,\{e\})$ and its associated trivial line bundle $\mathcal{E}^{\textup{DM}}$. From a high level, except taking the possibly non-trivial bundle structure into account, the analysis is standard. 

\subsection{Pointwise Convergence}\label{section:pointwise_conv}

\begin{definition} \label{step1:defn1}
Define operators $T_{h,\alpha}:C(\mathcal{E})\to C(\mathcal{E})$ and $\widehat{T}_{h,\alpha,n}:C(\mathcal{E})\to C(\mathcal{E})$ as
\begin{align}
T_{h,\alpha}X(y) =\,\int_\MM M_{h,\alpha}\left(y,x\right)\myP^{y}_{x}X(x)\sfp(x)\ud V(x),\quad
\widehat{T}_{h,\alpha,n}X(y) =\,\frac{1}{n}\sum_{j=1}^n \widehat{M}_{h,\alpha,n}\left(y,x_j\right)\myP^{y}_{x_j}X(x_j),\nonumber
\end{align}
where $X\in C(\mathcal{E})$, $0\leq \alpha\leq 1$ and $M_{h,\alpha}$ and $\widehat{M}_{h,\alpha,n}$ are defined in (\ref{definition:continuousK}) and (\ref{definition:finiteK}) respectively.
\end{definition}
First, we have the following theorem stating that the integral operator $T_{h,\alpha}$ is an approximation of identity which allows us to obtain the connection Laplacian:

\begin{theorem}\label{thm:pointwise_conv:approx_of_identity}
Suppose Assumption \ref{Assumption:A} and Assumption \ref{Assumption:K} hold. Take $0<\gamma<1/2$. When $0\leq \alpha\leq 1$, for all $x_i\notin \MM_{h^\gamma}$ and $X\in C^4(\mathcal{E})$ we have
\begin{align*}
\begin{split}
(T_{h,\alpha}X-X) (x)=h\frac{\mu^{(0)}_{1,2}}{2d}\left(\nabla^2X(x)+\frac{2\nabla X(x)\cdot\nabla(\sfp^{1-\alpha})(x)}{\sfp^{1-\alpha}(x)}\right)+O(h^2),
\end{split}
\end{align*}
where $O(h^2)$ depends on $\|X^{(\ell)}\|_{L^\infty}$, where $\ell=0,1\ldots,4$;
when $x_i\in\MM_{h^{\gamma}}$, we have
\begin{equation}\label{bdryrslt3}
(T_{h,\alpha}X -X)(x) =\sqrt{h}\frac{m_{h,1}}{m_{h,0}}\myP^{x}_{x_0}\nabla_{\partial_d}X(x_0)+O(h^{2\gamma}),
\end{equation}
where $O(h^{2\gamma})$ depends on $\|X^{(\ell)}\|_{L^\infty}$, where $\ell=0,1,2$, $x_0=\argmin_{y\in\partial\MM}d(x_i,y)$, $m_{h,1}$ and $m_{h,0}$ are constants defined in (\ref{meps0}), and $\partial_d$ is the normal direction to the boundary at $x_0$. 
\end{theorem}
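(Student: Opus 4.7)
The plan is to reduce both statements to explicit Euclidean integrals on $\RR^d$ via normal (or Fermi) coordinates and then to do a careful Taylor expansion in $h$. First I would localize to a geodesic ball around $x$ of radius $\sqrt{h}\,|\log h|$: outside this ball the exponential decay of $K$ and $K'$ from (K1) makes the contribution smaller than any power of $h$. Parametrizing $y=\exp_x v$ with $v\in T_x\MM\cong\RR^d$, I would use the embedding estimate
\[
\|\iota(y)-\iota(x)\|^2_{\RR^p}=\|v\|^2-\tfrac{1}{12}\|\textup{II}(v,v)\|^2+O(\|v\|^5),
\]
and the volume-form expansion $\ud V(y)=(1-\tfrac{1}{6}\Ric(v,v)+O(\|v\|^3))\,\ud v$, and choose a parallel frame of $\mathcal{E}$ along radial geodesics from $x$ so that $\myP^x_y$ becomes the identity in that frame. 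This converts each of $T_{h,\alpha}X(x)$ and the scalar normalizations into rescaled integrals of the form $\int_{\RR^d}K(\|v\|/\sqrt{h})F(v)\,\ud v$, whose asymptotics in $h$ are governed by the moments $\mu^{(k)}_{r,l}$ of $K$ from (K1).

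Next, a scalar approximation-of-identity argument (the $q=1$ case of the present theorem) yields $p_h(x)=\sfp(x)+O(h)$ and $d_{h,\alpha}(x)=C\sfp^{1-\alpha}(x)+O(h)$ for an explicit constant $C$ depending on $K$ and $d$. Raising to the $\alpha$-th power gives $p_h^\alpha(y)=\sfp^\alpha(y)(1+O(h))$, so
\[
M_{h,\alpha}(x,y)\sfp(y)\,\ud V(y)=\frac{K_h(x,y)\sfp^{1-\alpha}(y)\,\ud V(y)}{\sfp^\alpha(x)\,d_{h,\alpha}(x)}+O(h).
\]
The effective density driving the $\alpha$-adjusted kernel is therefore $\tilde{\sfp}:=\sfp^{1-\alpha}$, which will produce the drift $\nabla(\sfp^{1-\alpha})/\sfp^{1-\alpha}$ in the final answer.

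For the interior case $x\notin\MM_{h^\gamma}$, Taylor expand the section in the parallel frame:
\[
(\myP^x_y X)(y)=X(x)+v^i\nabla_iX(x)+\tfrac{1}{2} v^iv^j\nabla^2_{i,j}X(x)+R(v),
\]
with $|R(v)|\lesssim \|v\|^3\|X^{(3)}\|_{L^\infty}$, upgraded to $\|X^{(4)}\|_{L^\infty}$ once the embedding and density corrections are folded in. Integrating against $K(\|v\|/\sqrt{h})\sfp^{1-\alpha}(y)$: the constant term gives $X(x)$ after normalization; the linear-in-$v$ term is odd and vanishes on its own by isotropy of $K$, but couples to the linear expansion of $\sfp^{1-\alpha}(y)$ to produce the even drift contribution $2\nabla X\cdot \nabla(\sfp^{1-\alpha})/\sfp^{1-\alpha}$; the quadratic-in-$v$ term contracts with the Euclidean identity (again by isotropy) to give the trace $\nabla^2 X(x)$ with prefactor $\mu^{(0)}_{1,2}/d$. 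Dividing by $h$ and collecting terms gives the first assertion, with the $O(h^2)$ remainder depending on $\|X^{(\ell)}\|_{L^\infty}$ for $\ell\leq 4$.

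For the boundary case $x\in\MM_{h^\gamma}$, I would use Fermi coordinates $(v',v^d)\in\RR^{d-1}\times\RR_{\geq -s}$ based at the nearest boundary point $x_0$, where $s=d(x,x_0)\leq h^\gamma$; after truncation the integration domain becomes the half-space $\{v^d>-s\}$. Expanding $\myP^x_y X(y)$ to first order around $x_0$ (after applying $\myP^x_{x_0}$), the asymmetric domain no longer kills the odd $v^d$-moment; rescaling $v=\sqrt{h}\,w$ and using the half-space moments $m_{h,0}$ and $m_{h,1}$ gives precisely $\sqrt{h}\,(m_{h,1}/m_{h,0})\,\myP^x_{x_0}\nabla_{\partial_d}X(x_0)$ as leading term. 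The remainder $O(h^{2\gamma})$ comes from the next-order Taylor correction scaling like $\sqrt{h}\cdot s\leq h^{\gamma+1/2}\leq h^{2\gamma}$ since $\gamma<1/2$. The main obstacle is the simultaneous bookkeeping of four perturbations: the deviation of $\|\iota(y)-\iota(x)\|_{\RR^p}$ from the geodesic distance, the $\alpha$-normalization by $p_h^\alpha$ at both endpoints, the parallel-frame Taylor expansion with bundle curvature corrections, and the half-space asymmetry at the boundary. Extracting the exact drift coefficient $2\nabla(\sfp^{1-\alpha})/\sfp^{1-\alpha}$ and verifying that cancellations push the interior remainder down to $O(h^2)$ rather than $O(h^{3/2})$ is the bundle-valued analogue of the Coifman--Lafon $\alpha$-normalization calculation, and is where care is most needed.
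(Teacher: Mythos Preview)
Your proposal is correct and follows essentially the same route as the paper: localize by the decay of $K$, pass to normal coordinates (the paper uses polar form $(t,\theta)$ rather than Cartesian $v$, and for the boundary centers at $x$ with a slice-symmetrization lemma rather than Fermi coordinates at $x_0$, but these are cosmetic), Taylor-expand the section in a radially parallel frame together with $\sfp^{1-\alpha}$ and the volume form, and read off the second-moment terms. You have also correctly flagged the one genuinely delicate point---that the $O(h)$ correction in $p_h^{\alpha}(y)$ must be kept as an explicit smooth function of $y$ so that it cancels between numerator and denominator of $T_{h,\alpha}$, which is exactly how the paper pushes the interior remainder to $O(h^2)$.
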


Second, we show that when $n\to \infty$, asymptotically the matrix $\vD_{h,\alpha,n}^{-1}\vP_{h,\alpha,n}-\vI$ behaves like the integral operator $T_{h,\alpha}-1 $. The main component in this asymptotical analysis in the stochastic fluctuation analysis of the GCL.  
As is shown in Theorem \ref{thm:pointwise_conv:approx_of_identity}, the term we have interest in, the connection Laplacian (or Laplace-Beltrami operator when we consider GL), is of order $O(h)$. Therefore the stochastic fluctuation incurred by the finite sampling points should be controlled to be much smaller than $O(h)$ otherwise we are not able to recover the object of interest. 

\begin{theorem}\label{thm:pointwise_conv_to_integral}
Suppose Assumption \ref{Assumption:A}, Assumption \ref{Assumption:B} and Assumption \ref{Assumption:K} hold and $X\in C(\mathcal{E})$. Take $0< \alpha\leq 1$. Suppose we focus on the situation that the stochastic fluctuation of $(\vD_{h,\alpha,n}^{-1}\vP_{h,\alpha,n}\vX-\vX)[i]$ is $o(h)$ for all $i$. Then, with probability higher than $1-O(1/n^2)$, for all $i=1,\ldots,n$,   
\begin{align}
(\vD_{h,\alpha,n}^{-1}\vP_{h,\alpha,n}\vX-\vX)[i] = u_i^{-1}(T_{h,\alpha}X -X)(x_i)+ O\left(\frac{\sqrt{\log(n)}}{n^{1/2}h^{d/4}}\right),\nonumber
\end{align}
where $\vX$ is defined in (\ref{definition:vX}). 

Take $\alpha=0$ and $1/4<\gamma<1/2$. Suppose we focus on the situation that the stochastic fluctuation of $(\vD_{h,0,n}^{-1}\vP_{h,0,n}\vX-\vX)[i]$ is $o(h)$ for all $i$. Then with probability higher than $1-O(1/n^2)$, for all $x_i\notin\MM_{h^\gamma}$ we have
\begin{align}
(\vD_{h,0,n}^{-1}\vP_{h,0,n}\vX-\vX)[i] = u_i^{-1}(T_{h,0}X -X)(x_i)+ O\left(\frac{\sqrt{\log(n)}}{n^{1/2}h^{d/4-1/2}}\right);\nonumber
\end{align}
if we focus on the situation that the stochastic fluctuation of $(\vD_{h,0,n}^{-1}\vP_{h,0,n}\vX-\vX)[i]$ is $o(h^{1/2})$ for all $i$, with probability higher than $1-O(1/n^2)$, for all $x_i\in\MM_{h^\gamma}$:
\begin{align}
(\vD_{h,0,n}^{-1}\vP_{h,0,n}\vX-\vX)[i] = u_i^{-1}(T_{h,0}X -X)(x_i)+ O\left(\frac{\sqrt{\log(n)}}{n^{1/2}h^{d/4-1/4}}\right).\nonumber
\end{align} 

\end{theorem}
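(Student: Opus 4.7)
The plan is to reduce the claim to a concentration statement for the empirical integral operator $\widehat{T}_{h,\alpha,n}$, and then bound the resulting stochastic fluctuation by a Bernstein-type inequality combined with a union bound over the sample points. First observe the purely algebraic identity: using $g_{ij}=u_i^{-1}\myP_j^iu_j$ and $\vX[j]=u_j^{-1}X(x_j)$ from (\ref{VDM:gij}) and (\ref{definition:vX}), one finds $g_{ij}\vX[j]=u_i^{-1}\myP_j^iX(x_j)$, so that
\[
(\vD_{h,\alpha,n}^{-1}\vP_{h,\alpha,n}\vX)[i]=u_i^{-1}\widehat{T}_{h,\alpha,n}X(x_i).
\]
Consequently, the theorem is equivalent to a uniform-in-$i$ concentration of $\widehat{T}_{h,\alpha,n}X(x_i)$ around $T_{h,\alpha}X(x_i)$ with probability at least $1-O(n^{-2})$.

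I would begin with the cleanest case $\alpha=0$ in the interior. Write $\widehat{T}_{h,0,n}X(x_i)=N_n(x_i)/D_n(x_i)$ with
\[
N_n(x_i):=\frac{1}{n}\sum_jK_h(x_i,x_j)\myP_j^iX(x_j),\qquad D_n(x_i):=\widehat{p}_{h,n}(x_i).
\]
Both are empirical means of bounded (by Assumption (K1)) i.i.d.\ random variables whose expectations are $\Theta(h^{d/2})$ and whose variances are $O(h^{d/2})$, the last estimate following from the fact that $K_h$ is essentially supported on a $\sqrt{h}$-ball. Bernstein's inequality then gives a per-point deviation of order $h^{d/4}\sqrt{\log n/n}$ with probability $1-O(n^{-3})$; dividing by $\EE D_n=\Theta(h^{d/2})$ and using a Taylor expansion of $N_n/D_n$ around $\EE N_n/\EE D_n=T_{h,0}X(x_i)$ yields a ratio fluctuation of order $\sqrt{\log n}/(n^{1/2}h^{d/4})$. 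The extra factor $h^{1/2}$ sharpening the interior bound to $\sqrt{\log n}/(n^{1/2}h^{d/4-1/2})$ comes from the cancellation
\[
N_n(x_i)-X(x_i)D_n(x_i)=\frac{1}{n}\sum_jK_h(x_i,x_j)\bigl(\myP_j^iX(x_j)-X(x_i)\bigr),
\]
whose summands are of size $O(\sqrt{h}\,\|X\|_{C^1})$ on the effective support of $K_h$, cutting the summand variance by a further factor of $h$.

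Near the boundary the principal term $T_{h,0}X-X$ is only $O(\sqrt{h})$ and the cancellation above delivers only half of the gain, yielding the weaker rate $\sqrt{\log n}/(n^{1/2}h^{d/4-1/4})$ under the correspondingly weaker hypothesis that the fluctuation be $o(\sqrt{h})$. For $\alpha>0$, the main obstacle is that the summands of $\widehat{T}_{h,\alpha,n}X(x_i)$ are no longer i.i.d.: the factor $\widehat{K}_{h,\alpha,n}(x_i,x_j)=K_h(x_i,x_j)/(\widehat{p}_{h,n}^\alpha(x_i)\widehat{p}_{h,n}^\alpha(x_j))$ depends on the whole sample through the $\widehat{p}_{h,n}$ normalisations. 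I would decouple this in two stages: first, a Bernstein-plus-union-bound argument applied to $\widehat{p}_{h,n}(x_k)$ gives a uniform approximation of $\widehat{p}_{h,n}$ by $p_h$ at rate $h^{d/4}\sqrt{\log n/n}$; then, replacing $\widehat{p}_{h,n}$ by $p_h$ inside $\widehat{K}_{h,\alpha,n}$ up to a controlled additive perturbation reduces matters to the i.i.d.\ analysis already treated. The additional $\widehat{p}_{h,n}$-fluctuation does not benefit from the cancellation that sharpened the $\alpha=0$ interior estimate, which explains the slightly coarser rate $\sqrt{\log n}/(n^{1/2}h^{d/4})$. A final union bound over $i=1,\ldots,n$ promotes the per-point probability $1-O(n^{-3})$ to the uniform $1-O(n^{-2})$ asserted.
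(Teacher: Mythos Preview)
Your proposal is correct and follows essentially the same route as the paper: the paper likewise rewrites the discrete operator via the identity $(\vD_{h,\alpha,n}^{-1}\vP_{h,\alpha,n}\vX)[i]=u_i^{-1}\widehat{T}_{h,\alpha,n}X(x_i)$, handles the $\alpha=0$ case by Bernstein's inequality applied to the centered summands $K_h(x_i,x_j)(\myP_j^iX(x_j)-X(x_i))$ (your cancellation trick) to gain the extra $h^{1/2}$ in the interior, and for $\alpha>0$ first controls $\widehat p_{h,n}-p_h$ uniformly and then decomposes the non-i.i.d.\ ratio into an i.i.d.\ piece with $p_h$ in place of $\widehat p_{h,n}$ plus two perturbation terms bounded by the $\widehat p_{h,n}$-fluctuation. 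The only cosmetic difference is that the paper writes out the three-term decomposition explicitly rather than phrasing it as a two-stage decoupling, and it packages the Bernstein argument into a separate preparatory lemma.
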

Here $\sqrt{\log(n)}$ in the error term shows up due to the union bound for all $i=1,\ldots,n$ and the probability bound we are seeking. When $\alpha\neq 0$, we need to estimate the p.d.f. from finite sampling points. This p.d.f. estimation slows down the convergence rate. The proofs of Theorem \ref{thm:pointwise_conv:approx_of_identity} and Theorem \ref{thm:pointwise_conv_to_integral} are postponed to the Appendix. These Theorems lead to the following pointwise convergence of the GCL. Here, the error term in Theorem \ref{thm:pointwise_conv_to_integral} is the stochastic fluctuation (variance) when the number of samples is finite, and the error term in Theorem \ref{thm:pointwise_conv:approx_of_identity} is the bias term introduced by the kernel approximation.
\begin{corollary}\label{thm:pointwise_conv}
Suppose Assumption \ref{Assumption:A}, Assumption \ref{Assumption:B} and Assumption \ref{Assumption:K} hold. Take $0<\gamma<1/2$ and $X\in C^4(\mathcal{E})$. When $0< \alpha\leq 1$, if we focus on the situation that the stochastic fluctuation of $(\vD_{h,\alpha,n}^{-1}\vP_{h,\alpha,n}\vX-\vX)[i]$ is $o(h)$ for all $i$, then with probability higher than $1-O(1/n^2)$, the following holds for all $x_i\notin \MM_{h^\gamma}$:
\begin{align}
 h^{-1}(\vD_{h,\alpha,n}^{-1}\vP_{h,\alpha,n}\vX-\vX)[i] = &\frac{\mu^{(0)}_{1,2}}{2d}u_i^{-1}\left\{\nabla^2X(x_i)+\frac{2\nabla X(x_i)\cdot\nabla(\sfp^{1-\alpha})(x_i)}{\sfp^{1-\alpha}(x_i)}\right\} + O(h) + O\left(\frac{\sqrt{\log(n)}}{n^{1/2}h^{d/4+1}}\right),\nonumber
\end{align}
where $\nabla X(x_i)\cdot\nabla(\sfp^{1-\alpha})(x_i):=\sum_{l=1}^d\nabla_{\partial_l}X\nabla_{\partial_l}(\sfp^{1-\alpha})$ and $\{\partial_l\}_{l=1}^d$ is an normal coordinate around $x_i$; if we focus on the situation that the stochastic fluctuation of $(\vD_{h,\alpha,n}^{-1}\vP_{h,\alpha,n}\vX-\vX)[i]$ is $o(h^{1/2})$ for all $i$, then with probability higher than $1-O(1/n^2)$, the following holds for all $x_i\in \MM_{h^\gamma}$:
\begin{align*}
(\vD_{h,\alpha,n}^{-1}\vP_{h,\alpha,n}\vX)[i]= &u_i^{-1}\left(X(x)+\sqrt{h}\frac{m_{h,1}}{m_{h,0}}\myP^{x_i}_{x_0}\nabla_{\partial_d}X(x_0)\right) +O(h^{2\gamma}) + O\left(\frac{\sqrt{\log(n)}}{n^{1/2}h^{d/4}}\right),
\end{align*}
where $x_0=\argmin_{y\in\partial\MM}d(x_i,y)$, $m_{h,1}$ and $m_{h,0}$ are constants defined in (\ref{meps0}), and $\partial_d$ is the normal direction to the boundary at $x_0$.

Take $\alpha=0$. If we focus on the situation that the stochastic fluctuation of $(\vD_{h,0,n}^{-1}\vP_{h,0,n}\vX-\vX)[i]$ is $o(h)$ for all $i$, then with probability higher than $1-O(1/n^2)$, the following holds for all $x_i\notin \MM_{h^\gamma}$:
\begin{align}
 h^{-1}(\vD_{h,0,n}^{-1}\vP_{h,0,n}\vX-\vX)[i] = &\frac{\mu^{(0)}_{1,2}}{2d}u_i^{-1}\left\{\nabla^2X(x_i)+\frac{2\nabla X(x_i)\cdot\nabla \sfp (x_i)}{\sfp(x_i)}\right\} + O(h) + O\left(\frac{\sqrt{\log(n)}}{n^{1/2}h^{d/4+1/2}}\right);\nonumber
\end{align}
if we focus on the situation that the stochastic fluctuation of $(\vD_{h,0,n}^{-1}\vP_{h,0,n}\vX-\vX)[i]$ is $o(h^{1/2})$ for all $i$, we have with probability higher than $1-O(1/n^2)$, the following holds for all $x_i\in\MM_{h^{\gamma}}$: 
\begin{align*}
(\vD_{h,\alpha,n}^{-1}\vP_{h,\alpha,n}\vX)[i]= &u_i^{-1}\left(X(x)+\sqrt{h}\frac{m_{h,1}}{m_{h,0}}\myP^{x_i}_{x_0}\nabla_{\partial_d}X(x_0)\right) +O(h^{2\gamma}) + O\left(\frac{\sqrt{\log(n)}}{n^{1/2}h^{d/4-1/4}}\right),
\end{align*}

\end{corollary}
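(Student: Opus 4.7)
The plan is to obtain the corollary as a direct bookkeeping exercise combining the bias expansion of Theorem~\ref{thm:pointwise_conv:approx_of_identity} with the variance bound of Theorem~\ref{thm:pointwise_conv_to_integral}. I will treat the four cases (interior vs.\ boundary, and $0<\alpha\leq 1$ vs.\ $\alpha=0$) in parallel, since the structure of the argument is identical in each and only the tail rate of the stochastic error differs.

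First, I apply Theorem~\ref{thm:pointwise_conv_to_integral} to write, with probability at least $1-O(1/n^2)$,
\[
(\vD_{h,\alpha,n}^{-1}\vP_{h,\alpha,n}\vX-\vX)[i]\;=\;u_i^{-1}\bigl(T_{h,\alpha}X-X\bigr)(x_i)\;+\;R_{\text{var}}(h,n,\alpha,x_i),
\]
where $R_{\text{var}}$ is $O\!\left(\tfrac{\sqrt{\log n}}{n^{1/2}h^{d/4}}\right)$ when $\alpha>0$ and has the appropriate power of $h$ listed in Theorem~\ref{thm:pointwise_conv_to_integral} when $\alpha=0$, with the specific power depending on whether $x_i\in\MM_{h^\gamma}$ or not. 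The hypothesis that the stochastic fluctuation is $o(h)$ (interior) or $o(\sqrt{h})$ (boundary) is exactly what is required to invoke that theorem in each regime.

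Next, I plug in Theorem~\ref{thm:pointwise_conv:approx_of_identity} to replace $(T_{h,\alpha}X-X)(x_i)$ by an explicit local expansion. If $x_i\notin \MM_{h^\gamma}$, this gives
\[
u_i^{-1}\bigl(T_{h,\alpha}X-X\bigr)(x_i)\;=\;h\,\frac{\mu^{(0)}_{1,2}}{2d}\,u_i^{-1}\!\left(\nabla^2X(x_i)+\frac{2\nabla X(x_i)\cdot\nabla(\sfp^{1-\alpha})(x_i)}{\sfp^{1-\alpha}(x_i)}\right)+O(h^2),
\]
and then I divide through by $h$. The leading term is the announced expression involving the connection Laplacian, the bias error $O(h^2)$ becomes $O(h)$, and the variance error $R_{\text{var}}/h$ produces the $h^{d/4+1}$ (respectively $h^{d/4+1/2}$) denominator required in the statement. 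If $x_i\in \MM_{h^\gamma}$, I instead substitute the boundary expansion \eqref{bdryrslt3}, add $\vX[i]=u_i^{-1}X(x_i)$ to both sides to obtain $\vD_{h,\alpha,n}^{-1}\vP_{h,\alpha,n}\vX[i]$ on the left, and keep the bias error as $O(h^{2\gamma})$ and the variance error at the boundary rate $O(\sqrt{\log n}/(n^{1/2}h^{d/4}))$ (or $O(\sqrt{\log n}/(n^{1/2}h^{d/4-1/4}))$ when $\alpha=0$).

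Finally, I combine the two error contributions by the triangle inequality. Since both estimates hold on events of probability $1-O(1/n^2)$, a single union bound preserves this probability and also absorbs the union over the $n$ sample points (this is the origin of the $\sqrt{\log n}$ factor inherited from Theorem~\ref{thm:pointwise_conv_to_integral}). The main obstacle, if there is one, is purely notational: tracking the bias and variance rates separately in the four cases and confirming that no case requires a sharper estimate than what Theorems~\ref{thm:pointwise_conv:approx_of_identity} and~\ref{thm:pointwise_conv_to_integral} already supply. Once the two theorems are in hand, the corollary is essentially automatic.
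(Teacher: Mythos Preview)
Your proposal is correct and matches the paper's own treatment: the paper states explicitly that ``These Theorems lead to the following pointwise convergence of the GCL,'' identifying the error in Theorem~\ref{thm:pointwise_conv_to_integral} as the variance and that in Theorem~\ref{thm:pointwise_conv:approx_of_identity} as the bias, and offers no further argument beyond this. Your decomposition into the four cases and the step of dividing by $h$ (interior) or adding back $\vX[i]$ (boundary) is exactly the bookkeeping the paper leaves implicit.
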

 
\begin{remark}
Several existing results regarding normalized GL and the estimation of Laplace-Beltrami operator are unified in Theorem \ref{thm:pointwise_conv:approx_of_identity}, Theorem \ref{thm:pointwise_conv_to_integral} and Corollary \ref{thm:pointwise_conv}. Indeed, when the principal bundle structure is trivial, that is, when we work with the normalized GL, and when $\alpha=0$, the p.d.f. is uniform and the boundary does not exist, the results in \cite{belkin_niyogi:2005,Gine_Koltchinskii:2006} are recovered; when $\alpha=0$, the p.d.f. is non-uniform and the boundary is not empty, we recover results in \cite{singer:2006,coifman_lafon:2006}; when $\alpha\neq 0$ and the boundary is empty, we recover results in \cite{hein_audibert_luxburg:2005}.
\end{remark}

\begin{remark}
We now discuss how GCL converges from the discrete setup to the continuous setup and the how to choose the optimal bandwidth under the assumption that $\partial\MM=\emptyset$. Similar arguments hold when $\partial\MM\neq \emptyset$. Take $\alpha=0$. Clearly, if $h$ depends on $n$ so that $h_n\to 0$ and $\frac{n^{1/2}h_n^{d/4+1/2}}{\sqrt{\log(n)}}\to \infty$, then when $n\to \infty$, asymptotically $h_n^{-1}(\vD_{h_n,0,n}^{-1}\vP_{h_n,0,n}\vX-\vX)[i]$ converges to $\frac{\mu^{(0)}_{1,2}}{2d}u_i^{-1}\left\{\nabla^2X(x_i)+\frac{2\nabla X(x_i)\cdot\nabla\sfp (x_i)}{\sfp(x_i)}\right\}$ a.s. by the Borel-Cantelli Lemma. When $n$ is finite, by balancing the variance and squared bias we get
$$
h^2 = O\left(\frac{\log(n)}{n h^{d/2+1}}\right),
$$ 
so the optimal kernel bandwidth depending on $n$ which we may choose for the practical purpose is
$$
h_n = O\left(\left(\frac{\log(n)}{n}\right)^{1/(d/2+3)}\right).
$$
Take $\alpha\neq 0$. Similarly, when $h$ depends on $n$ so that $h_n\to 0$ and $\frac{n^{1/2}h_n^{d/4+1}}{\sqrt{\log(n)}}\to \infty$, asymptotically $h_n^{-1}(\vD_{h_n,\alpha,n}^{-1}\vP_{h_n,\alpha,n}\vX-\vX)[i]$ converges to $\frac{\mu^{(0)}_{1,2}}{2d}u_i^{-1}\left\{\nabla^2X(x_i)+\frac{2\nabla X(x_i)\cdot\nabla(\sfp^{1-\alpha})(x_i)}{\sfp^{1-\alpha}(x_i)}\right\}$ a.s.. In this case, the optimal kernel bandwidth is 
$$
h_n = O\left(\left(\frac{\log(n)}{n}\right)^{1/(d/2+4)}\right).
$$
\end{remark}

\begin{remark}
In Theorem \ref{thm:pointwise_conv:approx_of_identity} and Corollary \ref{thm:pointwise_conv}, the regularity of $X$ and the p.d.f. $\sfp$ are assumed to be $C^4$. These conditions can be relaxed to $C^3$ and the proof remains almost same except that the bias term in Corollary \ref{thm:pointwise_conv} becomes $h^{1/2}$.
\end{remark}

\begin{remark}
Note that near the boundary, the error term $m_{h,1}/m_{h,0}$ is of order $\sqrt{h}$, which asymptotically dominates $h$.
A consequence of Corollary \ref{thm:pointwise_conv} and the above discussion about the error terms is that the eigenvectors of $\vD_{h,1,n}^{-1}\vP_{h,1,n}-\vI_n$ are discrete approximations of the eigen-vector-fields of the connection Laplacian operator with homogeneous Neumann boundary condition that satisfy
\begin{equation}
\left\{\begin{array}{ll}
\nabla^2 X(x) = -\lambda X(x), & \mbox{for } x \in \MM, \\
\nabla_{\partial_d}X(x) = 0, & \mbox{for } x \in \partial \MM.
\end{array}\right.
\end{equation}
Also note that the above results are pointwise in nature. The spectral convergence will be discussed in the coming section.
\end{remark}

\subsection{Spectral Convergence}\label{section:spectral_conv}

As informative as the pointwise convergence results in Corollary \ref{thm:pointwise_conv} are, they   are not strong enough to guarantee the spectral convergence of our numerical algorithm, in particular those depending on the spectral structure of the underlying manifold. In this section, we explore this problem and provide the spectral convergence theorem. 

Note that in general $0$ might not be the eigenvalue of the connection Laplaclain $\nabla^2$. For example, when the manifold is $S^2$, the smallest eigenvalue of the connection Laplaclain associated with the tangent bundle is strictly positive due to the vanishing theorem \cite[p. 126]{Berline_Getzler_Vergne:2004}. When $0$ is an eigenvalue, we denote the spectrum of $\nabla^2$ by $\{-\lambda_l\}_{l=0}^\infty$, where $0=\lambda_0<\lambda_1\leq \ldots$, and the corresponding eigenspaces are denoted by $E_l:=\{X\in L^2(\mathcal{E}):~\nabla^2X=-\lambda_l X\}$, $l=0,1,\ldots$; otherwise we denote the spectrum by $\{-\lambda_l\}_{l=1}^\infty$, where $0 <\lambda_1\leq \ldots$, and the eigenspaces by $E_l$. It is well known \cite{gilkey:1974} that $\dim(E_l)<\infty$, the eigen-vector-fields are smooth and form a basis for $L^2(\mathcal{E})$, that is,
$ L^2(\mathcal{E})=\overline{\oplus_{l\in\NN\cup\{0\}} E_l}$, where the over line means completion according to the measure associated with $g$. To simplify the statement and proof, we assume that $\lambda_l$ for each $l$ are simple and $X_l$ is the normalized basis of $E_l$.\footnote{When any of the eigenvalues is not simple, the statement and proof are complicated by introducing the notion of eigen-projection \cite{Chatelin:2011}.} 

The first theorem states the spectral convergence of $(\vD^{-1}_{h,1,n}\vP_{h,1,n})^{t/h}$ to $e^{t\nabla^2}$. Note that in the statement of the theorem, we use $\widehat{T}_{h,1,n}$ instead of $\vD^{-1}_{h,1,n}\vP_{h,1,n}$. As we will see in the proof, they are actually equivalent under proper transformation.
\begin{theorem}\label{thm:spectralconvergence_heatkernel}
Suppose Assumption \ref{Assumption:A}, Assumption \ref{Assumption:B} and Assumption \ref{Assumption:K} hold, and $2/5<\gamma<1/2$. Fix $t>0$. Denote $\mu_{t,i,h,n}$ to be the $i$-th eigenvalue of
$\widehat{T}^{t/h}_{h,1,n}$
with the associated eigenvector $X_{t,i,h,n}$. Also denote $\mu_{t,i}>0$ to be the $i$-th eigenvalue of the heat kernel of the connection Laplacian $e^{t\nabla^2}$ with the associated eigen-vector field $X_{t,i}$. We assume that $\mu_{t,i}$ are simple and both $\mu_{t,i,h,n}$ and $\mu_{t,i}$ decrease as $i$ increase, respecting the multiplicity. Fix $i\in\NN$. Then there exists a sequence $h_n\to 0$ such that $\lim_{n\to \infty}\mu_{t,i,h_n,n}=\mu_{t,i}$ and $\lim_{n\to \infty}\|X_{t,i,h_n,n}-X_{t,i}\|_{L^2(\mathcal{E})}=0$ in probability.
\end{theorem}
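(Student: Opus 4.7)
The plan is to factor the spectral approximation through the integral operator $T_{h,1}$ of Definition \ref{step1:defn1}, combining a stochastic step (from $\widehat{T}_{h,1,n}$ to $T_{h,1}$) with a deterministic step (from $T_{h,1}^{t/h}$ to the heat semigroup $e^{tc\nabla^2}$, where $c := \mu^{(0)}_{1,2}/(2d)$). The theorem then follows from standard spectral perturbation theory for compact self-adjoint operators on $L^2(\mathcal{E})$, together with a diagonal argument to extract the bandwidth sequence $h_n \to 0$.

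For the stochastic step, Theorem \ref{thm:pointwise_conv_to_integral} with $\alpha=1$ gives, for each fixed $X\in C^4(\mathcal{E})$ and each $i$, the pointwise estimate $\widehat{T}_{h,1,n}X(x_i)-T_{h,1}X(x_i) = O(\sqrt{\log n}/(n^{1/2}h^{d/4}))$ with probability $\geq 1-O(1/n^2)$. To lift this to a norm bound on $\widehat{T}_{h,1,n}-T_{h,1}$ restricted to a fixed finite-dimensional spectral subspace of $\nabla^2$, I would use an $\varepsilon$-net / bracketing argument on a compactly parametrized family of test sections -- for instance truncated expansions in the smooth eigen-vector-fields $X_l$ of $\nabla^2$ -- invoking the Glivenko--Cantelli formalism introduced in Section \ref{section:setup}. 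Together with the Nystr\"om-type extension $B_{\mathcal{X}}$ of (\ref{definition:B_X}) that identifies $V_{\mathcal{X}}$ with a subspace of $L^2(\mathcal{E})$, this controls the restricted operator norm by the same $O(\sqrt{\log n}/(n^{1/2}h^{d/4}))$ rate.

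For the deterministic step, Theorem \ref{thm:pointwise_conv:approx_of_identity} with $\alpha=1$ yields the expansion $T_{h,1}=I+hc\nabla^2+O(h^{3/2})$ on the interior $\MM\setminus\MM_{h^\gamma}$, together with a $\sqrt{h}$ boundary correction in $\MM_{h^\gamma}$ that, crucially for $\alpha=1$, involves only the normal derivative $\nabla_{\partial_d}X$ (the tangential gradient present at $\alpha=0$ is killed by the right-normalization by $p_h^\alpha$). This forces the limiting generator to be $c\nabla^2$ subject to homogeneous Neumann boundary conditions. Since $T_{h,1}$ has a smooth symmetric kernel it is a Hilbert--Schmidt self-adjoint operator on $L^2(\mathcal{E})$, and the scalar identity $(1+hx)^{t/h}\to e^{tx}$ upgrades via the functional calculus (or a Trotter--Kato approximation of the self-adjoint generator) to $T_{h,1}^{t/h}\to e^{tc\nabla^2}$ in operator norm on each fixed spectral subspace. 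Spectral stability for compact self-adjoint operators, together with the simple-eigenvalue hypothesis, then gives convergence of individual eigenvalues of $T_{h,1}^{t/h}$ to $\mu_{t,i}$ and of eigen-vector-fields to $X_{t,i}$ in $L^2(\mathcal{E})$ (up to sign).

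The main obstacle is the amplification of the stochastic error by the large power $t/h$: a Lipschitz calculus for $A\mapsto A^{t/h}$ on operators of norm at most $1+O(h)$ gives $\|\widehat{T}_{h,1,n}^{t/h}-T_{h,1}^{t/h}\|\lesssim (t/h)\,\|\widehat{T}_{h,1,n}-T_{h,1}\|$, so one needs the stochastic error divided by $h$ to vanish, i.e., $\sqrt{\log n}/(n^{1/2}h^{d/4+1})\to 0$. This is achievable by choosing $h_n$ to decay sufficiently slowly (the balance $h_n\sim(\log n/n)^{1/(d/2+4)}$ suggested by Section \ref{section:pointwise_conv} is adequate for $\alpha=1$), and the $1-O(1/n^2)$ probability bound is summable, so Borel--Cantelli upgrades the estimate to almost sure convergence along $h_n$. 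Combining the two triangle-inequality steps and invoking the spectral stability theorem yields $\mu_{t,i,h_n,n}\to\mu_{t,i}$ and $\|X_{t,i,h_n,n}-X_{t,i}\|_{L^2(\mathcal{E})}\to 0$ in probability.
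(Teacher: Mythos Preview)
Your overall architecture --- factor through $T_{h,1}$, combine a stochastic step with a deterministic step, then apply spectral stability plus a diagonal argument --- matches the paper's. The deterministic step (Step 3 in the paper) is handled essentially as you describe: expand $T_{h,1}=I+hc\nabla^2+O(h^2)$ on eigen-sections via Theorem \ref{thm:pointwise_conv:approx_of_identity}, control the Sobolev norms of the $X_l$ by powers of $\lambda_l$, and compare $(I+h\nabla^2)^{t/h}$ with $e^{t\nabla^2}$ on a growing spectral window.

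The stochastic step, however, is organized quite differently in the paper, and the difference removes what you flag as ``the main obstacle.'' You propose to bound $\|\widehat{T}_{h,1,n}^{t/h}-T_{h,1}^{t/h}\|$ directly via a Lipschitz estimate for $A\mapsto A^{t/h}$, which forces the $(t/h)$ amplification of the pointwise rate and also raises the issue of defining non-integer powers when $\widehat{T}_{h,1,n}$ is neither self-adjoint nor nonnegative. The paper instead works, for \emph{fixed} $h$, in $C(\mathcal{E})$ with the $L^\infty$ norm and proves \emph{compact convergence} of $\widehat{T}_{h,1,n}$ to $T_{h,1}$ almost surely as $n\to\infty$: pointwise convergence comes from the Glivenko--Cantelli Lemma \ref{class:glivenko-cantelli} (no quantitative rate needed), and collective compactness of $\{\widehat{T}_{h,1,n}X_n\}$ from an Arzela--Ascoli argument using only the uniform bounds of Lemma \ref{lemma:boundsForAllKindsOfK}. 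Compact convergence implies spectral convergence of $\widehat{T}_{h,1,n}$ to $T_{h,1}$ (Chatelin), and since $A$ and $A^{t/h}$ share eigenvectors and $\nu\mapsto\nu^{t/h}$ is continuous for fixed $h$, spectral convergence of the powers follows for free --- no amplification, no need for $T_{h,1}$ to be self-adjoint, no $\varepsilon$-net on test sections. The diagonal extraction in the final step then only needs, for each $h_j=1/j$, to pick $n_j$ large enough.

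What your approach would buy, if the Lipschitz-for-powers step were made rigorous (handling non-integer exponents and possible negative eigenvalues), is an explicit coupling of $h$ to $n$ with rates; the paper's compact-convergence route is rate-free at the stochastic step but considerably cleaner. Two minor inaccuracies in your write-up: $T_{h,1}$ is not self-adjoint on $L^2(\mathcal{E})$ (only similar to a self-adjoint operator, as with $\vD^{-1}\vS$ versus $\vD^{-1/2}\vS\vD^{-1/2}$), and the boundary expansion (\ref{bdryrslt3}) already involves only the normal derivative for every $\alpha$, not just $\alpha=1$.
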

\begin{remark}
Recall that for a finite integer $n$, as is discussed in (\ref{fact:MayBeNegative}), $\mu_{t,i,h,n}$ may be negative while $\mu_{t,i}$ is always non-negative. We mention that the existence of $\gamma$ is for the sake of dealing with the boundary, whose effect is shown in (\ref{bdryrslt3}). When the boundary is empty, we can ignore the $\gamma$ assumption. 
\end{remark}
The second theorem states the spectral convergence of $h^{-1}(\vD^{-1}_{h,1,n}\vP_{h,1,n}-\vI_{qn})$ to $\nabla^2$.
\begin{theorem}\label{thm:spectralconvergence_laplacian}
Suppose Assumption \ref{Assumption:A}, Assumption \ref{Assumption:B} and Assumption \ref{Assumption:K} hold, and $2/5<\gamma<1/2$. Denote $-\lambda_{i,h,n}$ to be the $i$-th eigenvalue of
$h^{-1}(\widehat{T}_{h,1,n}-1)$
with the associated eigenvector $X_{i,h,n}$. Also denote $-\lambda_i$, where $\lambda_i>0$, to be the $i$-th eigenvalue of the connection Laplacian $\nabla^2$ with the associated eigen-vector field $X_i$. We assume that $\lambda_{i}$ are simple and both $\lambda_{i,h,n}$ and $\lambda_i$ increase as $i$ increase, respecting the multiplicity. Fix $i\in\NN$. Then there exists a sequence $h_n\to 0$ such that $\lim_{n\to \infty}\lambda_{i,h_n,n}=\lambda_i$ and $\lim_{n\to \infty}\|X_{i,h_n,n}-X_i\|_{L^2(\mathcal{E})}=0$ in probability.
\end{theorem}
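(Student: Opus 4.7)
The plan is to deduce Theorem \ref{thm:spectralconvergence_laplacian} directly from Theorem \ref{thm:spectralconvergence_heatkernel} via functional calculus. The key observation is that both $h^{-1}(\widehat{T}_{h,1,n}-1)$ and $\widehat{T}_{h,1,n}^{t/h}$ are functions of the same operator $\widehat{T}_{h,1,n}$, hence they share eigenvectors and their nonzero eigenvalues are in bijection through $\mu \mapsto \mu^{t/h}$ on one side and $\mu \mapsto (\mu-1)/h$ on the other. Likewise on the limit side, $\nabla^2 X = -\lambda X$ if and only if $e^{t\nabla^2}X = e^{-t\lambda}X$, so $X_{t,i}=X_i$ and $\mu_{t,i}=e^{-t\lambda_i}$, and the decreasing ordering of $\mu_{t,i}$ matches the increasing ordering of $\lambda_i$. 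Fixing any $t>0$, it therefore suffices to carry both the eigenvector and the eigenvalue information across this functional-calculus bijection.

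Concretely, if $\widehat{T}_{h,1,n}X=\mu X$, write $\mu=1-h\lambda$, so $h^{-1}(\widehat{T}_{h,1,n}-1)X=-\lambda X$ and $\widehat{T}_{h,1,n}^{t/h}X=(1-h\lambda)^{t/h}X$. Corollary \ref{thm:pointwise_conv} (applied along with the identification in Theorem \ref{thm:pointwise_conv_to_integral}) forces the $i$-th largest eigenvalue of $\widehat{T}_{h,1,n}$ to cluster near $1$ for $h$ small, so the relevant eigenvalues lie in the positive range where $\lambda\mapsto(1-h\lambda)^{t/h}$ is strictly decreasing; this gives the promised index-matching $X_{i,h,n}=X_{t,i,h,n}$ and $\mu_{t,i,h,n}=(1-h\lambda_{i,h,n})^{t/h}$ for the top $i$. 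Take the sequence $h_n\to 0$ provided by Theorem \ref{thm:spectralconvergence_heatkernel}; then in probability $X_{i,h_n,n}=X_{t,i,h_n,n}\to X_{t,i}=X_i$ in $L^2(\mathcal{E})$, which is exactly the eigenvector convergence claimed. For the eigenvalues, set $\alpha_n := h_n\lambda_{i,h_n,n}$, so that $(1-\alpha_n)^{t/h_n}=\mu_{t,i,h_n,n}\to e^{-t\lambda_i}>0$. Taking logarithms,
\[
\frac{t}{h_n}\log(1-\alpha_n)=\log\mu_{t,i,h_n,n}\xrightarrow[n\to\infty]{}-t\lambda_i.
\]
Multiplying by $h_n/t\to 0$ shows $\log(1-\alpha_n)\to 0$, hence $\alpha_n\to 0$, and the Taylor expansion $\log(1-\alpha_n)=-\alpha_n(1+o(1))$ then yields $\lambda_{i,h_n,n}(1+o(1))=-t^{-1}\log\mu_{t,i,h_n,n}\to\lambda_i$.

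The main obstacle is the index identification between the two spectral problems. Because $\widehat{T}_{h,1,n}$ need not be positive semidefinite (cf.\ \eqref{fact:MayBeNegative}), the non-integer power $\widehat{T}_{h,1,n}^{t/h}$ is naturally defined only on the positive part of its spectrum, and a priori one might worry about negative or zero eigenvalues disturbing the ordering. This is resolved by invoking the pointwise convergence already proved: for any fixed index $i$, the $i$-th largest eigenvalue of $\widehat{T}_{h_n,1,n}$ is trapped in a shrinking neighborhood of $1$, so for $n$ large it is positive and the map $\lambda\mapsto(1-h\lambda)^{t/h}$ is order-preserving on the set of first $i$ eigenvalues. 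A second, minor, point is that Theorem \ref{thm:spectralconvergence_heatkernel} gives only subsequential convergence in probability; the whole argument is performed along that single subsequence $h_n$, which suffices for the statement sought here.
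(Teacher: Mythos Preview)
Your route is genuinely different from the paper's. The paper does \emph{not} deduce Theorem~\ref{thm:spectralconvergence_laplacian} from Theorem~\ref{thm:spectralconvergence_heatkernel}; it proves both simultaneously by the same three-step program. Step~1 identifies the spectra of $\widehat{T}_{h,1,n}$ and $\vD_{h,1,n}^{-1}\vP_{h,1,n}$, Step~2 shows compact (hence spectral) convergence $\widehat{T}_{h,1,n}\to T_{h,1}$ a.s.\ for fixed $h$ via Glivenko--Cantelli arguments (Lemma~\ref{class:glivenko-cantelli}) and Arzel\`a--Ascoli, and Step~3 shows directly that $\|h^{-1}(T_{h,1}-1)-\nabla^2\|_{L^2}=O(h^{1/2})$ on the finite-dimensional span of low eigenspaces. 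Your functional-calculus reduction is more economical in that it reuses Theorem~\ref{thm:spectralconvergence_heatkernel} wholesale, whereas the paper's direct route avoids any discussion of how $\widehat{T}_{h,1,n}^{t/h}$ is even defined for non-integer $t/h$ in the presence of negative spectrum.

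There is, however, a real gap in your index-matching step. You invoke Corollary~\ref{thm:pointwise_conv} to claim that ``the $i$-th largest eigenvalue of $\widehat{T}_{h,1,n}$ clusters near $1$.'' Corollary~\ref{thm:pointwise_conv} is a \emph{pointwise} statement---it controls $h^{-1}(\widehat{T}_{h,1,n}-1)X$ for a fixed smooth section $X$---and pointwise convergence of operators does not by itself constrain eigenvalues. That is precisely the content one is trying to prove. The fix is to stay inside Theorem~\ref{thm:spectralconvergence_heatkernel}: along the sequence $h_n$ it provides, $\mu_{t,i,h_n,n}\to e^{-t\lambda_i}>0$, so eventually $\mu_{t,i,h_n,n}>0$. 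Under the natural functional-calculus definition of the fractional power (zero on the nonpositive spectrum), a strictly positive $i$-th eigenvalue of $\widehat{T}_{h_n,1,n}^{t/h_n}$ forces the $i$-th eigenvalue $\mu_i$ of $\widehat{T}_{h_n,1,n}$ to be positive, and then $\mu_{t,i,h_n,n}=\mu_i^{t/h_n}$ with matching eigenvectors $X_{t,i,h_n,n}=X_{i,h_n,n}$. With this correction your logarithm computation goes through unchanged and the argument is complete.
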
 
Note that the statement and proof hold for the special cases associated with DM and ODM. We prepare some bounds for the proof.

\begin{lemma}\label{lemma:boundsForAllKindsOfK} 
Take $0\leq\alpha\leq 1$ and $h>0$. Assume Assumption \ref{Assumption:A}, Assumption \ref{Assumption:B} and Assumption \ref{Assumption:K} hold. Then the following uniform bounds hold 
\begin{equation}\label{bound:Mhat}
\begin{split}
&\delta\leq p_h(x) \leq \|K\|_{L^\infty},\quad\delta\leq  \widehat{p}_{h,n}(x) \leq \|K\|_{L^\infty} \\
&\frac{\delta}{\|K\|_{L^\infty}^{2\alpha}}\leq K_{h,\alpha}(x,y)  \leq \frac{\|K\|_{L^\infty}}{\delta^{2\alpha}},\quad\frac{\delta}{\|K\|_{L^\infty}^{2\alpha}}\leq  \widehat{K}_{h,\alpha,n}(x,y) \leq \frac{\|K\|_{L^\infty}}{\delta^{2\alpha}} \\
&\frac{\delta}{\|K\|_{L^\infty}^{2\alpha}}\leq d_{h,\alpha}(x) \leq \frac{\|K\|_{L^\infty}}{\delta^{2\alpha}},\quad \frac{\delta}{\|K\|_{L^\infty}^{2\alpha}}\leq \widehat{d}_{h,\alpha,n}(x) \leq \frac{\|K\|_{L^\infty}}{\delta^{2\alpha}} \\
&\frac{\delta^{1+2\alpha}}{\|K\|^{1+2\alpha}_{L^\infty}}\leq M_{h,\alpha}(x,y) \leq \frac{\|K\|_{L^\infty}^{1+2\alpha}}{\delta^{1+2\alpha}},\quad\frac{\delta^{1+2\alpha}}{\|K\|^{1+2\alpha}_{L^\infty}}\leq \widehat{M}_{h,\alpha,n}(x,y) \leq \frac{\|K\|_{L^\infty}^{1+2\alpha}}{\delta^{1+2\alpha}},
\end{split}
\end{equation}
where $\delta:=\inf_{t\in[0,D/\sqrt{h}]} K(t)$ and $D=:\max_{x,y\in\MM}\|x-y\|_{\RR^p}$.
\end{lemma}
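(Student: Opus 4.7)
The plan is to peel the bounds off one layer at a time, starting from $K_h$ itself and propagating through the definitions (\ref{definition:continuousK}) and (\ref{definition:finiteK}). First I would observe that for any $x, y \in \MM$ the argument $\|x-y\|_{\RR^p}/\sqrt{h}$ lies in $[0, D/\sqrt{h}]$ by the very definition of $D$, so by the definition of $\delta$ and the positivity of $K$ from Assumption \ref{Assumption:K} we have the pointwise sandwich
\[
\delta \;\leq\; K_h(x,y) \;\leq\; \|K\|_{L^\infty}.
\]
This is the only place where compactness of $\MM$ (giving a finite diameter $D$) enters; everything else will be chained from here.

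Next I would integrate against $\sfp(y)\,\ud V(y)$. Since $\sfp$ is a probability density, $\int_\MM \sfp(y)\,\ud V(y) = 1$, so the same bounds pass through to $p_h(x) = \int K_h(x,y)\sfp(y)\,\ud V(y)$. For the empirical version $\widehat{p}_{h,n}(x) = \frac{1}{n}\sum_k K_h(x,x_k)$, each summand satisfies the same pointwise bound, so the arithmetic mean inherits it. This establishes the first two inequalities in (\ref{bound:Mhat}). Plugging into $K_{h,\alpha}(x,y) = K_h(x,y)/[p_h^\alpha(x)\,p_h^\alpha(y)]$: the numerator lies in $[\delta,\|K\|_{L^\infty}]$, and since $\alpha\in[0,1]$ and $p_h$ is bounded away from $0$, the product $p_h^\alpha(x)\,p_h^\alpha(y)$ lies in $[\delta^{2\alpha},\|K\|_{L^\infty}^{2\alpha}]$. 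Taking the worst-case ratio in each direction yields the stated bounds, and the same argument applies verbatim to $\widehat{K}_{h,\alpha,n}$ using the bound on $\widehat{p}_{h,n}$.

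The remaining two lines of (\ref{bound:Mhat}) follow by iterating the same two operations. Integrating $K_{h,\alpha}(x,y)$ against $\sfp(y)\,\ud V(y)$ (still a probability measure) preserves the just-obtained bounds, giving the bounds on $d_{h,\alpha}$; averaging $\widehat{K}_{h,\alpha,n}(x,x_k)$ over $k$ does the same for $\widehat{d}_{h,\alpha,n}$. Finally, $M_{h,\alpha}(x,y) = K_{h,\alpha}(x,y)/d_{h,\alpha}(x)$ has numerator in $[\delta/\|K\|_{L^\infty}^{2\alpha},\,\|K\|_{L^\infty}/\delta^{2\alpha}]$ and denominator in the same interval, so the ratio is pinned to $[\delta^{1+2\alpha}/\|K\|_{L^\infty}^{1+2\alpha},\,\|K\|_{L^\infty}^{1+2\alpha}/\delta^{1+2\alpha}]$, and the hat-version argument is identical.

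The main obstacle is essentially non-existent here: the lemma is a bookkeeping consequence of three facts — the initial two-sided bound on $K_h$, the observation that integration against the probability measure $\sfp\,\ud V$ or averaging over finite samples preserves uniform two-sided bounds, and the elementary ratio estimate $a/B \leq (\cdot)/(\cdot) \leq A/b$ when numerator and denominator lie in $[a,A]$ and $[b,B]$ respectively. The only subtlety worth flagging is that $\delta>0$; this uses positivity of $K$ on $\RR_+$ together with continuity of $K$ on the compact set $[0,D/\sqrt{h}]$, both guaranteed by Assumption \ref{Assumption:K}.
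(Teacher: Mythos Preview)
Your proof is correct and follows the same route as the paper: establish the two-sided bound $\delta \leq K_h(x,y) \leq \|K\|_{L^\infty}$ from compactness of $\MM$ and positivity of $K$, then propagate it through the definitions. The paper's proof simply states that the remaining bounds ``hold immediately'' after this first step, whereas you spell out the propagation through averaging and ratio estimates explicitly; the content is identical.
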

\begin{proof}
By the assumption that the manifold $\MM$ is compact, there exists $D>0$ so that $\|x-y\|_{\RR^p}\leq D$ for all $x,y\in\MM$. Under the assumption that the kernel function $K$ is positive in Assumption \ref{Assumption:K}, for a fixed $h>0$, for all $n\in\NN$ and $x,y\in\MM$ we have
\[
K_h(x,y)\geq \delta:=\inf_{t\in[0,D/\sqrt{h}]} K(t).
\]
Then, for all $x,y\in\MM$, the bounds in (\ref{bound:Mhat}) hold immediately. 
\end{proof}
To prove Theorem \ref{thm:spectralconvergence_heatkernel} and Theorem \ref{thm:spectralconvergence_laplacian}, we need the following Lemma to take care of the pointwise convergence of a series of vector fields in the uniform norm on $\MM$ with the help of the notion of Glivenko-Cantelli class:
\begin{lemma}\label{class:glivenko-cantelli}
Take $0\leq \alpha\leq 1$ and fix $h>0$. Suppose Assumption \ref{Assumption:A}, Assumption \ref{Assumption:B} and Assumption \ref{Assumption:K} are satisfied. Denote two functional classes
\begin{align*}
 \mathcal{K}_h :=\{K_h(x,\cdot);\,x\in \MM\},\quad
 \mathcal{K}_{h,\alpha}:=\{K_{h,\alpha}(x,\cdot);\,x\in \MM\}.
\end{align*}
Then the above classes are Glivenko-Cantelli classes. Take $X\in C(\mathcal{E})$ and a measurable section $q_0:\MM\to P(\MM,G)$, and denote
\begin{align*}
X\circ\mathcal{M}_{h,\alpha}:=\Big\{M_{h,\alpha}(x,\cdot)q_0(x)^{T}\myP^{x}_{\cdot}X(\cdot);\,x\in \MM\Big\}.
\end{align*}
Then the above classes satisfy
\begin{align}\label{GC5:rslt}
\sup_{W\in X\circ\mathcal{M}_{h,\alpha}}\| \bP_nW-\bP W\|_{\RR^{q}}\to 0
\end{align}
a.s. when $n\to \infty$.
\end{lemma}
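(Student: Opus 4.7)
The plan is to verify the sufficient condition for the Glivenko--Cantelli property via the finite bracketing number criterion in $L_1(\bP)$: once $N_{[]}(\epsilon,\mathfrak{F},L_1(\bP))<\infty$ for every $\epsilon>0$, the classical bracketing theorem (cf.\ Theorem~19.4 of \cite{vanderVaart_Wellner:1996}) delivers a.s.\ uniform convergence of the empirical means over $\mathfrak{F}$. For $\mathcal{K}_h$ and $\mathcal{K}_{h,\alpha}$ this reduces to a uniform continuity argument on the compact product $\MM\times\MM$; for the vector-valued class $X\circ\mathcal{M}_{h,\alpha}$ I will first reduce coordinate-wise to $q$ scalar classes and then handle the discontinuity stemming from the measurability of $q_0$ and from the cut locus by means of local trivializations of $P(\MM,G)$.

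For $\mathcal{K}_h$, assumption (K1) gives $K\in C^2(\RR_+)$, hence $K_h\in C(\MM\times\MM)$. Uniform continuity on the compact product yields, for every $\epsilon>0$, a threshold $\eta>0$ such that $d(x,x')<\eta$ implies $\sup_{y\in\MM}|K_h(x,y)-K_h(x',y)|<\epsilon/2$. Covering $\MM$ by a finite $\eta$-net $x_1,\ldots,x_N$, the brackets $[K_h(x_i,\cdot)-\epsilon/2,\,K_h(x_i,\cdot)+\epsilon/2]$ cover $\mathcal{K}_h$ and have $L_1(\bP)$-width at most $\epsilon$. The same argument works verbatim for $\mathcal{K}_{h,\alpha}$, since by Lemma~\ref{lemma:boundsForAllKindsOfK} the normalizer $p_h$ is uniformly bounded below by a positive constant, and it is continuous in $x$ (from continuity of $K_h$ and $\sfp\in C^4$), so $K_{h,\alpha}\in C(\MM\times\MM)$.

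For $X\circ\mathcal{M}_{h,\alpha}$, the inequality $\|\bP_nW-\bP W\|_{\RR^{q}}\leq\sqrt{q}\,\max_i|(\bP_n-\bP)W^{(i)}|$ reduces the claim to a.s.\ uniform convergence for each of the $q$ scalar coordinate classes. The obstacle is that $q_0:\MM\to P(\MM,G)$ is only measurable, so $x\mapsto W_x$ may be discontinuous, and moreover $\myP^{x}_{y}X(y)$ is set to zero on the cut locus $C(x)$. To bypass this, I would exploit that $\rho(G)\subset O(q)$ acts on $\RR^{q}$ by isometries: cover $\MM$ by a finite collection of charts on each of which $P(\MM,G)$ admits a continuous local section $u^*_x$, write $q_0(x)=u^*_x\circ g(x)$ with $g$ measurable into $G$, and observe that the $\RR^{q}$-norm of $\bP_nW_x-\bP W_x$ is unchanged when $q_0(x)^T$ is replaced by $(u^*_x)^T$, since multiplication by $\rho(g(x))^{-1}$ preserves the norm. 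On each chart the resulting integrand is jointly continuous in $(x,y)$ off the cut locus, and the bracketing argument from the first two classes then applies; a finite patching over the charts concludes.

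The main obstacle will be verifying that the cut-locus contributions can be absorbed into the brackets with vanishing $L_1(\bP)$-loss, uniformly in $x$. This should follow from the fact that the cut locus of any point has zero Riemannian volume, that $\sfp$ is bounded above, and that one can shrink both the charts and the $\eta$-net so that the integral of $\|W_x-W_{x_i}\|_{\RR^{q}}$ over a small tubular neighborhood of the cut locus is $O(\epsilon)$ uniformly.
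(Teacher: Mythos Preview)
Your proposal follows the same skeleton as the paper's proof: reduce the vector-valued class to its $q$ scalar coordinate classes, then verify finite $L_1$-bracketing and run the standard bracket-to-SLLN argument. The paper, after establishing the uniform bound $\|W_x\|_{L^\infty}\le(\|K\|_{L^\infty}/\delta)^{1+2\alpha}\|X\|_{L^\infty}$, simply asserts ``Since $\MM$ is compact and $W_x$ is uniformly bounded over $x$, we can choose finite $\epsilon$-brackets'' and proceeds; it does not address either the merely-measurable section $q_0$ or the cut-locus convention, both of which obstruct the pointwise continuity of $x\mapsto W_{x,j}$ that one would ordinarily use to manufacture the brackets. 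Your orthogonality observation---that since $\rho(G)\subset O(q)$ and $q_0(x)^{-1}$ is constant in the integration variable, $\|\bP_nW_x-\bP W_x\|_{\RR^q}$ is unchanged upon replacing $q_0(x)$ by a \emph{continuous} local section $u^*_x$---is precisely the missing step: after this substitution and a finite patching over trivializing charts, the integrand is jointly continuous in $(x,y)$ off the closed, measure-zero total cut locus, and the finite-bracketing argument you give for $\mathcal{K}_h$ then applies. The uniformity you flag as the main obstacle follows from closedness of $\{(x,y):y\in C(x)\}$ together with compactness of $\MM$, which forces $\sup_{x_0}\bP\big(\bigcup_{x\in B(x_0,r)}C(x)\big)\to0$ as $r\to0$. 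In short, your route is the paper's route with the gap filled in; what it buys is an honest justification of the bracketing claim, at the cost of some bundle-geometric and cut-locus bookkeeping the paper leaves tacit.
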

Note that $W_x \in X\circ\mathcal{M}_{h,\alpha}$ is a $\RR^q$-valued function defined on $\MM$. Also recall that when $y$ is in the cut locus of $x$, we set $\myP^x_yW_x(y)=0$. The above notations are chosen to be compatible with the matrix notation used in the VDM algorithm.  
\begin{proof}
We prove (\ref{GC5:rslt}). The proof for $\mathcal{K}_h$ and $\mathcal{K}_{h,\alpha}$ can be found in \cite[Proposition 11]{VonLuxburg2008}.  
Take $W_x\in X\circ\mathcal{M}_{h,\alpha}$. Since $X\in C(\mathcal{E})$, $\MM$ is compact, $\nabla^{\mathcal{E}}$ is metric and $ q(x) :\RR^{q}\to E_x$ preserving the inner product, we know
$$
\|W_x\|_{L^\infty}\leq  \frac{\|K\|_{L^\infty}^{1+2\alpha}}{\delta^{1+2\alpha}}\| q(x)^{-1}\myP^{x}_{y}X(y)\|_{L^\infty}=\frac{\|K\|_{L^\infty}^{1+2\alpha}}{\delta^{1+2\alpha}} \|X\|_{L^\infty},
$$
where the first inequality holds by the bound in Lemma \ref{lemma:boundsForAllKindsOfK}. Under Assumption \ref{Assumption:A}, $g_x$ is isometric pointwisely, so $X\circ\mathcal{M}_{h,\alpha}$ is uniformly bounded. 

We now tackle the vector-valued function $W_x$ component by component. Rewrite a vector-valued function $W_x$ as $W_x=(W_{x,1},\ldots,W_{x,q})$. Consider
$$
 \mathcal{M}^{(j)}_{h,\alpha}:=\Big\{M_{h,\alpha}(x,\cdot)W_{x,j}(\cdot),\,x\in \MM\Big\},
$$
where $j=1,\ldots q$. Fix $\epsilon>0$. Since $\MM$ is compact and $W_{x}$ is uniformly bounded over $x$, we can choose finite $\epsilon$-brackets $[l_{j,i},u_{j,i}]$, where $i=1,\ldots,N(j,\epsilon)$. 
so that its union contains $ \mathcal{M}^{(j)}_{h,\alpha}$ and $\bP |u_{j,i}-l_{j,i}|<\epsilon$ for all $i=1,\ldots,N(j,\epsilon)$. Then, for every $f\in \mathcal{M}^{(j)}_{h,\alpha}$, there is an $\epsilon$-bracket $[l_{j,l},u_{j,l}]$ in $L^1(P)$ such that $l_{j,l}\leq f\leq u_{j,l}$, and hence
\begin{align}
|\bP_nf-\bP f|&\leq |\bP_nf-\bP u_{j,l}|+\bP |u_{j,l}(y)-f(y)|\leq |\bP_nu_{j,l}-\bP u_{j,l}|+\bP |u_{j,l} -f |\nonumber\\
&\leq |\bP_nu_{j,l}-\bP u_{j,l}|+\bP |u_{j,l} -l_{j,l} | \leq |\bP_nu_{j,l}-\bP u_{j,l}|+\epsilon.\nonumber
\end{align}
Hence we have
\begin{align}
\sup_{f\in  \mathcal{M}^{(j)}_{h,\alpha}}|\bP_nf-\bP f|\leq\max_{l=1,\ldots,N(j,\epsilon)}|\bP_nu_{j,l}-\bP u_{j,l}|+\epsilon,\nonumber
\end{align}
where the right hand side converges a.s. to $\epsilon$ when $n\to\infty$ by the strong law of large numbers and the fact that $N(j,\epsilon)$ is finite.
As a result, we have
\begin{align}
& | \bP_nW_x-\bP W_x|\leq \sum_{l=1}^{q}\sup_{f\in  \mathcal{M}^{(j)}_{h,\alpha}}|\bP_nf-\bP f| \leq \sum_{j=1}^{q}\max_{l=1,\ldots,N(j,\epsilon)}|\bP_nu_{j,l}-\bP u_{j,l}|+q\epsilon \nonumber,
\end{align}
so that $\limsup_{W_x\in X\circ\mathcal{M}_{h,\alpha}}| \bP_nW_x-\bP W_x|$ is bounded by $q\epsilon$ a.s. as $n\to\infty$. Since $q$ is fixed and $\epsilon$ is arbitrary, we conclude the proof.
\end{proof}

With these Lemmas, we now prove Theorem \ref{thm:spectralconvergence_heatkernel} and Theorem \ref{thm:spectralconvergence_laplacian}. The proof consists of three steps. First, we relate the normalized GCL to an integral operator $\widehat{T}_{h,\alpha,n}$. Second, for a given fixed bandwidth $h>0$, we show a.s. spectral convergence of $\widehat{T}_{h,\alpha,n}$ to $T_{h,\alpha}$ when $n\to \infty$. Third, the spectral convergence of $T_{h,1}$ to $\nabla^2X$ in $L^2(\mathcal{E})$ as $h\to 0$ is proved. Finally, we put all ingredients together and finish the proof. Essentially the proof follows \cite{coifman_lafon:2006,belkin_niyogi:2007,VonLuxburg2008}, while we take care simultaneously the non-empty boundary, the nonuniform sampling and the non-trivial bundle structure. Note that when we work with the trivial principal bundle, that is, we work with the normalized GL, $\alpha=0$, the p.d.f. is uniform and the boundary is empty, then we recover the result in \cite{belkin_niyogi:2007}.
\newline 
\begin{proof}[Theorem \ref{thm:spectralconvergence_heatkernel} and Theorem \ref{thm:spectralconvergence_laplacian}]

\underline{\textbf{Step 1: Relationship between $\vD_{h,\alpha,n}^{-1}\vP_{h,\alpha,n}$ and $\widehat{T}_{h,\alpha,n}$.}}

We immediately have that
\begin{equation}\label{proof:step1:expansionBDTX}
(B^T_{\mathcal{X}}\delta_{\mathcal{X}}\widehat{T}_{h,\alpha,n}X)[i]=\frac{1}{n}\sum_{j=1}^n \widehat{M}_{h,\alpha,n}\left(x_i,x_j\right)u_i^{-1}\myP^{i}_{j}X(x_j)=(\vD_{h,\alpha,n}^{-1}\vP_{h,\alpha,n}\vX)[i],
\end{equation}
which leads to the relationship between the eigen-structure of $h^{-1}(\widehat{T}_{h,\alpha,n}-1)$ and $h^{-1}(\vD_{h,\alpha,n}^{-1}\vP_{h,\alpha,n}-\vI)$. Suppose $X$ is an eigen-section of $h^{-1}(\widehat{T}_{h,\alpha,n}-1)$ with eigenvalue $\lambda$. We claim that $\vX=B^T_{\mathcal{X}}\delta_{\mathcal{X}}X$ is an eigenvector of $\vD_{h,\alpha,n}^{-1}\vP_{h,\alpha,n}$ with eigenvalue $\lambda$. Indeed, for all $i=1,\ldots,n$,
\begin{align}
&h^{-1}[(\vD_{h,\alpha,n}^{-1}\vP_{h,\alpha,n}-\vI)\vX][i]=\,\frac{1}{hn}\sum_{j=1}^n \widehat{M}_{h,\alpha,n}\left(x_i,x_j\right)u^{-1}_i[\myP^{i}_{j}X(x_j)-X(x_i)]\nonumber\\
=&\,u^{-1}_i\frac{1}{hn}\sum_{j=1}^n \widehat{M}_{h,\alpha,n}\left(x_i,x_j\right)[\myP^{i}_{i}X(x_j)-X(x_i)]=\,u^{-1}_ih^{-1}(\widehat{T}_{h,\alpha,n}-1)X(x_i) = \lambda u^{-1}_iX(x_i)=\lambda\vX[i]\nonumber.
\end{align} 
On the other hand, given an eigenvector $\vv$ of $h^{-1}(\vD_{h,\alpha,n}^{-1}\vP_{h,\alpha,n}-\vI_{nd})$ with eigenvalue $\lambda$, that is,
\begin{equation}
(\vD_{h,\alpha,n}^{-1}\vP_{h,\alpha,n}\vv)[i] =(1+h\lambda) \vv[i].
\end{equation}
When $0\geq h\lambda>-1$, we show that there is an eigen-vector field of $h^{-1}(\widehat{T}_{h,\alpha,n}-1)$ with eigenvalue $\lambda$. In order to show this fact, we note that if $X$ is an eigen-vector field of $h^{-1}(\widehat{T}_{h,\alpha,n}-1)$ with eigenvalue $\lambda$ so that $0\geq h\lambda> -1$, it should satisfy
\begin{align}
X(x_i)=&\frac{\widehat{T}_{h,\alpha,n}X(x_i)}{1+h\lambda}=\frac{\frac{1}{n}\sum_{j=1}^n \widehat{M}_{h,\alpha,n}\left(x_i,x_j\right)\myP^{i}_{j}X(x_j)}{ 1+h\lambda }\nonumber\\
=&\,\frac{\frac{1}{n}\sum_{j=1}^n \widehat{M}_{h,\alpha,n}\left(x_i,x_j\right)\myP^{i}_{j}u_ju^{-1}_jX(x_j)}{1+h\lambda}=\frac{\frac{1}{n}\sum_{j=1}^n \widehat{M}_{h,\alpha,n}\left(x_i,x_j\right)\myP^{i}_{j}u_j\vX [j]}{1+h\lambda}.\label{step1:motivation1}
\end{align}
The relationship in (\ref{step1:motivation1}) leads us to consider the vector field
$$
X_{\vv}(x):=\frac{\frac{1}{n}\sum_{j=1}^n \widehat{M}_{h,\alpha,n}\left(x,x_j\right)\myP^{x}_{x_j}u_j\vv[j]}{1+h\lambda}
$$
to be the related eigen-vector field of $h^{-1}(\widehat{T}_{h,\alpha,n}-1)$ associated with $\vv$. To show this, we directly calculate:
\begin{align}
\widehat{T}_{h,\alpha,n}X_{\vv}(y)=&\, \frac{1}{n}\sum_{j=1}^n \widehat{M}_{h,\alpha,n}\left(y,x_j\right)\myP^{y}_{x_j}X_{\vv}(x_j)=\frac{1}{n}\sum_{j=1}^n \widehat{M}_{h,\alpha,n}\left(y,x_j\right)\myP^{y}_{x_j}\Big(\frac{\frac{1}{n}\sum_{k=1}^n \widehat{M}_{h,\alpha,n}\left(x_j,x_k\right)\myP^{x_j}_{x_k}u_k\vv[k]}{1+h\lambda}\Big)\nonumber\\
=&\,\frac{1}{1+h\lambda}\frac{1}{n}\sum_{j=1}^n \widehat{M}_{h,\alpha,n}\left(y,x_j\right)\myP^{y}_{x_j}(1+h\lambda) u_j\vv[j] = (1+h\lambda)X_{\vv}(y)\nonumber,
\end{align}
where the third equality comes from the expansion (\ref{proof:step1:expansionBDTX}) and the last equality comes from the definition of $X_{\vv}$.
Thus we conclude that $X_{\vv}$ is the eigen-vector field of $h^{-1}(\widehat{T}_{h,\alpha,n}-1)$ with eigenvalue $\lambda$ since $0\geq h\lambda>-1$. 

The above one to one relationship between eigenvalues and eigenfunctions of $h^{-1}(\widehat{T}_{h,\alpha,n}-1)$ and $h^{-1}(\vD_{h,\alpha,n}^{-1}\vP_{h,\alpha,n}-\vI)$ when $0\geq h\lambda>-1$ allows us to analyze the spectral convergence of $h^{-1}(\vD_{h,\alpha,n}^{-1}\vP_{h,\alpha,n}-\vI)$ by analyzing the spectral convergence of $h^{-1}(\widehat{T}_{h,\alpha,n}-1)$ to $h^{-1}(T_{h,\alpha}-1)$. A similar argument shows that when the eigenvalue of $\vD_{h,\alpha,n}^{-1}\vP_{h,\alpha,n}$ is between $(0,1]$, the eigen-structures of $\vD_{h,\alpha,n}^{-1}\vP_{h,\alpha,n}$ and $\widehat{T}_{h,\alpha,n}$ are again related. Note that in general the eigenvalues of $\vD_{h,\alpha,n}^{-1}\vP_{h,\alpha,n}$ might not negative when $n$ is finite, as is shown in (\ref{fact:MayBeNegative}).
\newline\newline
\underline{\textbf{Step 2:} compact convergence of $\widehat{T}_{h,\alpha,n}$ to $T_{h,\alpha}$ a.s. when $n\to \infty$ and $h$ is fixed.}

{\allowdisplaybreaks

Recall the definition of compact convergence of a series of operators \cite[p122]{Chatelin:2011} in $C(\mathcal{E})$ with the $L^\infty$ norm. We say that a sequence of operators $\mathtt{T}_{n}:C(\mathcal{E})\to C(\mathcal{E})$ compactly converges to $\mathtt{T}:C(\mathcal{E})\to C(\mathcal{E})$ if and only if
\begin{enumerate}
\item[(C1)] $\mathtt{T}_{n}$ converges to $\mathtt{T}$ pointwisely, that is, for all $X\in C(\mathcal{E})$, we have $\|\mathtt{T}_{n}X-\mathtt{T}X\|_{L^\infty(\mathcal{E})}\to 0$;
\item[(C2)] for any uniformly bounded sequence $\{X_l:\|X_l\|_{L^\infty}\leq 1\}_{l=1}^\infty\subset C(\mathcal{E})$, the sequence $\{(\mathtt{T}_{n}-\mathtt{T})X_l\}_{l=1}^\infty$ is relatively compact.
\end{enumerate}
Now we show (C1) -- the pointwise convergence of $\widehat{T}_{h,\alpha,n}$ to $T_{h,\alpha}$ a.e. when $h$ is fixed and $n\to\infty$. By a simple bound we have
\begin{align}
&\|\widehat{T}_{h,\alpha,n}X-T_{h,\alpha}X\|_{L^\infty(\mathcal{E})} = \sup_{y\in\MM} |\bP_{n}\widehat{M}_{h,\alpha,n}(y,\cdot)\myP^{y}_{\cdot}X(\cdot)-\bP M_{h,\alpha}(y,\cdot)\myP^{y}_{\cdot}X(\cdot)|\nonumber\\
\leq&\,\sup_{y\in\MM} |\bP_{n}\widehat{M}_{h,\alpha,n}(y,\cdot)\myP^{y}_{\cdot}X(\cdot)-\bP_n \widehat{M}^{(d_{h,\alpha})}_{h,\alpha,n}(y,\cdot)\myP^{y}_{\cdot}X(\cdot)|\label{proof:C1}\\
&+\,\sup_{y\in\MM} |\bP_{n}\widehat{M}^{(d_{h,\alpha})}_{h,\alpha,n}(y,\cdot)\myP^{y}_{\cdot}X(\cdot)-\bP_n M_{h,\alpha}(y,\cdot)\myP^{y}_{\cdot}X(\cdot)|\label{proof:C2}\\
&+\,\sup_{y\in\MM} |\bP_n M_{h,\alpha}(y,\cdot)\myP^{y}_{\cdot}X(\cdot)-\bP M_{h,\alpha}(y,\cdot)\myP^{y}_{\cdot}X(\cdot)|,\label{proof:C3}
\end{align}
where $\widehat{M}^{(d_{h,\alpha})}_{h,\alpha,n}(x,y) :=\,\frac{K_{h,\alpha}(x,y)}{\widehat{d}_{h,\alpha,n}(x)}\in C(\MM\times\MM)$. 

Rewrite (\ref{proof:C3}) as $\sup_{W\in X\circ \mathcal{M}_{h,\alpha}}\|\bP_{n}W-\bP W\|_{\RR^m}$. Since $u_i$ preserves the inner product structure, by Lemma \ref{class:glivenko-cantelli}, (\ref{proof:C3}) converges to 0 a.s. when $n\to \infty$. Next, by a direct calculation and the bound in Lemma \ref{lemma:boundsForAllKindsOfK}, we have
\begin{align}
&\sup_{y\in\MM} |\bP_{n}\widehat{M}_{h,\alpha,n}(y,\cdot)\myP^{y}_{\cdot}X(\cdot)-\bP_n \widehat{M}^{(d_{h,\alpha})}_{h,\alpha,n}(y,\cdot)\myP^{y}_{\cdot}X(\cdot)| \leq \|X\|_{L^\infty}\sup_{x,y\in\MM} \left|\frac{\widehat{K}_{h,\alpha,n}(x,y)-K_{h,\alpha}(x,y)}{\widehat{d}_{h,\alpha,n}(x)}\right|\nonumber\\
\leq&\,\|X\|_{L^\infty}\frac{\|K\|_{L^\infty}^{2\alpha}}{\delta}\sup_{x,y\in\MM}|\widehat{K}_{h,\alpha,n}(x,y)-K_{h,\alpha}(x,y)| \nonumber\\
\leq&\, \|X\|_{L^\infty}\frac{\|K\|_{L^\infty}^{2\alpha+1}}{\delta}\sup_{x,y\in\MM}\left|\frac{1}{\widehat{p}^\alpha_{h,n}(x)\widehat{p}^\alpha_{h,n}(y)}-\frac{1}{p^\alpha_{h}(x) p^\alpha_{h}(y)}\right|\nonumber\\
\leq&\,\frac{2\|X\|_{L^\infty}\|K\|_{L^\infty}^{2\alpha+1}}{\delta^{\alpha+1}}
\sup_{y\in\MM} \left|\frac{1}{\widehat{p}^\alpha_{h,n}(y)}-\frac{1}{p^\alpha_{h}(y)}\right|\leq \frac{2\|X\|_{L^\infty}\|K\|_{L^\infty}^{2\alpha+1}}{\delta^{3\alpha+1}}
\sup_{y\in\MM} \left| \widehat{p}^\alpha_{h,n}(y) - p^\alpha_{h}(y) \right|\nonumber \\
\leq&\, \frac{2\alpha\|X\|_{L^\infty}\|K\|_{L^\infty}^{2\alpha+1}}{\delta^{2\alpha+2}} \sup_{f\in \mathcal{K}_h}\|(\bP_n f) -(\bP f) \| \nonumber,
\end{align}
where the last inequality holds due to the fact that when $A,B\geq c>0$, $|A^\alpha-B^\alpha|\leq \frac{\alpha}{c^{1-\alpha}}|A-B|$ and $\widehat{p}_{h,n}(y), p_{h}(y)>\delta$ by Lemma \ref{lemma:boundsForAllKindsOfK}. Note that since $h$ is fixed, $\delta$ is fixed. Thus, the term (\ref{proof:C1}) converges to $0$ a.s. as $n\to \infty$ by Lemma \ref{class:glivenko-cantelli}.
The convergence of (\ref{proof:C2}) follows the same line:
\begin{align}
&\sup_{y\in\MM}|\bP_n \widehat{M}^{(d_{h,\alpha})}_{h,\alpha,n}(y,\cdot)\myP^{y}_{\cdot}X(\cdot)-\bP_n M_{h,\alpha}(y,\cdot)\myP^{y}_{\cdot}X(\cdot)| 
\leq \|X\|_{L^\infty}\|K\|_{L^\infty}\sup_{x\in\MM}\left| \frac{1}{\widehat{d}_{h,\alpha,n}(x)}-\frac{1}{d_{h,\alpha}(x)}\right|\nonumber\\
\leq &  \|X\|_{L^\infty}\|K\|^{3-2\alpha}_{L^\infty}\sup_{x\in\MM}| \widehat{d}_{h,\alpha,n}(x)-d_{h,\alpha}(x)|,\nonumber
\end{align}
where the last term is bounded by
\begin{align}
&\sup_{x\in\MM}| \widehat{d}_{h,\alpha,n}(x)-d_{h,\alpha}(x)| \leq \sup_{x\in\MM}|  \widehat{d}_{h,\alpha,n}(x)-\widehat{d}^{(p_h)}_{h,\alpha,n}(x)|+\sup_{x\in\MM}| \widehat{d}^{(p_h)}_{h,\alpha,n}(x)-d_{h,\alpha}(x)|\nonumber\\
\leq&\,\frac{\|K\|_{L^\infty}}{\delta^{3\alpha}}\sup_{x\in\MM}| \widehat{p}^\alpha_{h,n}(x)-p^\alpha_{h}(x)|+\|K\|_{L^\infty}\sup_{f\in \mathcal{K}_{h,\alpha}}\| \bP_n f-\bP f\|, 
\end{align}
where $\widehat{d}^{(p_h)}_{h,\alpha,n}(x) :=\,\frac{1}{n}\sum_{k=1}^n K_{h,\alpha}(x,x_k)\in C(\MM)$, which again converges to $0$ a.s. as $n\to \infty$ by Lemma \ref{class:glivenko-cantelli}. We thus conclude the pointwise convergence of $\widehat{T}_{h,\alpha,n}$ to $T_{h,\alpha}$ a.e. as $n\to\infty$.

Next we check the condition (C2). 
Since $T_{h,\alpha}$ is compact, the problem is reduced to show that $\widehat{T}_{h,\alpha,n}X_n$ is pre-compact for any given sequence of vector fields $\{X_1,X_2,\ldots\}\subset C(\mathcal{E})$ so that $\|X_l\|_{L^\infty}\leq1$ for all $l\in\NN$. We count on the Arzela-Ascoli theorem \cite[IV.6.7]{Dunford_Schwartz:1958} to finish the proof. By Lemma \ref{lemma:boundsForAllKindsOfK}, a direct calculation leads to
$$
\sup_{n\geq1}\|\widehat{T}_{h,\alpha,n}X_n\|_{L^\infty}=\sup_{n\geq1,y\in\MM}\left|\frac{1}{n}\sum_{j=1}^n\widehat{M}_{h,\alpha,n}(y,x_i)\myP^{y}_{x_i}X_n(x_i)\right|\leq\frac{\|K\|^{2\alpha+1}_{L^\infty}}{\delta^{2\alpha+1}},
$$
which guarantees the uniform boundedness. Next we show the equi-continuity of $\widehat{T}_{h,\alpha,n}X_n$. For a given pair of close points $x\in \MM$ and $y\in \MM$, a direct calculation leads to
\begin{align}
&|\widehat{T}_{h,\alpha,n}X_n(y)-\myP^{y}_{x}\widehat{T}_{h,\alpha,n}X_n(x)| = \left| \bP_{n}\widehat{M}_{h,\alpha,n}(y,\cdot)\myP^{y}_{\cdot}X_n(\cdot)-\myP^{y}_{x}\bP_{n}\widehat{M}_{h,\alpha,n}(x,\cdot)\myP^{x}_{\cdot}X_n(\cdot)\right|\nonumber \\
\leq&\,\|X_n\|_{L^\infty}\sup_{z\in\MM}\left| \bP_{n}\widehat{M}_{h,\alpha,n}(y,z)-\bP_{n}\widehat{M}_{h,\alpha,n}(x,z)\right| \leq \sup_{z\in\MM}\left|\widehat{M}_{h,\alpha,n}(y,z)-\widehat{M}_{h,\alpha,n}(x,z)\right|\nonumber\\
\leq&\, \frac{\|K\|^{4\alpha}_{L^\infty}}{\delta^2}\sup_{z\in\MM}\left|\widehat{d}_{h,\alpha,n}(y)\widehat{K}_{h,\alpha,n}(x,z)-\widehat{d}_{h,\alpha,n}(x)\widehat{K}_{h,\alpha,n}(y,z)\right|\nonumber\\
\leq&\,\frac{\|K\|^{4\alpha+1}_{L^\infty}}{\delta^{2+2\alpha}}\Big( \sup_{z\in\MM}\left|\widehat{K}_{h,\alpha,n}(x,z)-\widehat{K}_{h,\alpha,n}(y,z)\right| +  \big|\widehat{d}_{h,\alpha,n}(y)-\widehat{d}_{h,\alpha,n}(x)\big|\Big)\nonumber\\
\leq&\,\frac{\|K\|^{4\alpha+1}_{L^\infty}}{\delta^{2+2\alpha}}\Big(\sup_{z\in\MM}\left| \widehat{K}_{h,\alpha,n}(x,z)-\widehat{K}_{h,\alpha,n}(y,x)\right|+|\widehat{d}_{h,\alpha,n}(y)-\widehat{d}^{(p_n)}_{h,\alpha,n}(y)|\nonumber\\
&\qquad\qquad\qquad+|\widehat{d}^{(p_n)}_{h,\alpha,n}(y)-\widehat{d}^{(p_n)}_{h,\alpha,n}(x)|+|\widehat{d}^{(p_n)}_{h,\alpha,n}(x)-\widehat{d}_{h,\alpha,n}(x)|\Big)\nonumber\\
\leq&\,\frac{\|K\|^{4\alpha+1}_{L^\infty}}{\delta^{2+2\alpha}}\Big(\sup_{z\in\MM}\left|\widehat{K}_{h,\alpha,n}(x,z)-\widehat{K}_{h,\alpha,n}(y,z)\right| +|\widehat{d}^{(p_n)}_{h,\alpha,n}(y)-\widehat{d}^{(p_n)}_{h,\alpha,n}(x)|+2\|\widehat{d}^{(p_h)}_{h,\alpha,n}-\widehat{d}_{h,\alpha,n}\|_{L^\infty}\Big)\nonumber,
\end{align}
where the last term is further controlled by $|\widehat{d}^{(p_n)}_{h,\alpha,n}(y)-\widehat{d}^{(p_n)}_{h,\alpha,n}(x)|\leq \sup_{z\in\MM}|K_{h,\alpha}(y,z)-K_{h,\alpha}(x,z)|$,
\begin{align}
&\,\sup_{z\in\MM}\left|\widehat{K}_{h,\alpha,n}(x,z)-\widehat{K}_{h,\alpha,n}(y,z)\right|\nonumber\\
\leq\,& \sup_{z\in\MM}\frac{1}{\widehat{p}^\alpha_{h,n}(x)\widehat{p}^\alpha_{h,n}(y)\widehat{p}^\alpha_{h,n}(z)}\left|\widehat{p}^\alpha_{h,n}(y)K_h(x,z)-\widehat{p}^\alpha_{h,n}(x)K_h(y,z)\right|\nonumber\\
\leq\,& \frac{1}{\delta^{3\alpha}}\left(\sup_{z\in\MM}\widehat{p}^\alpha_{h,n}(y)|K_h(x,z)-K_h(y,z)|+\sup_{z\in\MM} K_h(y,z)|\widehat{p}^\alpha_{h,n}(y)-\widehat{p}^\alpha_{h,n}(x)|\right)\nonumber\\
\leq\,&\frac{\|K\|_{L^\infty}}{\delta^{3\alpha}}\left(\sup_{z\in\MM} |K_h(x,z)-K_h(y,z)|+\sup_{z\in\MM}  \frac{\alpha}{\delta^{1-\alpha}}|\widehat{p}_{h,n}(y)-\widehat{p}_{h,n}(x)|\right)\nonumber\\
\leq\,&\frac{\|K\|_{L^\infty}}{\delta^{3\alpha}}\left(\sup_{z\in\MM} |K_h(x,z)-K_h(y,z)|+\frac{\alpha}{\delta^{1-\alpha}}\sup_{z\in\MM}  |K_{h}(y,z)-K_{h}(x,z)|\right)\nonumber, 
\end{align} 
and similarly
\begin{align*}
& \|\widehat{d}^{(p_h)}_{h,\alpha,n}-\widehat{d}_{h,\alpha,n}\|_{L^\infty}=\sup_{z\in\MM}\left|\frac{1}{n}\sum_{k=1}^n\left(\frac{K_h(z,x_k)}{\widehat{p}^\alpha_{h,n}(z)\widehat{p}^\alpha_{h,n}(x_k)}-\frac{K_h(z,x_k)}{p^\alpha_h(z)p^\alpha_h(x_k)}\right)\right|\\
\leq\,&\frac{\|K\|_{L^\infty}}{\delta^\alpha}\sup_{z\in\MM} \left|\frac{1}{\widehat{p}^\alpha_{h,n}(z)\widehat{p}^\alpha_{h,n}(x_k)}-\frac{1}{p^\alpha_h(z)p^\alpha_h(x_k)}\right|\leq \frac{\|K\|_{L^\infty}}{\delta^{3\alpha}} \sup_{z\in\MM}\left| \widehat{p}^\alpha_{h,n}(z) - p^\alpha_h(z) \right|\\
\leq\,& \frac{\|K\|_{L^\infty}}{\delta^{3\alpha}} \frac{\alpha}{\delta^{1-\alpha}} \left\| \widehat{p}_{h,n}- p_h \right\|_{L^\infty}.
\end{align*}
As a result, we have the following bound
\begin{align*}
|\widehat{T}_{h,\alpha,n}X_n(y)-\myP^{y}_{x}\widehat{T}_{h,\alpha,n}X_n(x)| 
\leq \frac{\|K\|^{4\alpha+2}_{L^\infty}}{\delta^{2+5\alpha}}\left( \big(1+\frac{\alpha}{\delta^{1-\alpha}}\big) \sup_{z\in\MM} |K_h(x,z)-K_h(y,z)|\right.\\
\qquad \left.+\frac{\delta^{3\alpha}}{\|K\|_{L^\infty}}\sup_{z\in\MM}|K_{h,\alpha}(y,z)-K_{h,\alpha}(x,z)|+ \frac{\alpha}{\delta^{1-\alpha}} \left\| \widehat{p}_{h,n}- p_h \right\|_{L^\infty}\right).
\end{align*}
Thus, when $y\to x$, $\sup_{z\in\MM} |K_h(x,z)-K_h(y,z)|$ and $\sup_{z\in\MM}|K_{h,\alpha}(y,z)-K_{h,\alpha}(y,z)|$ both converge to $0$ since $K_{h}$ and $K_{h,\alpha}$ are both continuous. Also, $\left\| \widehat{p}_{h,n}- p_h \right\|_{L^\infty}$ converges to $0$ a.s. as $n\to\infty$ by the Glivenko-Cantali property; that is, for a given small $\epsilon>0$, we can find $N>0$ so that $\left\| \widehat{p}_{h,n}- p_h \right\|_{L^\infty}\leq \epsilon$ a.s. for all $n\geq N$. Thus, by the Arzela-Ascoli theorem, we have the compact convergence of $\widehat{T}_{h,\alpha,n}$ to $T_{h,\alpha}$ a.s. when $n\to \infty$.

} 

Since the compact convergence implies the spectral convergence (see \cite{Chatelin:2011} or Proposition 6 in \cite{VonLuxburg2008}), we get the spectral convergence of $\widehat{T}_{h,\alpha,n}$ to $T_{h,\alpha}$ almost surely when $n\to \infty$.
\newline\newline
\underline{\textbf{Step 3: Spectral convergence of $T^{t/h}_{h,1}$ to $e^{t\nabla^2}$ and $h^{-1}(T_{h,1}-1)$ to $\nabla^2$ in $L^2(\mathcal{E})$ as $h\to 0$.}}

First we consider the case when $\partial\MM=\emptyset$. We assume $\frac{\mu_2}{d}=1$ to simplify the notation. Denote $-\lambda_i$, where $\lambda_i>0$, to be the $i$-th eigenvalue of the connection Laplacian $\nabla^2$ with the associated eigen-vector field $X_i$. Order $\lambda_i$ so that it increases as $i$ increases.  Fix $l_0\geq 0$. For all $x\in\MM$, by Proposition \ref{thm:pointwise_conv_to_integral} we have uniformly
\[
\frac{T_{\epsilon,1}X_l(x)-X_l(x)}{h}=\nabla^2X_l(x)+O(h),
\]
where $O(h)$ depends on $\|X^{(k)}_l\|_{L^\infty(\mathcal{E})}$, where $k=0,1,2,3$. By the Sobolev embedding theorem \cite[Theorem 9.2]{Palais1968}, for all $l\leq l_0$ we have
\[
\| X^{(3)}_l\|_{L^\infty(\mathcal{E})} \lesssim \|X_l\|_{H^{d/2+4}(\mathcal{E})} \lesssim \left(1+\| (\nabla^2)^{d/4+2}X_l\|_{L^2(\mathcal{E})}\right) =1+ \lambda_l^{d/4+2}\leq 1+\lambda_{l_0}^{d/4+2},
\]
where we choose $d/2+4$ for convenience. Thus, in the $L^2$ sense, for all $l\leq l_0$ 
\begin{equation}\label{Ieps00}
\left\|\frac{T_{h,1}X_l-X_l}{h}-\nabla^2X_l\right\|_{L^2(\mathcal{E})} =  O\left(\left(1+\lambda_{l_0}^{d/4+2}\right)h\right).
\end{equation}
In addition to $h<\frac{1}{2\lambda_{l_0}}$, if we choose $h$ so that $h\left(1+\lambda_{l_0}^{d/4+2}\right)\leq h^{1/2}$ and $h\left(1+\lambda_{l_0+1}^{d/4+2}\right)>h^{1/2}$, which is equivalent to $\lambda_{l_0}\leq h^{-2/(d+8)}<\lambda_{l_0+1}$, we reach the fact that
$$
 \left\| \frac{T_{h,1}-1}{h} -\nabla^2 \right\|_{L^2(\mathcal{E})} =O\left( h^{1/2}\right)
$$
on $\overline{\oplus_{k\leq l_0} E_k}$. Thus, on the finite dimensional subspace $\overline{\oplus_{k\leq l_0} E_k}$, as $h\to 0$, $h^{-1}(T_{h,1}-1)X$ spectrally converges to $\nabla^2X$ in the boundary-free case.

Next we show $T_{h,1}^{t/h}$ converges to $e^{t\nabla^2}$ for $t>0$ as $h\to 0$. Note that we have from (\ref{Ieps00})
\begin{equation}\label{Ieps1}
\|T_{h,1}-I-h\nabla^2\|=O\left(\lambda_{l_0}^{d/4+2}h^2\right)
\end{equation}
on $\overline{\oplus_{k\leq l_0} E_k}$. 
When $h<\frac{1}{2\lambda_{l_0}}$, $I+h\nabla^2$ is invertible on $\overline{\oplus_{k\leq l} E_k}$ with norm $\frac{1}{2}\leq\|I+h\nabla^2\|<1$. So, by the binomial expansion, for all $l\leq l_0$ we have
$$
(I+h\nabla^2)^{t/h}X_l=(1-t\lambda_l+t^2\lambda_l^2/2-th\lambda_l^2/2+\ldots)X_l.
$$
On the other hand, we have
$$
e^{t\nabla^2}X_l=(1-t\lambda_l+t^2\lambda_l^2/2+\ldots)X_l.
$$
Therefore, when $ h<\frac{1}{2\lambda_{l_0}}$ we have on $\overline{\oplus_{k\leq l_0} E_k}$
\begin{align}
e^{t\nabla^2}=(I+h\nabla^2)^{t/h}+O(\lambda_{l_0}^2th)\label{Ieps2}.
\end{align}
Now we put the above together. Take $ h<\frac{1}{2\lambda_{l_0}}$ small enough so that
\begin{align}
\|T_{h,1}-I-h\nabla^2\|=O\left(\lambda_{l_0}^{d/4+2}h^2\right)\leq 1/2.\label{spec_conv:assumption_for_h}
\end{align}
Then we have for $X\in E_{l_0}$ 
\begin{align*}
\left\|(T^{t/h}_{h,1}-e^{t\nabla^2})X\right\|&=
\left\|\left(\left(I+h\nabla^2+O\left(\lambda_{l_0}^{d/4+2}h^2\right)\right)^{t/h}-\left(I+h\nabla^2\right)^{t/h}-O\left(\lambda_{l_0}^2th\right)\right)X\right\|\\
&= (1-h\lambda_{l_0})^{t/h}\left|\left[1+O\left(\left(1-h\lambda_{l_0}\right)^{-1}\lambda_{l_0}^{d/4+2}h^2\right) \right]^{t/h}-1-O\left(\left(1-h\lambda_{l_0}\right)^{-1}\lambda_{l_0}^2th\right) \right|\\
&\leq \left|\left[1+O\left(\left(1+h\lambda_{l_0}\right)^{-1}\lambda_{l_0}^{d/4+2}h^2\right) \right]^{t/h}-1-O\left(\left(1-h\lambda_{l_0}\right)^{-1}\lambda_{l_0}^2th\right) \right|\\
&=  O\left(\lambda_{l_0}^{d/4+2}th\right) ,
\end{align*}
where the first equality comes from (\ref{Ieps1}) and (\ref{Ieps2}), the second inequality comes from the fact that $h<\frac{1}{2\lambda_{l_0}}$, and the last inequality comes from the binomial expansion, that is, $\left[1+O\left(\left(1+h\lambda_{l_0}\right)^{-1}\lambda_{l_0}^{d/4+2}h^2\right) \right]^{t/h}\approx 1+O\left(\left(1+h\lambda_{l_0}\right)^{-1}\lambda_{l_0}^{d/4+2}th\right) $ when $h$ and $O\left(\left(1+h\lambda_{l_0}\right)^{-1}\lambda_{l_0}^{d/4+2}h^2\right)$ are small enough, and the fact that $\lambda_{l_0}^{d/4+2}h >\lambda_{l_0}^{2}h$ when $l_0$ is large enough, i.e., when $\lambda_{l_0}>1$. Thus, over $\overline{\oplus_{k\leq l_0} E_k}$, we have $\|T^{t/h}_{h,1}-e^{t\nabla^2}\|=O\left(\lambda_{l_0}^{d/4+2}th\right) $.
Furthermore, in addition to $h<\frac{1}{2\lambda_{l_0}}$, if we choose $h$ so that $h\lambda_{l_0}^{d/4+2}\leq h^{1/2}$ and $h\lambda_{l_0+1}^{d/4+2}>h^{1/2}$, which is equivalent to $\lambda_{l_0}\leq h^{-2/(d+8)}<\lambda_{l_0+1}$, we reach the fact that
$$
\|T^{t/h}_{h,1}-e^{t\nabla^2}\|=O\left(th^{1/2}\right)
$$
on $\mathcal{H}_h:=\overline{\oplus_{l: \lambda_l<h^{-2/(d+8)}} E_l}$. Thus, we conclude the spectral convergence of $T^{t/h}_{h,1}$ while the boundary is empty.

When $\partial\MM\neq\emptyset$, the proof is essentially the same, except that we have to take the boundary effect (\ref{bdryrslt3}) and the Neuman's condition into account. Here we give the proof quickly and only show the main different parts. We adapt the same notations as those used in the boundary-free case. By Proposition \ref{thm:pointwise_conv_to_integral} and the Sobolev embedding theorem, for $x\in\MM\backslash \MM_{h^\gamma}$, we have 
\[
 T_{\epsilon,1}X_l(x)-X_l(x) =h\nabla^2X_l(x)+O\left(\lambda_{l_0}^{d/4+2}h^2\right);
\]
for $x\in\MM_{h^\gamma}$, by the Neuman's condition, we have
\[
T_{h,1} X_l(x)=\myP^{x}_{x_0}X(x_0)+O\left(\lambda_{l_0}^{d/4+3/2}h^{2\gamma}\right).
\]
Thus, we have on $\overline{\oplus_{k\leq l_0} E_k}$
$$
\left\|\frac{T_{h,1}-I}{h}- \nabla^2\right\|=O\left(\lambda_{l_0}^{d/4+2}h^{5\gamma/2-1}\right),
$$ 
where $5\gamma/2-1>0$ since $\gamma>2/5$.
Thus, by the same argument, if we choose $h$ much smaller so that $h^{5\gamma/2-1}\lambda_{l_0}^{d/4+2}\leq h^{4\gamma/5-1/2}$ and $h^{5\gamma/2-1}\lambda_{l_0}^{d/4+2}>h^{5\gamma/4-1/2}$, which is equivalent to $\lambda_{l_0}\leq h^{-(5\gamma/4-1/2)/(d/4+2)}<\lambda_{l_0+1}$, we reach the conclusion when the boundary is not empty.
\newline\newline
\underline{\textbf{Final step: Putting everything together.}}

We now finish the proof of Theorem \ref{thm:spectralconvergence_heatkernel} here. Fix $i$ and denote $\mu_{t,i,h}$ to be the $i$-th eigenvalue of $T_{h,1}$ with the associated eigenvector $Y_{t,i,h}$.
By Step 1, we know that all the eigenvalues inside $(-1/h,0]$ of $\widehat{T}_{h,1,n}$ and $\vD_{h,1,n}^{-1}\vP_{h,1,n}$ are the same and their eigenvectors are related. By Step 2, since we have the spectral convergence of $\widehat{T}_{h,1,n}$ to $T_{h,1}$ almost surely as $n\to\infty$, for each $j\in\NN$ large enough, we have by the definition of convergence in probability that for $h_j=1/j$, we can find $n_j\in\NN$ so that
$$
P\{ \|Y_{t,i,h_j}-Y_{t,i,h_j,n_j} \|_{L^2(\mathcal{E})}\geq 1/j\}\leq 1/j. 
$$   
Take $n_j$ as an increasing sequence. By step 3(A) (or Step 3(B) if $\partial\MM\neq \emptyset$), for each $j\in\NN$, there exists $j'>0$ so that
$$
\|Y_{t,i}-Y_{t,i,h_{j'}}\|_{L^2(\mathcal{E})}< 1/2j' .
$$
Arrange $j'$ as an increasing sequence. Similar statements hold for $\mu_{t,i,h}$. Thus, for all $j\in \NN$ large enough, there exists $j'\in\NN$ and hence $n_{j'}\in\NN$ so that 
\[
P\{ \|Y_{t,i}-Y_{t,i,h_{j'},n_{j'}} \|_{L^2(\mathcal{E})}\geq 1/j\}\leq P\{ \|Y_{t,i,h_{j'}}-Y_{t,i,h_{j'},n_{j'}} \|_{L^2(\mathcal{E})}\geq 1/2j'\}1/2j'.
\]
Therefore we conclude the convergence in probability. Since the proof for Theorem \ref{thm:spectralconvergence_laplacian} is the same, we skip it.
\end{proof}

\section{Extract more Topological/Geometric Information from a Point Cloud}\label{section:extract_more_info}

In Section \ref{section:review}, we understand VDM under the assumption that we have an access to the principal bundle structure of the manifold. However, in practice the knowledge of the bundle structure is not always available and we may only have access to the point cloud sampled from the manifold. Is it possible to obtain any principal bundle under this situation? The answer is yes if we restrict ourselves to a special principal bundle, the frame bundle.

We summarize the proposed reconstruction algorithm considered in \cite{singer_wu:2012} below. Take a point cloud $\mathcal{X}=\{x_i\}_{i=1}^n$ sampled from $\MM$ under Assumption \ref{Assumption:A} (A1), Assumption \ref{Assumption:B} (B1) and Assumption \ref{Assumption:B} (B2). The algorithm consists of the following three steps:
\begin{enumerate}
\item[(Step a)] Reconstruct the frame bundle from $\mathcal{X}$.
It is possible since locally a manifold can be well approximated by an affine space up to second order \cite{little_jung_maggioni:2009,Wang_Jiang_Wu_Zhou:2011,singer_wu:2012,Gong_Zhao_Medioni:2012,Kaslovsky_Meyer:2014,Wang_Slavakis_Lerman:2014,Arias-Castro_Lerman_Zhang:2014}. Thus, the embedded tangent bundle is estimated by local principal component analysis (PCA) with the kernel bandwidth $h_{\textup{pca}}>0$. Indeed, the top $d$ eigenvectors, $v_{x,k}\in\RR^p$, $k=1,\ldots,d$, of the covariance matrix of the dataset near $x\in\MM$, $\mathcal{N}_x:=\{x_j\in\mathcal{X};\,\|x-x_j\|_{\RR^p}\leq \sqrt{h_{\textup{pca}}}\}$, are chosen to form the estimated basis of the embedded tangent plane $\iota_*T_{x}\MM$. Denote $O_x$ to be a $p\times d$ matrix, whose $k$-th column is $v_{x,k}$. Note that $x$ may or may not be in $\mathcal{X}$.
Here $O_x$ can be viewed as an estimation of a point $u_x$ of the frame bundle such that $\pi(u_x)=x$. When $x=x_i\in \mathcal{X}$, we use $O_i$ to denote $O_{x_i}$. See \cite{singer_wu:2012} for details.
\item[(Step b)] Estimate the parallel transport between tangent planes by aligning $O_x$ and $O_y$ by
\begin{equation*}
O_{x,y} = \argmin_{O \in O(d)} \|O - O_x^T O_y\|_{HS}\in O(d),
\end{equation*}
where $\| \cdot \|_{HS}$ is the Hilbert-Schmidt norm. It is proved that $O_{x,y}$ is an approximation of the parallel transport from $y$ to $x$ when $x$ and $y$ are close enough in the following sense \cite[(B.6)]{singer_wu:2012}:
\begin{align*} 
O_{x,y}\overline{X}_y\approx O_x^T\iota_*\myP^{x}_{y}X(y),
\end{align*}
where $X\in C(T\MM)$ and $\overline{X}_y=O_y^T\iota_*X(y)\in \RR^d$ is the coordinate of $X(y)$ with related to the estimated basis. Note that $x$ and $y$ may or may not be in $\mathcal{X}$. When $x=x_i\in\mathcal{X}$ and $y=x_j\in\mathcal{X}$, we use $O_{ij}$ to denote $O_{x_i,x_j}$ and $\overline{X}_j$ to denote $\overline{X}_{x_j}$;
\item[(Step c)] Build GCL mentioned in Section \ref{section:review} based on the connection graph  from $\mathcal{X}$ and $\{O_{ij}\}$. We build up a block matrix $\vP^{\textup{O}}_{h,\alpha,n}$ with $d\times d$ entries, where $h>h_{\text{pca}}$:
\begin{equation*}
\vP^{\textup{O}}_{h,\alpha,n}(i,j) = \left\{\begin{array}{lcl}
                  \widehat{K}_{h,\alpha,n}(x_i,x_j)O_{ij} &  & (i,j)\in E, \\
                  0_{d\times d} &  & (i,j)\notin E,
                \end{array}
 \right.
\end{equation*}
where $0\leq \alpha\leq 1$ and the kernel $K$ satisfies Assumption \ref{Assumption:K}, and a $n\times n$ diagonal block matrix $\vD_{h,\alpha,n}$ with $d\times d$ entries defined in (\ref{def:Dbundle_all}). Denote operators $O^T_{\mathcal{X}}:T\MM_{\mathcal{X}}\to V_{\mathcal{X}}$, $O_{\mathcal{X}}:V_{\mathcal{X}}\to T\MM_{\mathcal{X}}$
\begin{align*}
&O_{\mathcal{X}}\vv:=[\iota^T_*O_1\vv[1],\ldots \iota^T_*O_n\vv[n]]\in T\MM_{\mathcal{X}},\nonumber\\ 
&O_{\mathcal{X}}^T\vw:=[(O^T_1\iota_*\vw[1])^T,\ldots, (O^T_n\iota_*\vw[n])^T]^T\in V_{\mathcal{X}},\nonumber 
\end{align*}
where $\vw\in T\MM_{\mathcal{X}}$ and $\vv\in V_{\mathcal{X}}$. Here $V_{\mathcal{X}}$ means the coordinates of a set of embedded tangent vectors on $\MM$ with related to the estimated basis of the embedded tangent plane. The pointwise convergence of GCL has been shown in \cite[Theorem 5.3]{singer_wu:2012}; that is, a.s. we have
\begin{align*}
&\lim_{h\rightarrow0}\lim_{n\rightarrow\infty}\frac{1}{h}(\vD_{h,\alpha,n}^{-1}\vP^{\textup{O}}_{h,\alpha,n}\bvX-\bvX)[i] = \frac{\mu^{(0)}_{1,2}}{2d}O_i^T\iota_*\left\{\nabla^2X(x_i)+\frac{2\nabla X(x_i)\cdot\nabla(\sfp^{1-\alpha})(x_i)}{\sfp^{1-\alpha}(x_i)}\right\},
\end{align*}
where $X\in C^4(T\MM)$ and $\bvX=O_{\mathcal{X}}^T\delta_{\mathcal{X}}X$. This means that by taking $\alpha=1$, we reconstruct the connection Laplacian associated with the tangent bundle $T\MM$.
\end{enumerate}
Note that the errors introduced in (a) and (b) may accumulate and influence spectral convergence of the GCL. 
In this section we study the spectral convergence under this setup which answers our question in the beginning and affirms that we are able to extract further geometric/topological information simply from the point cloud. 

\begin{definition}
Define operators $\widetilde{T}^{\textup{O}}_{h,\alpha,n}:C(T\MM)\to C(T\MM)$ as
\begin{align}
\widetilde{T}^{\textup{O}}_{h,\alpha,n}X(y)&=\,\iota_*^TO_y\frac{1}{n}\sum_{j=1}^n \widehat{M}_{h,\alpha,n}\left(y,x_j\right)O_{y,x_j}O_j^T\iota_*X(x_j).\nonumber
\end{align}
\end{definition}

The main result of this section is the following spectral convergence theorems stating the spectral convergence of $( \vD_{h,1,n}^{-1}\vP^{\textup{O}}_{h,1,n})^{t/h}$ to $e^{t\nabla^2}$ and $h^{-1}( \vD_{h,1,n}^{-1}\vP^{\textup{O}}_{h,1,n}-\vI_{dn})$ to $\nabla^2$. Note that except the estimated parallel transport, the statements of Theorem \ref{thm:spec_conv_pointcloud_heatkernel} and Theorem \ref{thm:spec_conv_pointcloud_laplacian} are the same as those of Theorem \ref{thm:spectralconvergence_heatkernel} and Theorem \ref{thm:spectralconvergence_laplacian}.

\begin{theorem}\label{thm:spec_conv_pointcloud_heatkernel}
Assume Assumption \ref{Assumption:A} (A1), Assumption \ref{Assumption:B} (B1), Assumption \ref{Assumption:B} (B2) and Assumption \ref{Assumption:K} hold. Estimate the parallel transport and construct the GCL by Step a, Step b and Step c. 
Fix $t>0$. Denote $\widetilde{\mu}_{t,i,h,n}$ to be the $i$-th eigenvalue of
$(\widetilde{T}^{\textup{O}}_{h,1,n})^{t/h}$
with the associated eigenvector $\widetilde{Y}_{t,i,h,n}$. Also denote $\mu_{t,i}>0$ to be the $i$-th eigenvalue of the heat kernel of the connection Laplacian $e^{t\nabla^2}$ with the associated eigen-vector field $Y_{t,i}$. We assume that both $\mu_{t,i,h,n}$ and $\mu_{t,i}$ decrease as $i$ increase, respecting the multiplicity. Fix $i\in\NN$. Then there exists a sequence $h_n\to 0$ such that $\lim_{n\to \infty}\widetilde{\mu}_{t,i,h_n,n}=\mu_{t,i}$ and $\lim_{n\to \infty}\|\widetilde{Y}_{t,i,h_n,n}-Y_{t,i}\|_{L^2(T\MM)}=0$ in probability.
\end{theorem}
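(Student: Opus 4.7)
The plan is to follow the three-step structure of the proof of Theorem \ref{thm:spectralconvergence_heatkernel}, reducing the new statement to the one already proved by treating $\widetilde{T}^{\textup{O}}_{h,1,n}$ as a perturbation of $\widehat{T}_{h,1,n}$ in which the true parallel transport $\myP^{y}_{x_j}$ has been replaced by the PCA-alignment surrogate $\iota_*^{T} O_y O_{y,x_j} O_j^{T} \iota_*$. The first step is to verify that eigenpairs of the finite-dimensional operator $\vD_{h,1,n}^{-1}\vP^{\textup{O}}_{h,1,n}$ with eigenvalues in the appropriate range correspond to eigenpairs of $\widetilde{T}^{\textup{O}}_{h,1,n}$ acting on $C(T\MM)$ via the coordinate map $\bvX = O_{\mathcal{X}}^{T}\delta_{\mathcal{X}}X$. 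This follows from the identity
\begin{equation*}
\bigl(O_{\mathcal{X}}^{T}\delta_{\mathcal{X}}\widetilde{T}^{\textup{O}}_{h,1,n}X\bigr)[i] = \bigl(\vD_{h,1,n}^{-1}\vP^{\textup{O}}_{h,1,n}\bvX\bigr)[i],
\end{equation*}
which is a direct analog of the bijection established in Step 1 of the previous proof, and the reasoning there carries over since $O_i^{T}\iota_*$ is an isometry between $T_{x_i}\MM$ and its image in $\RR^d$.

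The heart of the argument is to upgrade this to a compact convergence statement: for each fixed $h>0$, $\widetilde{T}^{\textup{O}}_{h,1,n}\to T_{h,1}$ compactly as $n\to\infty$, almost surely. I would split
\begin{equation*}
\widetilde{T}^{\textup{O}}_{h,1,n}-T_{h,1} = \bigl(\widetilde{T}^{\textup{O}}_{h,1,n}-\widehat{T}_{h,1,n}\bigr) + \bigl(\widehat{T}_{h,1,n}-T_{h,1}\bigr).
\end{equation*}
The second term converges compactly by Step 2 of the proof of Theorem \ref{thm:spectralconvergence_heatkernel}. For the first term, the essential input is the pointwise bound of Singer--Wu (cited as (B.6) in the excerpt), which estimates $\bigl\|\iota_*^{T}O_y O_{y,x_j}O_j^{T}\iota_* X(x_j) - \myP^{y}_{x_j}X(x_j)\bigr\|$ in terms of the geodesic distance $d(y,x_j)$ and the PCA bandwidth $h_{\textup{pca}}$. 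Because $\widehat{M}_{h,1,n}(y,\cdot)$ is effectively supported on pairs with $d(y,x_j) = O(\sqrt{h})$, averaging this pointwise bound against the kernel yields a uniform operator-norm estimate $\|\widetilde{T}^{\textup{O}}_{h,1,n}-\widehat{T}_{h,1,n}\|_{L^\infty(T\MM)\to L^\infty(T\MM)}\to 0$ as $n\to\infty$, provided $h_{\textup{pca}} = h_{\textup{pca}}(n)\to 0$ fast enough to compensate for the finite-sample PCA bias. Equi-continuity of $\widetilde{T}^{\textup{O}}_{h,1,n}X_n$ for uniformly bounded sequences $\{X_n\}\subset C(T\MM)$ follows from combining the same PCA continuity bounds with the Arzel\`a--Ascoli argument used in the previous proof.

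Once compact convergence of $\widetilde{T}^{\textup{O}}_{h,1,n}$ to $T_{h,1}$ is established, spectral convergence of $(\widetilde{T}^{\textup{O}}_{h,1,n})^{t/h}$ to $T_{h,1}^{t/h}$ for each fixed $h$ follows from the standard fact that compact convergence implies spectral convergence, and then Step 3 of the earlier proof (spectral convergence of $T_{h,1}^{t/h}$ to $e^{t\nabla^2}$ in $L^2(T\MM)$ as $h\to 0$) applies verbatim, as does the final diagonal subsequence extraction. The main obstacle I foresee is the uniform control of the accumulated PCA and alignment errors: unlike the Glivenko--Cantelli convergence available for $\widehat{T}_{h,1,n}$, the estimator $\widetilde{T}^{\textup{O}}_{h,1,n}$ depends nonlinearly on the sample through the local PCA bases $\{O_i\}$, so proving the required uniform high-probability bound on $\|\widetilde{T}^{\textup{O}}_{h,1,n}-\widehat{T}_{h,1,n}\|$ will force a delicate joint balancing of the kernel bandwidth $h$ and the PCA bandwidth $h_{\textup{pca}}$, keeping the latter small relative to the former while still ensuring enough samples fall in each PCA neighborhood for the tangent plane estimates to be uniformly accurate.
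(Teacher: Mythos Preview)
Your overall structure---treat $\widetilde{T}^{\textup{O}}_{h,1,n}$ as a perturbation of $\widehat{T}_{h,1,n}$ and then invoke the machinery already built for Theorem~\ref{thm:spectralconvergence_heatkernel}---is exactly the paper's strategy, but your key claim about the perturbation term contains a genuine gap. You assert that $\|\widetilde{T}^{\textup{O}}_{h,1,n}-\widehat{T}_{h,1,n}\|_{L^\infty\to L^\infty}\to 0$ as $n\to\infty$ for each \emph{fixed} $h$, once $h_{\textup{pca}}(n)\to 0$. That is not true. Even if the local PCA bases $O_i$ were replaced by exact orthonormal frames $Q_i$ of the embedded tangent planes, the optimal alignment $Q_{ij}=\argmin_{O\in O(d)}\|O-Q_i^TQ_j\|_{HS}$ is \emph{not} the parallel transport $\myP^i_j$: there is a geometric bias, coming from the second fundamental form of $\iota$, of size $O(d(x_i,x_j)^3)$, which after kernel averaging gives $O(h^{3/2})$. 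This bias persists no matter how large $n$ is or how small $h_{\textup{pca}}$ is; driving $h_{\textup{pca}}\to 0$ only removes the \emph{additional} finite-sample PCA fluctuation on top of it. So $\widetilde{T}^{\textup{O}}_{h,1,n}$ does not compactly converge to $T_{h,1}$ at fixed $h$; it converges to an $O(h^{3/2})$-perturbation of $T_{h,1}$.

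The paper avoids this by never claiming compact convergence of $\widetilde{T}^{\textup{O}}_{h,1,n}$ to $T_{h,1}$. Instead it (i) conditions on a high-probability event $\Omega_{n,\alpha_1,\alpha_2,\alpha_3}\cap\Omega_{\sfp}$, obtained from Bernstein bounds on the entries of all the local covariance matrices together with a uniform lower bound on $\widehat{p}_{h,n}$, on which the PCA frames satisfy $O_i^T\iota_*X=Q_i^T\iota_*X+O(h_{\textup{pca}}^{3/2})$ uniformly; (ii) shows that on this event the matrix $O_{\mathcal{X}}^T\vD_{h,1,n}^{-1}\vP^{\textup{O}}_{h,1,n}$ differs from the exact-frame matrix $Q_{\mathcal{X}}^T\vD_{h,1,n}^{-1}\vP_{h,1,n}$ by $O(h^{3/2})$ in operator norm; and (iii) observes that the latter matrix is literally the normalized GCL built from the true frame-bundle data, so the entire proof of Theorem~\ref{thm:spectralconvergence_heatkernel} applies to it verbatim. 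The $O(h^{3/2})$ discrepancy is then handled at the very end by ordinary finite-dimensional eigenvalue/eigenvector perturbation theory, and is absorbed in the diagonal step $h_n\to 0$ (since $h^{3/2}\ll h$). Your argument can be repaired along these same lines---replace the false fixed-$h$ convergence claim by the uniform $O(h^{3/2})$ bound, then use perturbation theory---but as written the compact-convergence step for $\widetilde{T}^{\textup{O}}_{h,1,n}-\widehat{T}_{h,1,n}$ fails.
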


\begin{theorem}\label{thm:spec_conv_pointcloud_laplacian}
Assume Assumption \ref{Assumption:A} (A1), Assumption \ref{Assumption:B} (B1), Assumption \ref{Assumption:B} (B2) and Assumption \ref{Assumption:K} hold. Estimate the parallel transport and construct the GCL by Step a, Step b and Step c. 
Denote $-\widetilde{\lambda}_{i,h,n}$ to be the $i$-th eigenvalue of
$h^{-1}(\widehat{T}^{\textup{O}}_{h,1,n}-1)$
with the associated eigenvector $\widetilde{X}_{i,h,n}$. Also denote $-\lambda_i$, where $\lambda_i>0$, to be the $i$-th eigenvalue of the connection Laplacian $\nabla^2$ with the associated eigen-vector field $X_i$. We assume that both $\lambda_{i,h,n}$ and $\lambda_i$ increase as $i$ increase, respecting the multiplicity. Fix $i\in\NN$. Then there exists a sequence $h_n\to 0$ such that $\lim_{n\to \infty}\widetilde{\lambda}_{i,h_n,n}=\lambda_i$ and $\lim_{n\to \infty}\|\widetilde{X}_{i,h_n,n}-X_i\|_{L^2(T\MM)}=0$ in probability.
\end{theorem}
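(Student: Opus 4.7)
The plan is to mirror the three-step structure used for Theorem \ref{thm:spectralconvergence_laplacian}, substituting the PCA-estimated tangent-plane operators for their intrinsic counterparts, and showing that the additional errors introduced by local PCA and by the alignment $O_{x,y}$ can be controlled alongside the stochastic and bias terms already treated in Section \ref{section:spectral_conv}.

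\textbf{Step 1.} I would first establish the one-to-one correspondence between eigenpairs of $h^{-1}(\vD_{h,1,n}^{-1}\vP^{\textup{O}}_{h,1,n}-\vI_{dn})$ and those of $h^{-1}(\widetilde{T}^{\textup{O}}_{h,1,n}-I)$. Given $\vv\in V_{\mathcal{X}}$ an eigenvector of $\vD_{h,1,n}^{-1}\vP^{\textup{O}}_{h,1,n}$ with eigenvalue $1+h\lambda\in(0,1]$, define
\[
\widetilde{X}_{\vv}(y) = \frac{1}{1+h\lambda}\,\iota_*^{T}O_y\,\frac{1}{n}\sum_{j=1}^n\widehat{M}_{h,1,n}(y,x_j)\,O_{y,x_j}\,\vv[j],
\]
and compute as in (\ref{step1:motivation1}) that $\widetilde{T}^{\textup{O}}_{h,1,n}\widetilde{X}_{\vv}=(1+h\lambda)\widetilde{X}_{\vv}$; conversely $\bvX=O_{\mathcal{X}}^{T}\delta_{\mathcal{X}}X$ is an eigenvector of the matrix whenever $X$ is an eigensection. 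This reduces the spectral analysis to that of the operator $\widetilde{T}^{\textup{O}}_{h,1,n}$ acting on $C(T\MM)$.

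\textbf{Step 2.} I would then prove compact convergence of $\widetilde{T}^{\textup{O}}_{h,1,n}$ to $T_{h,1}$ for fixed $h>0$ almost surely as $n\to\infty$. The argument follows Step 2 of the proof of Theorem \ref{thm:spectralconvergence_laplacian}, but with the extra decomposition
\[
\widetilde{T}^{\textup{O}}_{h,1,n}X-T_{h,1}X = \bigl(\widetilde{T}^{\textup{O}}_{h,1,n}X-\widehat{T}_{h,1,n}X\bigr) + \bigl(\widehat{T}_{h,1,n}X-T_{h,1}X\bigr).
\]
The second bracket is handled exactly as before. For the first bracket, the pointwise estimates in \cite{singer_wu:2012} (specifically, the PCA basis error bound and the alignment estimate \cite[(B.6)]{singer_wu:2012}) give that $\|O_x^T\iota_*\myP^x_yX(y)-O_{x,y}O_y^T\iota_*X(y)\|$ is controlled by $h_{\textup{pca}}$ plus a stochastic fluctuation of order $\sqrt{\log n/(nh_{\textup{pca}}^{d/2})}$, uniformly in $x,y$ and in the equi-continuous family of candidate sections $X$. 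Inserting this into the computation used to verify (C1) and the equi-continuity side of (C2), and choosing $h_{\textup{pca}}=h_{\textup{pca}}(n,h)\to 0$ with $nh_{\textup{pca}}^{d/2}/\log n\to\infty$ and $h_{\textup{pca}}\ll h$, yields pointwise convergence and the Arzel\`a--Ascoli equi-continuity needed for compact, hence spectral, convergence.

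\textbf{Step 3 and assembly.} Step 3 is inherited verbatim from the proof of Theorem \ref{thm:spectralconvergence_laplacian}: $h^{-1}(T_{h,1}-I)$ spectrally converges to $\nabla^2$ in $L^2(T\MM)$ on the finite-dimensional invariant subspace $\overline{\oplus_{k\le l_0}E_k}$ as $h\to 0$, with the same rate $O(h^{1/2})$ (respectively $O(h^{5\gamma/2-1})$ with boundary). The diagonal-sequence argument from the ``Final step'' of the previous proof then extracts $h_n\to 0$ and the corresponding $h_{\textup{pca},n}$ so that the three error sources cancel simultaneously, giving convergence in probability.

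\textbf{Main obstacle.} The delicate point is the joint calibration of the three scales $h_{\textup{pca}}$, $h$, and $n$. The PCA step contributes an error behaving roughly like $h_{\textup{pca}}^{1/2}+\sqrt{\log n/(nh_{\textup{pca}}^{d/2})}$, while the kernel sum on scale $h$ has bias $O(h)$ and variance $O(\sqrt{\log n/(nh^{d/2+2})})$; matching these so that all four vanish faster than the inverse spectral gap $\lambda_{i+1}-\lambda_i$ forces $h_{\textup{pca}}=o(h)$ together with $nh_{\textup{pca}}^{d/2}\to\infty$, which shrinks the admissible window and is the tightest constraint controlling the final sequence $h_n$.
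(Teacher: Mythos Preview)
Your proposal is correct and shares the same three-step architecture with the paper, but Step~2 is executed differently. You plan to re-verify compact convergence of $\widetilde{T}^{\textup{O}}_{h,1,n}$ to $T_{h,1}$ via the splitting $(\widetilde{T}^{\textup{O}}_{h,1,n}-\widehat{T}_{h,1,n})+(\widehat{T}_{h,1,n}-T_{h,1})$ and then redo (C1)--(C2). The paper instead never re-enters the Arzel\`a--Ascoli machinery: it works at the matrix level, conditions on a single event $\Omega_{n,\alpha_1,\alpha_2,\alpha_3}\cap\Omega_{\sfp}$ of probability $\geq 1-O(n^{-2})$, and on that event obtains the uniform operator bound
\[
\big|O_{\mathcal{X}}^{T}\vD_{h,1,n}^{-1}\vP^{\textup{O}}_{h,1,n}\bvX[i]-Q_{\mathcal{X}}^{T}\vD_{h,1,n}^{-1}\vP_{h,1,n}\vX[i]\big|=O(h^{3/2}),
\]
where the $Q_{\mathcal{X}}$-version uses the true parallel transport and therefore is exactly the operator already handled in Theorem~\ref{thm:spectralconvergence_laplacian}. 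Standard eigenvalue/eigenvector perturbation then transfers the spectral conclusion across this $O(h^{3/2})$ gap. The paper's route is shorter because it recycles Theorem~\ref{thm:spectralconvergence_laplacian} wholesale; your route would additionally need to say why $y\mapsto O_y$ is regular enough for the equicontinuity check in (C2), which the paper sidesteps entirely. On the quantitative side, the PCA error the paper uses is $O(h^{3/2})$, obtained at the fixed choice $h_{\textup{pca}}=O(n^{-2/(d+2)})$ (not $h_{\textup{pca}}^{1/2}$ as you write); once $h_{\textup{pca}}$ is so fixed, the local-PCA contribution is already $o(h)$ and the ``three-scale calibration'' you flag as the main obstacle collapses to the same $(h,n)$ balancing already carried out in the Final step of Theorem~\ref{thm:spectralconvergence_laplacian}.
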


The proofs of Theorem \ref{thm:spec_conv_pointcloud_heatkernel} and Theorem \ref{thm:spec_conv_pointcloud_laplacian} are essentially the same as those of Theorem \ref{thm:spectralconvergence_heatkernel} and Theorem \ref{thm:spectralconvergence_laplacian} except the fact that we lack the knowledge of the parallel transport. Indeed, in (\ref{VDM:gij}) the parallel transport is assumed to be accessible to the data analyst while in this section we only have access to the point cloud. Thus, the key ingredient of the proofs of Theorem \ref{thm:spec_conv_pointcloud_heatkernel} and Theorem \ref{thm:spec_conv_pointcloud_laplacian} is controlling the error terms coming from the estimation of the tangent plane and the parallel transport, while the other comments and details are the same as those in Section \ref{proof:PointwiseSpectralConvergence}. To better appreciate the role of these two estimations, we assume here that we have access to the embedding $\iota$ and the knowledge of the embedded tangent bundle. Precisely, suppose we have access to the embedded tangent plane, which is an affine space inside $\RR^p$, but the embedding $\iota$ and the parallel transport are not accessible to us. 
Denote the basis of the embedded tangent plane $\iota_*T_{x_i}\MM$ to be a $p\times d$ matrix $Q_i$.
By \cite[(B.68)]{singer_wu:2012}, we can approximate the parallel transport from $x_i$ to $x_j$ from $Q_i$ and $Q_j$ with a tolerable error; that is,
\begin{equation*}
\myP^{i}_{j}X(x_j)\approx \iota_*^TQ_iQ_{ij}Q^T_j\iota_*X(x_j),
\end{equation*}
where $Q_{ij} := \argmin_{O \in O(d)} \|O - Q_i^T Q_j\|_{HS}$.
Notice that even we know bases of these embedded tangent planes, the optimization step to obtain $Q_{ij}$ is still needed since in general $Q_i^TQ_j$ is not orthogonal due to the curvature. With the above discussion, we know that if the embedded tangent bundle information is further missing and we have to estimate it from the point cloud, another resource of error comes to play. Indeed, denote the estimated embedded tangent plane by a $p\times d$ matrix $O_x$. In \cite{singer_wu:2012}, it has been shown that
\begin{align*}
O^T_{x}\iota_*X(x)\approx Q^T_i\iota_*X(x).
\end{align*}
This approximation is possible due to the following two facts. First, by definition locally a manifold is isomorphic to the Euclidean space up to a second order error depending on the curvature. Second, the embedding $\iota$ is smooth so locally the manifold is distorted up to the Jacobian of $\iota$. We point out that in \cite{singer_wu:2012} we focus on the pointwise convergence so the error terms in \cite{singer_wu:2012} were simplified by the big O notations. \newline
\begin{proof}[Proof of Theorem \ref{thm:spec_conv_pointcloud_heatkernel} and Theorem \ref{thm:spec_conv_pointcloud_laplacian}] 

\underline{\textbf{Step 1: Estimate the frame bundle and connection.}}

Here we give an outline of the proof and indicate how the error terms look like. We refer the reader to \cite{singer_wu:2012} for the other details. Recall the following results in \cite[Theorem B.1]{singer_wu:2012} under a proper choice of the kernel bandwidth $h_{\text{pca}}\ll h$:
\begin{align*}
O^T_i\iota_*X(x_i)=Q^T_i\iota_*X(x_i)+O_p(h^{3/2}),
\end{align*}
where $x_i\in\mathcal{X}$ and the $O(h^{3/2})$ term contains both the bias error and variance originating from the finite sample. It is cleat that the constant solely depends on the curvatures of the manifold and their covariant derivatives. Indeed, for a fixed $i$, the covariance matrix $\Xi_i$ built up in the local PCA step is
\begin{equation*}
\Xi_i = \frac{1}{n-1}\sum_{j\neq i}^{n} F_{i,j}\chi_{\|\iota(x_i)-\iota(x_j)\|\leq \sqrt{h_{\text{pca}}}},
\end{equation*}
where $F_{i,j}$ are i.i.d random matrix of size $p\times p$
\begin{equation*}
F_{i,j}=K\left(\frac{\|\iota(x_i)-\iota(x_j)\|_{\mathbb{R}^p}}{\sqrt{h_{\text{pca}}}}\right)(\iota(x_j)-\iota(x_i))(\iota(x_j)-\iota(x_i))^T,
\end{equation*}
so that its $(k,l)$-th entry
\begin{equation*}
F_{i,j}(k,l)=K\left(\frac{\|\iota(x_i)-\iota(x_j)\|_{\mathbb{R}^p}}{\sqrt{h_{\text{pca}}}}\right)\langle \iota(x_j)-\iota(x_i),v_k\rangle\langle \iota(x_j)-\iota(x_i),v_l\rangle,
\end{equation*}
where $v_l$ is the unit column vector with the $l$-th entry $1$ and $0<h_{\text{pca}}<h$.
Since $F_{i,j}$ are i.i.d. in $j$, we use $F_i$ to denote the random matrix whose expectation is
\begin{align*}
\EE F_{i}(k,l)=\int_{B_{\sqrt{h_{\text{pca}}}}(x_i)} K_{h_{\text{pca}}}(x_i,y)\langle\iota(y)-\iota(x_i),v_k\rangle\langle \iota(y)-\iota(x_i),v_l\rangle \sfp(y)\ud V(y),
\end{align*}
By Berstein's inequality, it has been shown in \cite{singer_wu:2012} that 
\begin{align*}
\mbox{Pr}\left\{\left|\Xi_i(k,l)-\EE F_i(k,l)\right|>\alpha\right\} \leq\exp\left\{-\frac{(n-1)\alpha^2}{O(h_{\text{pca}}^{d/2+2})+O(h_{\text{pca}})\alpha}\right\}.
\end{align*}
when $k,l=1,\ldots,d$;
\begin{align*}
\mbox{Pr}\left\{\left|\Xi_i(k,l)-\EE F_i(k,l)\right|>\alpha\right\}\leq \exp\left\{-\frac{(n-1)\alpha^2}{O(h_{\text{pca}}^{d/2+4})+O(h_{\text{pca}}^2)\alpha}\right\},
\end{align*}
when $k,l=d+1,\ldots,p$;
\begin{align*}\begin{split}
\mbox{Pr}\left\{\left|\Xi_i(k,l)-\EE F_i(k,l)\right|>\alpha\right\}\leq \exp\left\{-\frac{(n-1)\alpha^2}{O(h_{\text{pca}}^{d/2+3})+O(h_{\text{pca}}^{3/2})\alpha}\right\},
\end{split}\end{align*}
for the other cases. Then, denote $\Omega_{n,\alpha_1,\alpha_2,\alpha_3}$ to be the event space that for all $i=1,\ldots,n$, $|\Xi_i(k,l)-\EE F_i(k,l)|\leq\alpha_1$ for all $k,l=1,\ldots,d$, $|\Xi_i(k,l)-\EE F_i(k,l)|\leq\alpha_2$ for all $k,l=d+1,\ldots,p$, $|\Xi_i(k,l)-\EE F_i(k,l)|\leq\alpha_3$ for all $k=1,\ldots,d$, $l=d+1,\ldots,p$ and $l=1,\ldots,d$, $k=d+1,\ldots,p$. By a direct calculation we know that the probability of $\Omega_{n,\alpha_1,\alpha_2,\alpha_3}$ is lower bounded by
\begin{align}
1-n\Big(&d^2\exp\left\{-\frac{(n-1)\alpha_1^2}{O(h_{\text{pca}}^{d/2+2})+O(h_{\text{pca}})\alpha_1}\right\}+(p-d)^2\exp\left\{-\frac{(n-1)\alpha_2^2}{O(h_{\text{pca}}^{d/2+3})+O(h_{\text{pca}}^2)\alpha_2}\right\}\nonumber\\
&+p(p-d)\exp\left\{-\frac{(n-1)\alpha_3^2}{O(h_{\text{pca}}^{d/2+3})+O(h_{\text{pca}}^{3/2})\alpha_3}\right\}\Big).\nonumber
\end{align} 
Choose $\alpha_1=O\left(\frac{\log(n)h_{\text{pca}}^{d/4+1}}{n^{1/2}}\right)$, $\alpha_2=O\left(\frac{\log(n)h_{\text{pca}}^{d/4+2}}{n^{1/2}}\right)$ and $\alpha_3=O\left(\frac{\log(n)h_{\text{pca}}^{d/4+3/2}}{n^{1/2}}\right)$. Then, the probability of $\Omega_{n,\alpha_1,\alpha_2,\alpha_3}$ is higher than $1-O(1/n^2)$. As a result, when conditional on $\Omega_{n,\alpha_1,\alpha_2,\alpha_3}$ and a proper chosen $h_{\text{pca}}$, that is, $h_{\text{pca}}=O(n^{-2/(d+2)})$, we have  
\begin{align*}
O^T_{i}\iota_*X(x)=Q^T_i\iota_*X(x_i)+h_{\text{pca}}^{3/2}b_{1}\iota_*X(x_i), 
\end{align*}
where $b_{1}:\RR^p\to\RR^d$ is a bounded operator. Thus, conditional on $\Omega_{n,\alpha_1,\alpha_2,\alpha_3}$, we have \cite[(B.76)]{singer_wu:2012}:
\begin{align*}
O_i^TO_i=Q_i^TQ_i+h^{3/2}b_{2},  
\end{align*}
and hence \cite[Theorem B.2]{singer_wu:2012}
\begin{align}
\iota_*^TO_iO_{ij}B^T_iX(x_j)=\myP^{i}_{j}X(x_j)+h^{3/2}b_{3}X(x_j)\label{step0:Oij_vs_Pij},
\end{align} 
where $b_{2}:\RR^d\to \RR^d$ and $b_{3}:T_{x_j}\MM\to T_{x_i}\MM$ are bounded operators. Note that since  $h_{\text{pca}}\ll h$, the error introduced by local PCA step is absorbed in $h^{3/2}$.
We emphasize that both $O_i$ and $O_{ij}$ are random in nature, and they are dependent to some extent. When conditional on $\Omega_{n,\alpha_1,\alpha_2,\alpha_3}$, the randomness is bounded and we are able to proceed.

Define operators $Q^T_{\mathcal{X}}:T\MM_{\mathcal{X}}\to V_{\mathcal{X}}$ and $Q_{\mathcal{X}}:V_{\mathcal{X}}\to T\MM_{\mathcal{X}}$ by
\begin{align}
&Q_{\mathcal{X}}\vv:=[\iota^T_*Q^T_1\vv[1],\ldots \iota^T_*Q^T_n\vv[n]]\in T\MM_{\mathcal{X}},\nonumber\\ 
&Q_{\mathcal{X}}^T\vw:=[(Q_1\iota_*\vw[1])^T,\ldots, (Q_n\iota_*\vw[n])^T]^T\in V_{\mathcal{X}},\nonumber
\end{align}
where $\vw\in T\MM_{\mathcal{X}}$ and $\vv\in V_{\mathcal{X}}$. 

Note that $Q_{\mathcal{X}}^T\vD_{h,\alpha,n}^{-1}\vP_{h,\alpha,n}\vX$ is exactly the same as $B_{\mathcal{X}}^T\vD_{h,\alpha,n}^{-1}\vP_{h,\alpha,n}\vX$, so its behavior has been understood in Theorem \ref{thm:spectralconvergence_heatkernel}. Therefore, if we can control the difference between $Q_{\mathcal{X}}^T\vD_{h,\alpha,n}^{-1}\vP_{h,\alpha,n}\vX$ and $O_{\mathcal{X}}^T \vD_{h,\alpha,n}^{-1}\vP^{\textup{O}}_{h,\alpha,n}\vX$, where $\vP_{h,\alpha,n}$ is defined in (\ref{def:Sbundle}) when the frame bundle information can be fully accessed, by some modification of the proof of Theorem \ref{thm:spectralconvergence_heatkernel}, we can conclude the Theorem. By Lemma \ref{lemma:LargeDeviation}, we know that conditional on the event space $\Omega_{\sfp}$, which has probability higher than $1-O(1/n^2)$, we have $\widehat{p}_{h,n}>p_m/4 $. Thus, while conditional on $\Omega_{n,\alpha_1,\alpha_2,\alpha_3}\cap \Omega_{\sfp}$, by (\ref{step0:Oij_vs_Pij}), for all $i=1,\ldots,n$,
\begin{align}
&\left|Q_{\mathcal{X}}^T\vD_{h,\alpha,n}^{-1}\vP_{h,\alpha,n}\vX[i]-O_{\mathcal{X}}^T \vD_{h,\alpha,n}^{-1}\vP^{\textup{O}}_{h,\alpha,n}\vX[i]\right| = \left|\frac{1}{n}\sum_{j=1}^n\widehat{M}_{h,\alpha,n}(x_i,x_j)\big(\myP^{i}_{j}-\iota_*^TO_iO_{ij}B_j^T\big)X(x_j)\right|\nonumber\\
=&\,h^{3/2}\left|\frac{1}{n}\sum_{j=1}^n\widehat{M}_{h,\alpha,n}(x_i,x_j)b_{3}X(x_j)\right| =O(h^{3/2}), 
\end{align}
where the last inequality holds due to the fact that $\widehat{p}_{h,n}>p_m/4$ and $O(h^{3/2})$ depends on $\|X\|_{L^\infty}$. As a result, when conditional on $\Omega_{n,\alpha_1,\alpha_2,\alpha_3}\cap \Omega_{\sfp}$, the error introduced by the frame bundle estimation is of order high enough so that the object of interest, the connection Laplacian, is not influenced if we focus on a proper subspace of $L^2(\mathcal{E})$ depending on $h$. 
\newline\newline
\underline{\textbf{Step 2: Spectral convergence}}

Based on the analysis on Step 1, when conditional on $\Omega_{n,\alpha_1,\alpha_2,\alpha_3}\cap \Omega_{\sfp}$, we can directly study $Q_{\mathcal{X}}^T\vD_{h,\alpha,n}^{-1}\vP_{h,\alpha,n}\vX$ with the price of a negligible error. Clearly all steps in the proof of Theorem \ref{thm:spectralconvergence_heatkernel} hold for $Q_{\mathcal{X}}^T\vD_{h,1,n}^{-1}\vP_{h,1,n}$. As a result, conditional on $\Omega_{n,\alpha_1,\alpha_2,\alpha_3}\cap \Omega_{\sfp}$, by the perturbation theory, the eigenvectors of $O_{\mathcal{X}}^T\vD_{h,1,n}^{-1}\vP^{\textup{O}}_{h,1,n}$ is deviated from the eigenvectors of $Q_{\mathcal{X}}^T\vD_{h,1,n}^{-1}\vP_{h,1,n}$ by an error of order $h^{3/2}$, and we have finished the proof.

\end{proof}

\section*{Acknowledgment}
A. Singer was partially supported by Award Number R01GM090200 from the
NIGMS, by Award Number FA9550-12-1-0317 and FA9550-13-1-0076 from AFOSR,
and by Award Number LTR DTD 06-05-2012 from the Simons Foundation. H.-T. Wu acknowledges support by AFOSR grant FA9550-09-1-0551, NSF grant CCF-0939370 and FRG grant DSM-1160319. H.-T. Wu thanks Afonso Bandeira for reading the first version of this manuscript.

\appendix

\section{An Introduction to Principal Bundle}\label{section:appendix:principal_bundle}

In this appendix, we collect a few relevant and self-contained facts about the mathematical framework {\it principal bundle} which are used in the main text. We refer the readers to, for example \cite{bishop,Berline_Getzler_Vergne:2004}, for more general definitions which are not used in this paper.

We start from discussing the notion of {\it group action}, {\it orbit} and {\it orbit space}. Consider a set $Y$ and a group $G$ with the identity element $e$. The left group action of $G$ on $Y$ is a map from $G\times Y$ onto $Y$
\begin{align}
G\times Y\to Y,\quad (g,x)\mapsto g\circ x
\end{align}
so that $(gh)\circ x=g\circ (h\circ x)$ is satisfied for all $g,h\in G$ and $x\in Y$ and $e\circ x=x$ for all $x$. The right group action can be defined in the same way. Note that we can construct a right action by composing with the inverse group operation, so in some scenarios it is sufficient to discuss only left actions. There are several types of group action. We call an action {\it transitive} if for any $x,y\in Y$, there exists a $g\in G$ so that $g\circ x=y$. In other words, under the group action we can jump between any pair of two points on $Y$, or $Y=G\circ x$ for any $x\in Y$. We call an action {\it effective} is for any $g,h\in G$, there exists $x$ so that $g\circ x\neq h\circ x$. In other words, different group elements induce different permutations of $Y$. We call an action {\it free} if $g\circ x = x$ implies $g=e$ for all $g$. In other words, there is no fixed points under the $G$ action, and hence the name free. If $Y$ is a topological space, we call an action {\it totally discontinuous} if for every $x\in Y$, there is an open neighborhood $U$ such that $(g\circ U)\cap U=\emptyset$ for all $g\in G$, $g\neq e$.

The {\it orbit} of a point $x\in Y$ is the set
\[
Gx:=\{g\circ x;\, g\in G\}.
\]
The group action induces an equivalence relation. We say $x\sim y$ if and only if there exists $g\in G$ so that $g\circ x = y$ for all pairs of $x,y\in Y$. Clearly the set of orbits form a partition of $Y$, and we denote the set of all orbits as $Y/\sim$ or $Y/G$. We can thus define a projection map $\pi$ by
\[
Y\to Y/G,\quad x\mapsto Gx.
\]
We call $Y$ the {\it total space} or the {\it left $G$-space}, $G$ the {\it structure group}, $Y/G$ the {\it quotient space}, the {\it base space} or the {\it orbit space} of $Y$ under the action of $G$ and $\pi$ the {\it canonical projection}.

We define a principal bundle as a special $G$-space which satisfies more structure. Note that the definitions given here are not the most general ones but are enough for our purpose.
\begin{definition}[Fiber bundle] Let $\mathcal{F}$ and $\MM$ be two smooth manifolds and $\pi$ a smooth map from $\mathcal{F}$ to $\MM$. We say that $\mathcal{F}$ is a {\it fiber bundle} with fiber $F$ over $\MM$ if there is an open covering of $\MM$, denoted as $\{U_i\}$, and diffeomorphisms $\{\psi_i:\pi^{-1}(U_i)\to U_i\times F\}$ so that $\pi:\pi^{-1}(U_i)\to U_i$ is the composition of $\psi_i$ with projection onto $U_i$.
\end{definition}
By definition, $\pi^{-1}(x)$ is diffeomorphic to $F$ for all $x\in \MM$. We call $\mathcal{F}$ the total space of the fiber bundle, $\MM$ is the base space, $\pi$ the canonical projection, and $F$ the fiber of $\mathcal{F}$. With the above algebraic setup, in a nutshell, the principal bundle is a special fiber bundle accompanied by a group action.
\begin{definition}[Principal bundle] Let $\MM$ be a smooth manifold and $G$ a Lie group. A {\it principal bundle over $\MM$ with structure group $G$} is a fiber bundle $P(\MM,G)$ with fiber diffeomorphic to $G$, a smooth right action of $G$, denoted as $\circ$, on the fibers and a canonical projection $\pi:P\to\MM$ so that
\begin{enumerate}
\item $\pi$ is smooth and $\pi(g\circ p)=\pi(p)$ for all $p\in P$ and $g\in G$;
\item $G$ acts freely and transitively;
\item the diffeomorphism $\psi_i:\pi^{-1}(U_i)\to U_i\times G$ satisfies $\psi_i(p)=(\pi(p),\phi_i(p))\in U_i\times G$ such that $\phi_i:\pi^{-1}(U_i)\to G$ satisfying $\phi_i(pg)=\phi_i(p)g$ for all $p\in \pi^{-1}(U_i)$ and $g\in G$. 
\end{enumerate}
\end{definition}
Note that 
$\MM=P(\MM,G)/G$, where the equivalence relation is induced by $G$. From the view point of orbit space, $P(\MM,G)$ is the total space, $G$ is the structure group, and $\MM$ is the orbit space of $P(\MM,G)$ under the action of $G$. 
Intuitively, $P(\MM,G)$ is composed of a bunch of sets diffeomorphic to $G$, all of which are pulled together under some rules.\footnote{These rules are referred to as {\it transition functions}.} We give some examples here:

\begin{example}\label{Appendix:Example:trivialbundleConstruction}
Consider $P(\MM,G)=\MM\times G$ so that $G$ acts by $g\circ (x,h)=(x,hg)$ for all $(x,h)\in \MM\times G$ and $g\in G$. We call such principal bundle {\it trivial}. In particular, when $G=\{e\}$, the trivial group, $P(\MM,\{e\})$ is the principal bundle, which we choose to unify the graph Laplacian and diffusion map. 
\end{example}

\begin{example}
A particular important example of the principal bundle is the {\it frame bundle}, denoted as $GL(\MM)$, which is the principal $GL(d,\RR)$-bundle with the base manifold a $d$-dim smooth manifold $\MM$. We construct $GL(\MM)$ for the purpose of completeness. Denote $B_x$ to be the set of bases of the tangent space $T_x\MM $, that is, $B_x\cong GL(d,\RR)$ and $u_x\in B_x$ is a basis of $T_x\MM$. Let $GL(\MM)$ be the set consisting of all bases at all points of $\MM$, that is, $GL(\MM):=\{u_x;\, u_x\in B_x,\,x\in\MM\}$. Let $\pi:GL(\MM)\to \MM$ by $u_x\mapsto x$ for all $u_x\in B_x$ and $x\in \MM$. Define the right $GL(d,\RR)$ action on $GL(\MM)$ by $g\circ u_x=v_x$, where $g=[g_{ij}]_{i,j=1}^d\in GL(d,\RR)$, $u_x=(X_1,\ldots,X_d)\in B_x$ and $v_x=(Y_1,\ldots,Y_d)\in B_x$ with $Y_i=\sum_{j=1}^dg_{ij}X_j$. By a direct calculation, $GL(d,\RR)$ acts on $GL(\MM)$ from the right freely and transitively, and $\pi(g\circ u_x)=\pi(u_x)$. In a coordinate neighborhood $U$, $\pi^{-1}(U)$ is 1-1 corresponding with $U\times GL(d,\RR)$, which induces a differentiable structure on $GL(\MM)$. Thus $GL(\MM)$ is a principal $GL(d,\RR)$-bundle.
\end{example}

\begin{example}\label{Appendix:Example:Z2bundleConstruction}
Another important example is the {\it orientation principal bundle}, which we choose to unify the orientable diffusion map. The construction is essentially the same as that of the frame bundle. First, let $P(\MM,O(1))$ be the set of all orientations at all points of $\MM$ and let $\pi$ be the canonical projection from $P(\MM,O(1))$ to $\MM$, where $O(1)\cong \ZZ_2\cong \{1,-1\}$. In other words, $P(\MM,O(1)):=\{u_x;\, u_x\in \{1,-1\},\,x\in\MM\}$, where $\ZZ_2$ stands for the possible orientation of each point $x$. The $O(1)\cong \{1,-1\}$ group acts on $P(\MM,O(1))$ simply by $u\to ug$, where $u\in P(\MM,O(1))$ and $g\in \{1,-1\}$. The differentiable structure in $P(\MM,O(1))$ is introduced in the following way. Take $(x^1,\ldots,x^d)$ as a local coordinate system in a coordinate neighborhood $U$ in $\MM$. Since $\ZZ_2$ is a discrete group, we take $\pi^{-1}(U)$ as two disjoint sets $U\times\{1\}$ and $U\times\{-1\}$ and take  $(x^1,\ldots,x^d)$ as their coordinate systems. Clearly $P(\MM,O(1))$ is a principal fiber bundle and we call it the orientation principal bundle.
\end{example}
If we are given a left $G$-space $F$, we can form a {\it fiber bundle} from $P(\MM,G)$ so that its fiber is diffeomorphic to $F$ and its base manifold is $\MM$ in the following way. By denoting the left $G$ action on $F$ by $\cdot$, we have
\[
\mathcal{E}(P(\MM,G),\cdot,F):=P(\MM,G)\times_G F:=P(\MM,G)\times F/G,
\]
where the equivalence relation is defined as
\[
(g\circ p,g^{-1}\cdot f)\sim (p,f)
\]
for all $p\in P(\MM,G)$, $g\in G$ and $f\in F$. The canonical projection from $\mathcal{E}(P(\MM,G),\cdot,F))$ to $\MM$ is denoted as $\pi_{\mathcal{E}}$:
\[
\pi_{\mathcal{E}}:(p,f)\mapsto \pi(p),
\]
for all $p\in P(\MM,G)$ and $f\in F$.
We call $\mathcal{E}(P(\MM,G),\cdot,F)$ the {\it fiber bundle associated with $P(\MM,G)$ with standard fiber $F$} or the {\it associated fiber bundle} whose differentiable structure is induced from $\MM$. Given $p\in P(\MM,G)$, denote $pf$ to be the image of $(p,f)\in P(\MM,G)\times F$ onto $\mathcal{E}(P(\MM,G),\cdot,F)$. By definition, $p$ is a diffeomorphism from $F$ to $\pi_{\mathcal{E}}^{-1}(\pi(p))$ and 
\[
(g\circ p)f=p(g\cdot f).
\]
Note that the associated fiber bundle $\mathcal{E}(P(\MM,G),\cdot,F)$ is a special fiber bundle and its fiber is diffeomorphic to $F$. When there is no danger of confusion, we denote $\mathcal{E}:=\mathcal{E}(P(\MM,G),\cdot,F)$ to simply the notation.

\begin{example}
When $F=V$ is a vector space and the left $G$ action on $F$ is a linear representation, the associated fiber bundle is called the {\it vector bundle associated with the principal bundle $P(\MM,G)$ with fiber $V$}, or simply called the {\it vector bundle} if there is no danger of confusion. For example, take $F=\RR^{q}$, denote $\rho$ to be a representation of $G$ into $GL(q,\RR)$ and assume $G$ acts on $\RR^{q}$ via the representation $\rho$. A particular example of interest is the tangent bundle $T\MM:=\mathcal{E}(P(\MM,GL(d,\RR)),\rho,\RR^d)$ when $\MM$ is a $d$-dim smooth manifold and the representation $\rho$ is identity.  The practical meaning of the frame bundle and its associated tangent bundle is change of coordinate. That is, if we view a point $u_x\in GL(\MM)$ as the basis of the fiber $T_x\MM$, where $x=\pi(u_x)$, then the coordinate of a point on the tangent plane $T_x\MM$ changes, that is, $v_x\to g\cdot v_x$ where $g\in GL(d,\RR)$ and $v_x\in\RR^d$, according to the changes of the basis, that is, $g\to g\circ u_x$. Also notice that we can view a basis of $T_x\MM$ as an invertible linear map from $\RR^d$ to $T_x\MM$ by definition. Indeed, if take $e_i$, $i=1,\ldots,d$ to be the natural basis of $\RR^d$; that is, $e_i$ is the unit vector with $1$ in the $i$-th entry, a linear frame $u_x=(X_1,\ldots,X_d)$ at $x$ can be viewed as a linear mapping $u_x:\RR^d\to T_x\MM$ such that $u_xe_i=X_i$, $i=1,\ldots,d$. 
\end{example}

A (global) {\it section} of a fiber bundle $\mathcal{E}$ with fiber $F$ over $\MM$ is a map
\[
s:\MM\to \mathcal{E}
\]
so that $\pi(s(x))=x$ for all $x\in \MM$. We denote $\Gamma(\mathcal{E})$ to be the set of sections; $C^l(\mathcal{E})$ to be the space of all sections with the $l$-th regularity, where $l\geq 0$. An important property of the principal bundle is that a principal bundle is trivial if and only if $C^0(P(\MM,G))\neq\emptyset$. In other words, all sections on a non-trivial principal bundle are discontinuous. On the other hand, there always exists a continuous section on the associated vector bundle $\mathcal{E}$.

Let $V$ be a vector space. Denote $GL(V)$ to be the group of all invertible linear maps on $V$. If $V$ comes with an inner product, then define $O(V)$ to be the group of all orthogonal maps on $V$ with related to the inner product. From now on we focus on the vector bundle with fiber being a vector space $V$ and the action $\cdot$ being a representation $\rho:G\to GL(V)$, that is, $\mathcal{E}(P(\MM,G),\rho,V)$. 

To introduce the notion of {\it covariant derivative} on the vector bundle $\mathcal{E}$, we have to introduce the notion of {\it connection}. Note that the fiber bundle $\mathcal{E}$ is a manifold. Denote $T\mathcal{E}$ to be the tangent bundle of $\mathcal{E}$ and $T^*\mathcal{E}$ to be the cotangent bundle of $\mathcal{E}$. We call a tangent vector $X$ on $\mathcal{E}$ {\it vertical} if it is tangential to the fibers; that is, $X(\pi_{\mathcal{E}}^*f)=0$ for all $f\in C^\infty(\MM)$. Note that $\pi_{\mathcal{E}}^*f$ is a function defined on $\mathcal{E}$ which is constant on each fiber, so we call $X$ vertical when $X(\pi_{\mathcal{E}}^*f)=0$ for all $f\in C^\infty(\MM)$. Denote the bundle of vertical vectors as $V\mathcal{E}$, which is referred to as {\it the vertical bundle}, and is a subbundle of $T\mathcal{E}$. We call a vector field vertical if it is a section of the vertical bundle. 
Clearly the quotient of $T\mathcal{E}$ by its subbundle $V\mathcal{E}$ is isomorphic to $\pi^*T\MM$, and hence we have a short exact sequence of vector bundles:
\begin{align}
0\to V\mathcal{E}\to T\mathcal{E}\to \pi^*T\MM\to 0.\label{appendix:splitting:definition}
\end{align}
However, there is no canonical splitting of this short exact sequence. A chosen splitting is called a {\it connection}. In other words, a connection is a $G$-invariant distribution $H\subset T\mathcal{E}$ complementary to $V\mathcal{E}$.
\begin{definition}[Connection $1$-form]
Let $P(\MM,G)$ be a principal bundle. A connection $1$-form on $P(\MM,G)$ is an $\mathfrak{g}$-valued $1$-form $\omega\in \Gamma(T^*P(\MM,G)\otimes VP(\MM,G))$ so that $\omega(X)=X$ for any $X\in \Gamma(VP(\MM,G))$ and is invariant under the action of $G$. The kernel of $\omega$ is called the {\it horizontal bundle} and is denoted as $HP(\MM,G)$
\end{definition}
Note that $HP(\MM,G)$ is isomorphic to $\pi^*T\MM$. Clearly, a connection $1$-form determines a splitting of (\ref{appendix:splitting:definition}), or the connection on $P(\MM,G)$. In other words, as a linear subspace, the horizontal subspace $H_pP(\MM,G)\subset T_pP(\MM,G)$ is cut out by $\dim G$ linear equations defined on $T_pP(\MM,G)$.

We call a section $X_{P}$ of $HP(\MM,G)$ a {\it horizontal vector field}. Given $X\in \Gamma(T\MM)$, we say that $X_{P}$ is the {\it horizontal lift} with respect to the connection on $P(\MM,G)$ of $X$ if $X={\pi}_*X_{P}$. Given a smooth curve $\tau:=c(t)$, $t\in[0,1]$ on $\MM$ and a point $u(0)\in P(\MM,G)$, we call a curve $\tau^*=u(t)$ on $P(\MM,G)$ the {\it (horizontal) lift of $c(t)$} if the vector tangent to $u(t)$ is horizontal and $\pi(u(t))=c(t)$ for $t\in[0,1]$. The existence of $\tau^*$ is an important property of the connection theory. We call $u(t)$ the {\it parallel displacement} of $u(0)$ along the curve $\tau$ on $\MM$.

With the connection on $P(\MM,G)$, the connection on an associated vector bundle $\mathcal{E}$ with fiber $V$ is determined. As a matter of fact, we define the connection, or $H\mathcal{E}$, to be the image of $HP(\MM,G)$ under the natural projection $P(\MM,G)\times V\to \mathcal{E}(P(\MM,G),\rho,V)$. Similarly, we call a section $X_{\mathcal{E}}$ of $H\mathcal{E}$ a {\it horizontal vector field}. Given $X\in \Gamma(T\MM)$, we say that $X_{\mathcal{E}}$ is the {\it horizontal lift} with respect to the connection on $\mathcal{E}$ of $X$ if $X={\pi_{\mathcal{E}}}_*X_{\mathcal{E}}$. Given a smooth curve $c(t)$, $t\in[0,1]$ on $\MM$ and a point $v_0\in \mathcal{E}$, we call a curve $v_t$ on $\mathcal{E}$ the {\it (horizontal) lift of $c(t)$} if the vector tangent to $v_t$ is horizontal and $\pi_{\mathcal{E}}(v_t)=c(t)$ for $t\in[0,1]$. The existence of such horizontal life holds in the same way as that of the principal bundle. We call $v_t$ the {\it parallel displacement} of $v_0$ along the curve $\tau$ on $\MM$. Note that we have interest in this connection on the vector bundle since it leads to the {\it covariant derivative} we have interest.
\begin{definition}[Covariant Derivative] Take a vector bundle $\mathcal{E}$ associated with the principal bundle $P(\MM,G)$ with fiber $V$. The covariant derivative $\nabla^{\mathcal{E}}$ of a smooth section $X\in C^1(\mathcal{E})$ at $x\in\MM$ in the direction $\dot{c}_0$ is defined as
\begin{equation}\label{definition:parallel:frame}
\nabla^{\mathcal{E}}_{\dot{c}_0} X=\lim_{h\to 0}\frac{1}{h}[\myP^{c(0)}_{c(h)}X(c(h))-X(x)],
\end{equation}
where $c:[0,1]\to \MM$ is a curve on $\MM$ so that $c(0)=x$ and $\myP^{c(0)}_{c(h)}$ denotes the parallel displacement of $X$ from $c(h)$ to $c(0)$
\end{definition}
Note that in general although all fibers of $\mathcal{E}$ are isomorphic to $V$, the notion of comparison among them is not provided. An explicit example demonstrating the derived problem is given in the appendix of \cite{singer_wu:2012}. However, with the parallel displacement based on the notion of connection, we are able to compare among fibers, and hence define the derivative. With the fact that
\begin{align}\label{definition:parallel:frame2fact}
\myP^{c(0)}_{c(h)}X(c(h))= u(0) u(h)^{-1}X(c(h)),
\end{align}
where $u(h)$ is the horizontal lift of $c(h)$ to $P(\MM,G)$ so that $\pi(u(0))=x$, the covariant derivative (\ref{definition:parallel:frame}) can be represented in the following format:
\begin{equation}\label{definition:parallel:frame2}
\nabla^{\mathcal{E}}_{\dot{c}_0} X=\lim_{h\to 0}\frac{1}{h}[ u(0) u(h)^{-1}(X(c(h)))-X(c(0))],
\end{equation}
which is independent of the choice of $u(0)$. To show (\ref{definition:parallel:frame2fact}), set $v:= u(h)^{-1}(X(c(h)))\in V$. Clearly $ u(t) (v)$, $t\in [0,h]$, is a horizontal curve in $\mathcal{E}$ by definition. It implies that $ u(0) v= u(0) u(h)^{-1}(X(c(h)))$ is the parallel displacement of $X(c(h))$ along $c(t)$ from $c(h)$ to $c(0)$. Thus, although the covariant derivatives defined in (\ref{definition:parallel:frame}) and (\ref{definition:parallel:frame2}) are different in their appearances, they are actually equivalent.
We can understand this definition in the frame bundle $GL(\MM^d)$ and its associated tangent bundle. First, we find the coordinate of a point on the fiber $X(c(h))$, which is denoted as $u(h)^{-1}(X(c(h)))$, and then we put this coordinate $u(h)^{-1}(X(c(h)))$ to $x=c(0)$ and map it back to the fiber $T_x\MM$ by the basis $u(0)$. In this way we can compare two different ``abstract fibers'' by comparing their coordinates. A more abstract definition of the covariant derivative, yet equivalent to the aboves, is the following. A covariant derivative of $\mathcal{E}$ is a differential operator 
\begin{equation}\label{definition:parallel:frame3}
\nabla^{\mathcal{E}} :C^\infty(\mathcal{E})\to C^\infty(T^*\MM\otimes \mathcal{E})
\end{equation}
so that the Leibniz's rule is satisfied, that is, for $X\in C^\infty(\mathcal{E})$ and $f\in C^\infty(\MM)$, we have
\[
\nabla^{\mathcal{E}}(fX)=df\otimes X+f\nabla^{\mathcal{E}} X,
\]
where $d$ is the exterior derivative on $\MM$. Denote $\Lambda^kT^*M$ (resp $\Lambda T^*M$) to be the bundle of $k$-th exterior differentials (resp. the bundle of exterior differentials), where $k\geq1$. Given two vector bundles $\mathcal{E}_1$ and $\mathcal{E}_2$ on $M$ with the covariant derivatives $\nabla^{\mathcal{E}_1}$ and $\nabla^{\mathcal{E}_2}$, we construct a covariant derivative on $\mathcal{E}_1\otimes\mathcal{E}_2$ by
\begin{equation} 
\nabla^{\mathcal{E}_1\otimes \mathcal{E}_2}:=\nabla^{\mathcal{E}_1}\otimes 1+1\otimes \nabla^{\mathcal{E}_2}.
\end{equation}

A fiber metric $g^{\mathcal{E}}$ in a vector bundle $\mathcal{E}$ is a positive-definite inner-product in each fiber $V$ that varies smoothly on $\MM$. For any ${\mathcal{E}}$, if $\MM$ is paracompact, $g^{\mathcal{E}}$ always exists. A connection in $P(\MM,G)$, and also its associated vector bundle $\mathcal{E}$, is called metric if
\[
dg^{\mathcal{E}}(X_1,X_2)=g^{\mathcal{E}}(\nabla^{\mathcal{E}} X_1,X_2)+g^{\mathcal{E}}(X_1,\nabla^{\mathcal{E}} X_2),
\]
for all $X_1,X_2\in C^\infty(\mathcal{E})$.
We mainly focus on metric connection in this work. It is equivalent to say that the parallel displacement of $\mathcal{E}$ preserves the fiber metric. An important fact about the metric connection is that if a connection on $P(\MM,G)$ is metric given a fiber metric $g^{\mathcal{E}}$, than the covariant derivative on the associated vector bundle $\mathcal{E}$ can be equally defined from a sub-bundle $Q(\MM,H)$ of $P(\MM,G)$, which is defined as
\begin{align}\label{appendix_definition:Q}
Q(\MM,H):=\{p\in P(\MM,G):\,g^{\mathcal{E}}(p(u),p(v))=(u,v)\},
\end{align}
where $(\cdot,\cdot)$ is an inner product on $V$ and the structure group $H$ is a closed subgroup of $G$. In other words, $p\in Q(\MM,H)$ is a linear map from $V$ to $\pi_{\mathcal{E}}^{-1}(\pi(p))$ which preserves the inner product. A direct verification shows that the structure group of $Q(\MM,H)$ is
\begin{align} 
H:=\{g\in G:\,\rho(g)\in O(V)\}\subset G.
\end{align}
Since orthogonal property is needed in our analysis, when we work with a metric connection on a principal bundle $P(\MM,G)$ given a fiber metric $g^{\mathcal{E}}$ on $\mathcal{E}(P(\MM,G),\rho,V)$, we implicitly assume we work with its sub bundle $Q(\MM,H)$. With the covariant derivative, we now define the connection Laplacian. Assume $\MM$ is a $d$-dim smooth Riemmanian manifold with the metric $g$. With the metric $g$ we have an induced measure on $M$, denoted as $\ud V$.\footnote{To obtain the most geometrically invariant formulations, we may consider the density bundles as is considered in \cite[Chapter 2]{Berline_Getzler_Vergne:2004}. We choose not to do that in order to simplify the discussion.} 
Denote $L^p(\mathcal{E})$, $1\leq p< \infty$ to be the set of $L^p$ integrable sections, that is, $X\in L^p(\mathcal{E})$ if and only if \[ \int |g^{\mathcal{E}}_x(X(x),X(x))|^{p/2}\ud V(x)<\infty. \] 
Denote $\mathcal{E}^*$ to be the dual bundle of $\mathcal{E}$, which is paired with $\mathcal{E}$ by $g^{\mathcal{E}}$, that is, the pairing between  $\mathcal{E}$ and $\mathcal{E}^*$ is $\langle X,Y\rangle:= g^{\mathcal{E}}(X,Y)$, where $X\in C^\infty(\mathcal{E})$ and $Y\in C^\infty(\mathcal{E}^*)$. The connection on the dual bundle $\mathcal{E}^*$ is thus defined by
\[
d\langle X,Y\rangle=g^{\mathcal{E}}(\nabla^{\mathcal{E}}X, Y)+g^{\mathcal{E}}(X,\nabla^{\mathcal{E}^*} Y).
\]   
Recall that the Riemannian manifold $(M,g)$ possesses a canonical connection referred to as the Levi-Civita connection $\nabla$ \cite[p. 31]{Berline_Getzler_Vergne:2004}. Based on $\nabla$ we define the connection $\nabla^{T^*M\otimes \mathcal{E}}$ on the tensor product bundle $T^*M\otimes \mathcal{E}$.
\begin{definition}
Take the Riemannian manifold $(M,g)$, the vector bundle $\mathcal{E}:=\mathcal{E}(P(\MM,G),\rho,V)$ and its connection $\nabla^{\mathcal{E}}$. The connection Laplacian on $\mathcal{E}$ is defined as $\nabla^2:C^\infty(\mathcal{E})\to C^\infty(\mathcal{E})$ by
\[
\nabla^2:=-\tr (\nabla^{T^*M\otimes \mathcal{E}}\nabla^{\mathcal{E}}),  
\]
where $\tr:C^\infty(T^*M\otimes T^*M\otimes \mathcal{E})\to C^\infty(\mathcal{E})$ by contraction with the metric $g$.
\end{definition}
If we take the normal coordinate $\{\partial_i\}_{i=1}^d$ around $x\in M$, for $X\in C^\infty(\mathcal{E})$, we have
\[
\nabla^2X(x)=-\sum_{i=1}^d\nabla_{\partial_i}\nabla_{\partial_i}X(x).
\]
Given compactly supported smooth sections $X,Y\in C^\infty(\mathcal{E})$, a direct calculation leads to
\begin{align}
&\tr\big[\nabla(g^{\mathcal{E}}(\nabla^{\mathcal{E}} X ,Y ))\big]\nonumber\\
=&\tr\big[g^{\mathcal{E}}(\nabla^{T^* M\otimes \mathcal{E}} \nabla^{\mathcal{E}}X ,Y )+g^{\mathcal{E}}(\nabla^{\mathcal{E}}X ,\nabla^{\mathcal{E}} Y )\big]\nonumber\\
=&g^{\mathcal{E}}(\nabla^2X ,Y )+\tr g^{\mathcal{E}}(\nabla^{\mathcal{E}}X ,\nabla^{\mathcal{E}} Y )\nonumber.
\end{align}
By the divergence theorem, the left hand side disappears after integration over $\MM$, and we obtain $\nabla^2=-\nabla^{\mathcal{E}*}\nabla^{\mathcal{E}}$. Similarly we can show that $\nabla^2$ is self-adjoint. We refer the readers to \cite{gilkey:1974} for further properties of $\nabla^2$, for example the ellipticity, its heat kernel, and its application to the index theorem.

\section{[Proof of Theorem \ref{thm:pointwise_conv:approx_of_identity}]}

The proof is a generalization of \cite[Theorem B.4]{singer_wu:2012} to the general principal bundle structure. Note that in \cite[Theorem B.4]{singer_wu:2012} dependence of the error terms on a given section is not explicitly shown. In order to prove the spectral convergence, we have to make this dependence explicit.
Denote $\widetilde{B}_{t}(x):=\iota^{-1}(B^{\RR^p}_{t}(x)\cap \iota(\MM))$, where $t\geq 0$.

\begin{lemma}\label{Lemma:extra}
Assume Assumption \ref{Assumption:A} and Assumption \ref{Assumption:K} hold. 
Suppose $X\in L^\infty(\mathcal{E})$ and $0<\gamma<1/2$. Then, when $h$ is small enough, for all $x\in\MM$ the following holds:
\[
\left|\int_{\MM\backslash\widetilde{B}_{h^{\gamma}}(x)}h^{-d/2}K_h(x,y)\myP_y^xX(y) \ud V(y)\right|=O(h^2),
\]
where $O(h^2)$ depends on $\|X\|_{L^\infty}$.
\end{lemma}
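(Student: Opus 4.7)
The plan is to exploit the exponential decay of the kernel $K$ assumed in (K1) against the fact that, on the complement of $\widetilde{B}_{h^\gamma}(x)$, the argument $\|x-y\|_{\RR^p}/\sqrt{h}$ is forced to grow like $h^{\gamma-1/2}\to\infty$. Since the parallel transport $\myP^x_y$ is defined to be an isometry on the fibers (and set to $0$ on the cut locus of $x$), the integrand is pointwise bounded by a scalar quantity times $\|X\|_{L^\infty}$, which reduces the statement to a purely scalar tail estimate that does not see the bundle structure at all.

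Concretely, I would first observe that for any $y \in \MM \setminus \widetilde{B}_{h^\gamma}(x)$ we have $\|x-y\|_{\RR^p} \geq h^\gamma$, hence
\[
\frac{\|x-y\|_{\RR^p}}{\sqrt{h}} \;\geq\; h^{\gamma-1/2}.
\]
Since $\gamma < 1/2$, the right-hand side tends to $+\infty$ as $h\to 0$. By (K1) the kernel $K$ decays exponentially, so there exist constants $C,c>0$ with $K(t)\leq C e^{-ct}$ for all $t$ sufficiently large. Combining these two facts yields the uniform bound
\[
K_h(x,y) \;\leq\; C\, e^{-c\,h^{\gamma-1/2}}
\]
on $\MM\setminus\widetilde{B}_{h^\gamma}(x)$ once $h$ is small enough, independently of $x\in\MM$.

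Next I would bound the integrand fiberwise: the fiber metric $g^{\mathcal{E}}$ is preserved by $\myP^x_y$, so $|g^{\mathcal{E}}(\myP^x_yX(y),\myP^x_yX(y))|^{1/2}\leq \|X\|_{L^\infty}$ whenever defined, and vanishes on the cut locus. Combined with $\vol(\MM)<\infty$, this gives
\[
\left|\int_{\MM\setminus\widetilde{B}_{h^\gamma}(x)} h^{-d/2}K_h(x,y)\myP^x_yX(y)\,\ud V(y)\right|
\;\leq\; C\,\vol(\MM)\,\|X\|_{L^\infty}\, h^{-d/2} e^{-c\,h^{\gamma-1/2}}.
\]
Since $\gamma-1/2<0$, the factor $e^{-c\,h^{\gamma-1/2}}$ decays faster than any power of $h$, so in particular $h^{-d/2}e^{-c\,h^{\gamma-1/2}} = o(h^N)$ for every $N\in\NN$; choosing $N=2$ yields the claimed $O(h^2)$, with constant depending linearly on $\|X\|_{L^\infty}$.

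I do not expect a serious obstacle here: the only thing that requires mild care is that the exponential-decay assumption in (K1) is phrased for $K$ and $K'$ but needs to be turned into a quantitative tail bound valid for $t\geq h^{\gamma-1/2}$ uniformly in $h$ small. One could alternatively replace the exponential-decay assumption by its consequence $\sup_{t\geq R} t^{d/2+2}K(t)\to 0$ as $R\to\infty$, and run the estimate with $R = h^{\gamma-1/2}$; this avoids introducing any new constants and makes the $O(h^2)$ bound immediate.
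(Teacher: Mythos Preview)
Your argument is correct and, in fact, cleaner than the paper's for this particular lemma. The paper takes the scalar integral $\int_{\MM\setminus\widetilde B_{h^\gamma}(x)}h^{-d/2}K_h(x,y)\,\ud V(y)$ and rewrites it in geodesic polar coordinates, expanding $K_h$ through the relation between Euclidean and geodesic distance (bringing in a $K'$ term and $\|\Pi(\theta,\theta)\|$) and expanding the volume element via the Ricci curvature. After changing variables $s=t/\sqrt{h}$ this leaves tail integrals $\int_{h^{\gamma-1/2}}^\infty K(s)s^{d-1}\,\ud s$, $\int_{h^{\gamma-1/2}}^\infty K(s)s^{d+1}\,\ud s$, $\int_{h^{\gamma-1/2}}^\infty K'(s)s^{d+2}\,\ud s$, each of which is disposed of by the exponential decay of $K$ and $K'$.

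What you do differently is simply take the sup of $K$ over $[h^{\gamma-1/2},\infty)$ and multiply by $h^{-d/2}\vol(\MM)\|X\|_{L^\infty}$, skipping polar coordinates and the curvature expansions entirely. This is legitimate because the lemma only asks for an $O(h^2)$ upper bound, and exponential decay beats any polynomial. Your route is more elementary and avoids the mild awkwardness in the paper's proof of matching the Euclidean-ball complement with a geodesic radial domain. The paper's more elaborate computation is not needed here; it is essentially a warm-up of the machinery used in the next lemma (the interior expansion of $\int K_h F\,\ud V$), where the polar-coordinate expansion is genuinely required.
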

\begin{proof}
We immediately have
\begin{align}
&\left|\int_{\MM\backslash\widetilde{B}_{h^{\gamma}}(x)}h^{-d/2}K_h(x,y)\myP_y^xX(y) \ud V(y)\right|\leq  \|X\|_{L^\infty}\left|\int_{\MM\backslash\widetilde{B}_{h^{\gamma}}(x)}h^{-d/2}K_h(x,y) \ud V(y)\right|\nonumber\\
=\,& \|X\|_{L^\infty}\Big|\int_{S^{d-1}}\int_{h^\gamma}^\infty h^{-d/2} \left[K\left(\frac{t}{\sqrt{h}}\right) +K'\left(\frac{t}{\sqrt{h}}\right)\frac{\|\Pi(\theta,\theta)\|t^3}{24\sqrt{h}}+O\left(\frac{t^6}{h}\right)\right] \nonumber\\
&\qquad\qquad\qquad\times \big[t^{d-1}+\Ric(\theta,\theta)t^{d+1}+O(t^{d+2})\big]\ud t\ud\theta\Big|\nonumber\\
=\,&\|X\|_{L^\infty} \left[\int_{h^{\gamma-1/2}}^\infty K(s)\left(s^{d-1}+h s^{d+1}\right)\ud s+h\int_{h^{\gamma-1/2}}^\infty  K'(s)s^{d+2}\ud s\right] +O(h^2)  =O(h^2), \nonumber
\end{align}
where $O(h^2)$ depends on $\|X\|_{L^\infty}$ and the last inequality holds by the fact that $K$ and $K'$ decay exponentially. Indeed, $h^{(d-1)(\gamma-1/2)}e^{-h^{\gamma-1/2}}<h^2$ when $h$ is small enough.  
\end{proof}
Next Lemma is needed when we handle the points near the boundary.  Note that when $x$ is near the boundary, the kernel is no longer symmetric, so we do not expect to obtain the second order term. Moreover, due to the possible nonlinearity of the manifold, in order to fully understand the first order term, we have to take care of the domain we have interest.
\begin{lemma}\label{Lemma:BoundarySymmetrization}
Assume Assumption \ref{Assumption:A}. Take $0<\gamma<1/2$ and $x\in \MM_{h^\gamma}$. Suppose $\min_{y\in\partial\MM}d(x,y)=\tilde{h}$. Fix a normal coordinate $\{\partial_1,\ldots,\partial_{d}\}$ on the geodesic ball $B_{h^\gamma}(x)$ around $x$ so that $x_0=\exp_x(\tilde{h}\partial_d(x))$. 
Divide $\exp_{x}^{-1}(B_{h^\gamma}(x))$ into slices $S_\eta$ defined by
\[
S_{\eta}=\{(\vu,\eta)\in\RR^d;\,\exp_x(\vu,\eta)\in B_{h^\gamma}(x),\,\|(u_1,\ldots,u_{d-1},\eta)\|<h^\gamma\},
\]
where $\eta\in[-h^\gamma,h^\gamma]$ and $\vu=(u_1,\ldots,u_{d-1})\in\RR^{d-1}$; that is, $\exp_{x}^{-1}(B_{h^\gamma}(x))=\cup_{\eta\in[-h^\gamma,h^\gamma]} S_\eta\subset \RR^d$. Define the symmetrization of $S_\eta$ by 
\[
\tilde{S}_\eta:=\cap^{d-1}_{i=1}(R_iS_\eta\cap S_\eta),
\]
where $R_i$ is the reflective operator satisfying $R_i(u_1,\ldots,u_i,\ldots,u_{d-1},\eta)=(u_1,\ldots,-u_i,\ldots,u_{d-1}\eta)$ and $i=1,\ldots,d-1$. Then, we have 
\[
\left|\int_{S_\eta}\int_{-h^\gamma}^{h^\gamma}\ud\eta\ud \vu-\int_{\tilde{S}_\eta}\int_{-h^\gamma}^{h^\gamma}\ud\eta\ud \vu\right|=O(h^{2\gamma}).
\]
\end{lemma}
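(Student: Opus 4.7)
The plan is to exploit the observation that, in the chosen normal coordinates at $x$, the Euclidean ball $\{\|(\vu,\eta)\|<h^\gamma\}$ is itself invariant under each reflection $R_i$, so any failure of $S_\eta$ to equal $\tilde S_\eta$ originates entirely from the cut imposed by $\partial\MM$. The strategy is therefore to pin down $\partial\MM$ inside the coordinate chart and to show that the cutting happens only in a thin $\eta$-strip.

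First I would write $\partial\MM$ locally as a graph over the $\vu$-hyperplane. Since $x_0$ realizes $\min_{y\in\partial\MM}d(x,y)$, the first variation formula implies that the geodesic $t\mapsto\exp_x(t\partial_d)$ meets $\partial\MM$ orthogonally at $t=\tilde h$ with respect to the metric $g$ at $x_0$. In normal coordinates at $x$ the metric satisfies $g_{ij}(\vec y)=\delta_{ij}+O(|\vec y|^2)$, so at $\vec y=(0,\tilde h)$ it differs from the Euclidean metric by $O(\tilde h^2)=O(h^{2\gamma})$, and the Riemannian-orthogonal hyperplane $T_{x_0}\partial\MM$ makes a Euclidean angle $O(h^{2\gamma})$ with the coordinate hyperplane $\{\eta=\tilde h\}$. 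Combined with the uniform bound on the second fundamental form of $\partial\MM$, the implicit function theorem produces a smooth $\phi:\{|\vu|<h^\gamma\}\to\RR$ for which $\partial\MM\cap\{\|(\vu,\eta)\|<h^\gamma\}=\{\eta=\tilde h+\phi(\vu)\}$, with $\phi(0)=0$, $|\nabla\phi(0)|=O(h^{2\gamma})$, and $\|\nabla^2\phi\|_{L^\infty}$ bounded independently of $h$. A Taylor expansion then gives the key uniform estimate $|\phi(\vu)|\leq C h^{2\gamma}$ on the disk $|\vu|\leq h^\gamma$.

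Second I would split $\eta\in[-h^\gamma,h^\gamma]$ into three regimes. For $\eta<\tilde h-Ch^{2\gamma}$, the manifold condition $\eta\leq\tilde h+\phi(\vu)$ is automatic across the whole disk, so $S_\eta$ is the full Euclidean $(d-1)$-disk; it is $R_i$-symmetric and $S_\eta=\tilde S_\eta$. For $\eta>\tilde h+Ch^{2\gamma}$ the manifold condition fails everywhere and $S_\eta=\tilde S_\eta=\emptyset$. The only nontrivial contribution to $|S_\eta\setminus\tilde S_\eta|$ comes from the transition strip $\mathcal T:=\{|\eta-\tilde h|\leq Ch^{2\gamma}\}$. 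On $\mathcal T$ both $S_\eta$ and $\tilde S_\eta$ are contained in a $(d-1)$-disk of radius $\leq h^\gamma$, hence have volume $\leq C h^{(d-1)\gamma}$. Integrating over the length-$O(h^{2\gamma})$ strip yields
\begin{equation*}
\left|\int_{-h^\gamma}^{h^\gamma}\bigl(\mathrm{vol}_{d-1}(S_\eta)-\mathrm{vol}_{d-1}(\tilde S_\eta)\bigr)\,\ud\eta\right|\leq C h^{(d-1)\gamma}\cdot h^{2\gamma}=O(h^{(d+1)\gamma}),
\end{equation*}
which for $d\geq 1$ and small $h$ is absorbed into $O(h^{2\gamma})$, giving the claim.

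The main obstacle is the first step: verifying that the graph representation $\eta=\tilde h+\phi(\vu)$ is valid throughout $|\vu|\leq h^\gamma$ with the stated uniform estimate $|\phi|=O(h^{2\gamma})$. This requires combining the orthogonality statement at $x_0$ (which only controls $\phi$ and $\nabla\phi$ at the single point $\vu=0$) with uniform $C^2$-bounds on the defining function of $\partial\MM$ in any normal-coordinate chart, both independent of $h$. These bounds follow from compactness of $\MM$ and smoothness of $\partial\MM$ in Assumption~\ref{Assumption:A}, but care must be taken that the constants do not deteriorate as $x$ approaches the boundary, where the chart is centered at a boundary-adjacent point.
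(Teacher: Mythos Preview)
Your argument is correct and follows essentially the same route as the paper: both hinge on showing that, in the chosen normal coordinates, $\partial\MM$ is the graph $\eta=\tilde h+\phi(\vu)$ with $|\phi(\vu)|=O(h^{2\gamma})$ on the disk $|\vu|\le h^\gamma$, so that the asymmetry of $S_\eta$ is confined to an $\eta$-strip of width $O(h^{2\gamma})$. The paper obtains the vanishing of the linear part of $\phi$ via the parallel-transport expansion $\myP^{x_0}_x\partial_l(x)=\partial_l(x_0)+O(\tilde h^2)$ and then simply asserts that $\partial\MM\cap B_{h^\gamma}(x)$ is a homogeneous degree-two polynomial graph up to $O(\tilde h^3)$; you reach the same conclusion through the first-variation orthogonality at $x_0$ together with $g_{ij}=\delta_{ij}+O(|\vec y|^2)$. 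Your three-regime decomposition in $\eta$ makes explicit the volume bookkeeping that the paper leaves implicit in its final sentence, and in fact yields the sharper bound $O(h^{(d+1)\gamma})$, which of course is absorbed into the stated $O(h^{2\gamma})$.
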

\begin{proof}
Note that in general the slice $S_\eta$ is not symmetric with related to $(0,\ldots,0,\eta)$, while the symmetrization $\tilde{S}_\eta$ is. Recall the following relationship \cite[(B.23)]{singer_wu:2012} when $y=\exp_x(t\theta)$:
\[
\partial_l(\exp_x(t\theta))=\myP^y_x\partial_l(x)+\frac{t^2}{6}\myP^y_x(\mathcal{R}(\theta,\partial_l(x))\theta)+O(t^3),
\]
where $\theta\in T_x\MM$ is of unit norm and $t\ll 1$, which leads to
\begin{align}\label{bdry_proof:basis_diff}
\myP^{x_0}_{x}\partial_l(x)=\partial_l(x_0)+O(\tilde{h}^2),
\end{align}
for all $l=1,\ldots, d$.
Also note that up to error $O(\tilde{h}^3)$, we can express $\partial\MM\cap B_{h^\gamma}(x)$ by a homogeneous degree 2 polynomial with variables $\{\myP^{x_0}_{x}\partial_1(x),\ldots,\myP^{x_0}_x \partial_{d-1}(x)\}$. 
Thus the difference between $\tilde{S}_\eta$ and $S_\eta$ is $O(h^{2\gamma})$ since $\tilde{h}\leq h^\gamma$.  
\end{proof}
Next we elaborate the error term in the kernel approximation. 
\begin{lemma}\label{Kf}
Assume Assumption \ref{Assumption:A} and Assumption \ref{Assumption:K} hold. Take $0<\gamma<1/2$. Fix $x\notin \MM_{h^\gamma}$ and denote $C_x$ to be the cut locus of $x$. Take a vector-valued function $F:\MM\to\RR^q$, where $q\in\NN$ and $F \in C^4(\MM\backslash C_x)\cap L^\infty(\MM)$. Then, when $h$ is small enough, we have
\[
\int_{\MM} h^{-d/2}K_h(x,y)F(y)\ud V(y)=F(x)+h\frac{\mu^{(0)}_{1,2}}{d}\left(\frac{\Delta F(x)}{2}+ w(x)F(x)\right)+O(h^2),
\]
where $ w(x)=s(x) +\frac{\mu^{(1)}_{1,3}z(x)}{24|S^{d-1}|}$, $s(x)$ is the scalar curvature at $x$, and $ z(x)=\int_{S^{d-1}}\|\Pi(\theta,\theta)\|\ud \theta$ and the error term depends on $\|F^{(\ell)}\|_{L^\infty}$, where $\ell=0,1,\ldots,4$.

Fix $x\in \MM_{h^\gamma}$. Then, when $h$ is small enough, we have
\[
\int_{\MM} h^{-d/2}K_h(x,y)F(y)\ud V(y)=m_{h,0}F(x)+\sqrt{h}m_{h,1} \nabla_{\partial_d} F(x)+O(h^{2\gamma}),
\]
where $O(h^{2\gamma})$ depends on $\|F\|_{L^\infty}$, $\|F^{(1)}\|_{L^\infty}$ and $\|F^{(2)}\|_{L^\infty}$ and $m_{h,0}$ and $m_{h,1}$ are of order $O(1)$ and defined in (\ref{meps0}).
\end{lemma}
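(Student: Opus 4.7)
The plan is to reduce both cases to an integral over a small geodesic ball, parametrize via the exponential map at $x$, and then Taylor-expand the three ingredients of the integrand (the kernel, the function $F$, and the volume element) to one higher order than needed in order to read off the leading terms.

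For the interior case $x\notin\MM_{h^\gamma}$, I would first invoke Lemma \ref{Lemma:extra} to discard the contribution from $\MM\setminus\widetilde{B}_{h^\gamma}(x)$ at the cost of an $O(h^2)$ error that depends only on $\|F\|_{L^\infty}$. On the remaining ball I fix a normal coordinate $\{\partial_i\}_{i=1}^d$ at $x$ and write $y=\exp_x(t\theta)$ with $\theta\in S^{d-1}$ and $t\in[0,h^\gamma)$. Three expansions are needed. First, the extrinsic-to-intrinsic relation $\|x-y\|_{\RR^p}^2=t^2-\frac{t^4}{12}\|\Pi(\theta,\theta)\|^2+O(t^6)$ lets me expand $K(\|x-y\|_{\RR^p}/\sqrt h)=K(t/\sqrt h)+K'(t/\sqrt h)\frac{t^3\|\Pi(\theta,\theta)\|^2}{24\sqrt h}+O(t^6/h)$; this is the source of the $\mu^{(1)}_{1,3}z(x)$ contribution in $w(x)$. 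Second, the volume element expands as $\ud V=(1-\frac{1}{6}\Ric(\theta,\theta)t^2+O(t^3))\,t^{d-1}\ud t\ud\theta$, producing the scalar-curvature contribution to $w(x)$. Third, the Taylor expansion $F(\exp_x(t\theta))=F(x)+t\,\theta\cdot\nabla F(x)+\tfrac{t^2}{2}\Hess F(x)[\theta,\theta]+\tfrac{t^3}{6}\nabla^3F[\theta,\theta,\theta]+O(t^4)$ supplies the $\Delta F(x)$ term after integrating against $\theta$ on $S^{d-1}$. Because all three expansions are truncated symmetrically in $\theta\mapsto-\theta$, every odd-order angular integral vanishes identically (for $x\notin\MM_{h^\gamma}$ the truncated ball is symmetric), so the $O(t)$ and $O(t^3)$ contributions integrate to zero and only the $O(1)$ and $O(t^2)$ terms survive. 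Assembling these with the definition of $\mu^{(k)}_{r,l}$ and normalizing by $h^{-d/2}$ produces the claimed expansion. The error term is controlled by $\|F^{(\ell)}\|_{L^\infty}$ for $\ell\le 4$ because the Taylor remainders for $F$ and the higher-order geometric terms feed into it uniformly on $\widetilde{B}_{h^\gamma}(x)$.

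For the boundary case $x\in\MM_{h^\gamma}$, the same truncation-plus-normal-coordinates setup applies, but the domain $\exp_x^{-1}(B_{h^\gamma}(x))=\cup_{\eta}S_\eta$ is no longer symmetric under reflection in any horizontal coordinate, which is exactly why the first-order (in $\sqrt h$) term survives. The approach is to write the integral as $\int_{-h^\gamma}^{h^\gamma}\int_{S_\eta}(\cdots)\,\ud\vu\,\ud\eta$ and to replace each slice $S_\eta$ by its symmetrization $\tilde S_\eta$, using Lemma \ref{Lemma:BoundarySymmetrization} to absorb the replacement error into $O(h^{2\gamma})\|F\|_{L^\infty}$. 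On the symmetrized domain, every odd power of a horizontal coordinate $u_i$ $(i<d)$ integrates out, so only powers of $\eta$ survive; expanding $F(\exp_x(\vu,\eta))=F(x)+\eta\,\nabla_{\partial_d}F(x)+O(\|(\vu,\eta)\|^2)$ and $K_h$ to leading order gives the two terms $m_{h,0}F(x)$ and $\sqrt h\,m_{h,1}\nabla_{\partial_d}F(x)$ by the very definitions (\ref{meps0}). The quadratic Taylor remainders in $F$ and the $O(\tilde h^2)$ distortion of the boundary frame from (\ref{bdry_proof:basis_diff}), together with the $O(h^{2\gamma})$ from symmetrization, combine into the stated $O(h^{2\gamma})$ remainder, which depends only on $\|F\|_{L^\infty}$, $\|F^{(1)}\|_{L^\infty}$, $\|F^{(2)}\|_{L^\infty}$ since we only need two derivatives of $F$ at this order.

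The main obstacle will be bookkeeping: tracking precisely which geometric expansion produces which constant in $w(x)$, and in particular making sure the $\|\Pi(\theta,\theta)\|^2$ term from the kernel expansion (a second-fundamental-form contribution, extrinsic) is kept separate from the $\Ric$ contribution (intrinsic) so that after angular integration they assemble into $s(x)+\frac{\mu^{(1)}_{1,3}z(x)}{24|S^{d-1}|}$. On the boundary side, the delicate point is showing that the symmetrization error is truly $O(h^{2\gamma})\|F\|_{L^\infty}$ and not worse: this rests on the fact that, in the chosen normal coordinates at $x$ with $x_0=\exp_x(\tilde h\partial_d(x))$, the boundary is locally a graph over the first $d-1$ coordinates with quadratic leading behavior, so the asymmetric sliver discarded by passing from $S_\eta$ to $\tilde S_\eta$ has measure $O(h^{(d-1)\gamma}\cdot h^{2\gamma})$, which after dividing by $h^{d/2}$ and pairing with the uniformly bounded kernel produces exactly $O(h^{2\gamma})$ once the $h^{-d/2}$ prefactor is absorbed by rescaling $t=s\sqrt h$ in the radial direction. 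All other contributions are routine truncated-kernel bounds identical to the boundary-free case.
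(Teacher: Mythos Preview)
Your proposal is correct and follows essentially the same route as the paper's proof: invoke Lemma~\ref{Lemma:extra} to localize, pass to geodesic polar (respectively, sliced) coordinates via $\exp_x$, Taylor-expand the kernel, $F$, and the volume form, kill odd-order angular integrals by symmetry (using Lemma~\ref{Lemma:BoundarySymmetrization} near $\partial\MM$), and collect the surviving terms into the stated constants. The only discrepancies are cosmetic---your kernel expansion carries $\|\Pi(\theta,\theta)\|^2$ where the paper and the definition of $z(x)$ have $\|\Pi(\theta,\theta)\|$, and your Ricci term in the volume element carries the opposite sign from the paper's displayed expansion---both of which you will reconcile automatically when matching to the stated $w(x)$.
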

\begin{proof}
By Lemma \ref{Lemma:extra}, we can focus our analysis on $\widetilde{B}_{h^\gamma}(x)$ since $F$ is a section of the trivial bundle. Also, we can view $F$ as $q$ functions defined on $\MM$ with the same regularity. Then, the proof is exactly the same as that of \cite[Lemma 8]{coifman_lafon:2006} except the explicit dependence of the error term on $F$. Since the main point is the uniform bound of the third derivative of the embedding function $\iota$ and $F$ on $\MM$, we simply list the calculation steps:
\begin{align*}
&\int_{\widetilde{B}_{h^\gamma}(x)} K_h(x,y)F(y)\ud V(y)=\int_{\widetilde{B}_{h^\gamma}(x)}K\Big(\frac{\|x-y\|_{\RR^p}}{\sqrt{h}}\Big)F(y)\ud V(y)\\ 
=&\int_{S^{d-1}}\int_0^{h^\gamma} \left[K\left(\frac{t}{\sqrt{h}}\right)+K'\left(\frac{t}{\sqrt{h}}\right)\frac{\|\Pi(\theta,\theta)\|t^3}{24\sqrt{h}}+O\left(\frac{t^6}{h}\right)\right]\\
&\qquad\qquad\times \big[F(x)+\nabla_\theta F(x)t+\nabla^2_{\theta,\theta}F(x)\frac{t^2}{2}+\nabla^3_{\theta,\theta,\theta}F(x)\frac{t^3}{6}+O(t^3)\big]\nonumber\\
&\qquad\qquad\times \big[t^{d-1}+\Ric(\theta,\theta)t^{d+1}+O(t^{d+2})\big]\ud t\ud\theta.
\end{align*}
By a direct expansion, the regularity assumption and the compactness of $\MM$, we conclude the first part of the proof.
 
{\allowdisplaybreaks

Next, suppose $x\in\MM_{h^\gamma}$. By Taylor's expansion and Lemma \ref{Lemma:BoundarySymmetrization}, we obtain
\begin{align*}
&\int_{B_{h^\gamma}(x)} h^{-d/2}K_{h}(x,y) F(y)\ud V(y) \\
= &\, \int_{S_\eta}\int^{h^\gamma}_{-h^\gamma} h^{-d/2}\left[K\left( \frac{\sqrt{\|\vu\|^2+\eta^2}}{\sqrt{h}} \right)+K'\left( \frac{\sqrt{\|\vu\|^2+\eta^2}}{\sqrt{h}} \right) \frac{\|\Pi((\vu,\eta),(\vu,\eta))\|(\|\vu\|^2+\eta^2)^{3/2}}{24\sqrt{h}}\right.\\
&\left.\qquad\qquad +O\left(\frac{(\|\vu\|^2+\eta^2)^3}{h}\right)\right] \left(F(x)+\sum_{i=1}^{d-1}u_i\nabla_{\partial_i}F(x)+\eta\nabla_{\partial_d} F(x)+O(\tilde{h}^2)\right) \ud \eta\ud \vu\\
=&\, \int_{\tilde{S}_\eta}\int^{h^\gamma}_{-h^\gamma} h^{-d/2}K\left( \frac{\sqrt{\|\vu\|^2+\eta^2}}{\sqrt{h}} \right) \left(F(x)+\sum_{i=1}^{d-1}u_i\nabla_{\partial_i}F(x)+\eta\nabla_{\partial_d} F(x)+O(\tilde{h}^2)\right) \ud \eta\ud \vu +O(h^{2\gamma})\\
=&\, \int_{\tilde{S}_\eta}\int^{h^\gamma}_{-h^\gamma} h^{-d/2}K\left( \frac{\sqrt{\|\vu\|^2+\eta^2}}{\sqrt{h}} \right) \left(F(x)+\eta\nabla_{\partial_d} F(x)+O(\tilde{h}^2)\right) \ud \eta\ud \vu +O(h^{2\gamma})\\
=&\,m_{h,0}F(x)+\sqrt{h}m_{h,1} \nabla_{\partial_d} F(x)+O(h^{2\gamma}),
\end{align*}
where the third equality holds due to the symmetry of the kernel and
\begin{equation}\label{meps0}
\left\{
\begin{array}{l}
\displaystyle m_{h,0}:=\int_{\tilde{S}_\eta}\int^{h^\gamma}_{-h^\gamma} h^{-d/2}K\left( \frac{\sqrt{\|u\|^2+\eta^2}}{\sqrt{h}}\right)\ud \eta\ud x=O(1)\\
\displaystyle m_{h,1}:=\int_{\tilde{S}_\eta}\int^{h^\gamma}_{-h^\gamma} h^{-d/2-1/2}K\left( \frac{\sqrt{\|u\|^2+\eta^2}}{\sqrt{h}}\right)\eta\ud \eta\ud x=O(1).
\end{array}
\right.
\end{equation} 
}
\end{proof}

With the above Lemmas, we are able to finish the proof of Theorem \ref{thm:pointwise_conv:approx_of_identity}.

\begin{proof}[Proof of Theorem \ref{thm:pointwise_conv:approx_of_identity}]
Take $0<\gamma<1/2$. By Lemma \ref{Lemma:extra}, we can focus our analysis of the numerator and denominator of $T_{h,\alpha}X$ on $\widetilde{B}_{h^\gamma}(x)$, no matter $x$ is away from the boundary or close to the boundary. Suppose $x\notin \MM_{h^\gamma}$. 
By Lemma \ref{Kf}, we get
\begin{equation*} 
p_h(y)=\sfp(y)+h\frac{\mu^{(0)}_{1,2}}{d}\left(\frac{\Delta \sfp(y)}{2}+w(y)\sfp(y)\right)+O(h^{3/2}),
\end{equation*}
which leads to
\begin{equation}\label{approximatepalpha}
\frac{\sfp(y)}{p^{\alpha}_h(y)}=\sfp^{1-\alpha}(y)\left[1-\alpha h\frac{\mu^{(0)}_{1,2}}{d}\left(w(y)+\frac{\Delta \sfp(y)}{2\sfp(y)}\right)\right]+O(h^{3/2}).
\end{equation}
Plug (\ref{approximatepalpha}) into the numerator of $T_{h,\alpha}X(x)$:
\begin{align*}
\begin{split}
&\ \ \ \ \int_{\widetilde{B}_{h^\gamma}(x)} K_{h,\alpha}(x,y)\myP_{y}^{x} X(y)\sfp(y)\ud V(y) =p_h^{-\alpha}(x)\int_{\widetilde{B}_{h^\gamma}(x)} K_{h}(x,y)\myP_{y}^{x} X(y)p_h^{-\alpha}(y)\sfp(y)\ud V(y)\\
&=p_h^{-\alpha}(x)\int_{\widetilde{B}_{h^\gamma}(x)} K_{h}(x,y)\myP_{y}^{x} X(y)\sfp^{1-\alpha}(y)\left[1-\alpha h\frac{\mu^{(0)}_{1,2}}{d}\left(w(y)+\frac{\Delta \sfp(y)}{2\sfp(y)}\right)\right]\ud V(y)+O(h^{d/2+3/2})\\
&:=p_h^{-\alpha}(x)\left(A- h\frac{\alpha \mu^{(0)}_{1,2}}{d}B\right)+O(h^{d/2+3/2}).
\end{split}
\end{align*}
where
\[
\left\{
\begin{array}{l}
\displaystyle A:=\int_{\widetilde{B}_{h^\gamma}(x)} K_{h}(x,y)\myP_{y}^{x} X(y)\sfp^{1-\alpha}(y)\ud V(y),\\
\displaystyle B:=\int_{\widetilde{B}_{h^\gamma}(x)} K_{h}(x,y)\myP_{y}^{x} X(y)\sfp^{1-\alpha}(y)\left(w(y)+\frac{\Delta \sfp(y)}{2\sfp(y)}\right)\ud V(y).\end{array}\right.\]
When we evaluate $A$ and $B$, the odd monomials in the integral vanish because the kernel we use has the symmetry property. By Taylor's expansion, $A$ becomes
\begin{align*}
A=&\int_{S^{d-1}}\int_0^{h^\gamma} \left[K\left(\frac{t}{\sqrt{h}}\right)+K'\left(\frac{t}{\sqrt{h}}\right)\frac{\|\Pi(\theta,\theta)\|t^3}{24\sqrt{h}}+O\left(\frac{t^6}{h}\right)\right]\\
&\times\left[X(x)+\nabla_\theta X(x)t+\nabla^2_{\theta,\theta}X(x)\frac{t^2}{2}+\nabla^3_{\theta,\theta,\theta}X(x)\frac{t^3}{6}+O(t^4)\right] \\
&\times\left[\sfp^{1-\alpha}(x)+\nabla_\theta(\sfp^{1-\alpha})(x)t+\nabla^2_{\theta,\theta}(\sfp^{1-\alpha})(x)\frac{t^2}{2}+\nabla^3_{\theta,\theta,\theta}(\sfp^{1-\alpha})(x)\frac{t^3}{6}+O(t^3)\right] \\
&\times\left[t^{d-1}+\Ric(\theta,\theta)t^{d+1}+O(t^{d+2})\right]\ud t\ud\theta.
\end{align*}
Due to the fact that $K$ and $K'$ decay exponentially, by the same argument as that of Lemma \ref{Lemma:extra}, we can replace the integrals $\int_{S^{d-1}}\int_0^{h^\gamma}$ by $\int_{S^{d-1}}\int_0^\infty$ by paying the price of error of order $h^2$ which depends on $\|X^{(\ell)}\|_{L^\infty}$, where $\ell=0,1,\ldots,4$. Thus, after rearrangement we have 
\begin{align*}
A&=\sfp^{1-\alpha}(x)X(x)\int_{S^{d-1}}\int_0^\infty \Big\{K\left(\frac{t}{\sqrt{h}}\right)\left[1+\Ric(\theta,\theta)t^2\right]+ K'\left(\frac{t}{\sqrt{h}}\right)\frac{\|\Pi(\theta,\theta)\|t^3}{24\sqrt{h}}\Big\}t^{d-1}\ud t\ud\theta\\
&\quad+\sfp^{1-\alpha}(x)\int_{S^{d-1}}\int_0^\infty K\left(\frac{t}{\sqrt{h}}\right)\nabla^2_{\theta,\theta}X(x)\frac{t^{d+1}}{2}\ud t\ud\theta \\
&\quad+X(x)\int_{S^{d-1}}\int_0^\infty K\left(\frac{t}{\sqrt{h}}\right)\nabla^2_{\theta,\theta}(\sfp^{1-\alpha})(x)\frac{t^{d+1}}{2}\ud t\ud\theta\\
&\quad+\int_{S^{d-1}}\int_0^\infty K\left(\frac{t}{\sqrt{h}}\right)\nabla_{\theta}X(x)\nabla_\theta(\sfp^{1-\alpha})(x)t^{d+1}\ud t\ud\theta+O(h^{d/2+ 2}), 
\end{align*} 
where $O(h^{d/2+2})$ depends on  $\|X^{(\ell)}\|_{L^\infty}$,  $\ell=0,1,\ldots,4$.
Following the same argument as that in \cite{singer_wu:2012}, we have
\begin{align*}
\int_{S^{d-1}}\nabla^2_{\theta,\theta}X(x)\ud\theta=\frac{|S^{d-1}|}{d}\nabla^2 X(x)
\quad\mbox{and}\quad
\int_{S^{d-1}}\Ric(\theta,\theta)d\theta=\frac{|S^{d-1}|}{d}s(x).
\end{align*}
Therefore, 
\begin{align*}
A=\,&h^{d/2}\sfp^{1-\alpha}(x)\left\{\left(1+\frac{h \mu^{(0)}_{1,2}}{d}\frac{\Delta(\sfp^{1-\alpha})(x)}{2\sfp^{1-\alpha}(x)}+\frac{h \mu^{(0)}_{1,2}}{d}w(x)\right)X(x)+\frac{h \mu^{(0)}_{1,2}}{2d}\nabla^2X(x)\right\}\\
&\qquad+h^{d/2+1}\frac{\mu^{(0)}_{1,2}}{d} \nabla X(x)\cdot\nabla(\sfp^{1-\alpha})(x)+O(h^{d/2+ 2}),
\end{align*}
where $O(h^{d/2+ 2})$ depends on $\|X^{(\ell)}\|_{L^\infty}$, $\ell=0,1,\ldots,4$. 

To evaluate $B$, denote $Q(y):=\sfp^{1-\alpha}(y)\left(w(y)+\frac{\Delta \sfp(y)}{2\sfp(y)}\right)\in C^2(\MM)$ to simplify notation. We have
\begin{align*}
B=\,&\int_{B_{h^\gamma}(x)} K_{h}(x,y)\myP_{y}^{x} X(y)Q(y)\ud V(y)\\
=\,&\int_{S^{d-1}}\int_0^{h^\gamma} \left[K\left(\frac{t}{\sqrt{h}}\right)+O\left(\frac{t^3}{\sqrt{h}}\right)\right]\left[X(x)+\nabla_\theta X(x)t+O(t^2)\right]\left[Q(x)+\nabla_\theta Q(x)t+O(t^2)\right]\left[t^{d-1}+O(t^{d+1})\right]\ud t\ud\theta\\
=\,&h^{d/2}X(x)Q(x)+O(h^{d/2+1 }),
\end{align*} 
where $O(h^{d/2+1})$ depends on $\|X\|_{L^\infty},\|X^{(1)}\|_{L^\infty}$ and $\|X^{(2)}\|_{L^\infty}$.
In conclusion, the numerator of $T_{h,\alpha}X(x)$ becomes
\begin{align*}
&\quad h^{d/2} \frac{\sfp^{1-\alpha}(x)}{p_h^{\alpha}(x)}\left\{1+h \frac{\mu^{(0)}_{1,2}}{d}\left[\frac{\Delta(\sfp^{1-\alpha})(x)}{2\sfp^{1-\alpha}(x)}-\alpha\frac{\Delta \sfp(x)}{2\sfp(x)}\right]\right\}X(x)\\
&+h^{d/2+1}\frac{\mu^{(0)}_{1,2}\sfp^{1-\alpha}(x)}{d p^\alpha_h(x)} \left\{ \frac{\nabla^2X(x)}{2}
+\frac{\nabla X(x)\cdot\nabla(\sfp^{1-\alpha})(x)}{\sfp^{1-\alpha}(x)}\right\}+O(h^{d/2+ 2}),
\end{align*} 
where $O(h^{d/2+ 2})$ depends on  $\|X^{(\ell)}\|_{L^\infty}$, $\ell=0,1,\ldots,4$.
Similar calculation of the denominator of the $T_{h,\alpha} X(x)$ gives
\begin{align*}
h^{d/2}\frac{\sfp^{1-\alpha}(x)}{p_h^{\alpha}(x)}\left\{1+h\frac{ \mu^{(0)}_{1,2}}{d}\left(\frac{\Delta(\sfp^{1-\alpha})(x)}{2\sfp^{1-\alpha}(x)}-\alpha\frac{\Delta \sfp(x)}{2\sfp(x)}\right)\right\}+O(h^{d/2+ 2}).
\end{align*}
Putting all the above together, we have when $x\in \MM\backslash \MM_{h^\gamma}$,
\begin{align*}
T_{h,\alpha} X(x)=X(x)+h\frac{\mu^{(0)}_{1,2}}{2d}\left(\nabla^2X(x)+\frac{2\nabla X(x)\cdot\nabla(\sfp^{1-\alpha})(x)}{\sfp^{1-\alpha}(x)}\right)+O(h^{ 2}),
\end{align*}
where $O(h^{ 2})$ depends on  $\|X^{(\ell)}\|_{L^\infty}$, $\ell=0,1,\ldots,4$.

Next we consider the case when $x\in \MM_{h^\gamma}$. By Lemma \ref{Kf}, we get
\begin{equation*} 
p_h(y)=m_{h,0}\sfp(y)+\sqrt{h}m_{h,1}\partial_d\sfp(x)+O(h^{2\gamma}),
\end{equation*}
which leads to
\begin{equation*} 
\frac{\sfp(y)}{p^{\alpha}_h(y)}=\frac{\sfp^{1-\alpha}(y)}{m_{h,0}^\alpha}\left[1-\sqrt{h}\frac{\alpha m_{h,1}}{m_{h,0}} \frac{\partial_d\sfp(y)}{\sfp(y)}+O(h^{2\gamma})\right].
\end{equation*} 
By Taylor's expansion and Lemma \ref{Lemma:BoundarySymmetrization}, the numerator of $T_{h,\alpha}X$ becomes
\begin{align}
&\int_{B_{h^\gamma}(x) } K_{h,\alpha}(x,y) \myP_{y}^{x}X(y)\sfp(y)\ud V(y)\nonumber\\
=\,&\frac{p_{h}^{-\alpha}(x)}{m_{h,0}^\alpha}\int_{S_\eta}\int^{h^\gamma}_{-h^\gamma} K\left( \frac{\sqrt{\|\vu\|^2+\eta^2}}{\sqrt{h}} \right) \left(X(x)+\sum_{i=1}^{d-1}u_i\nabla_{\partial_i}X(x)+\eta\nabla_{\partial_d} X(x)+O(h^2)\right)\nonumber \\
&\qquad\qquad\qquad \times\left(\sfp^{1-\alpha}(x)+\sum_{i=1}^{d-1}u_i\nabla_{\partial_i}\sfp^{1-\alpha}(x)+\eta\nabla_{\partial_d} \sfp^{1-\alpha}(x)+O(h^2)\right)\nonumber\\
&\qquad\qquad\qquad \times \left[1-\sqrt{h}\frac{\alpha m_{h,1}}{m_{h,0}} \frac{\partial_d\sfp(y)}{\sfp(y)} \right]\ud \eta\ud \vu+O(h^{d/2+2\gamma})\nonumber\\
=\, &\frac{p_h^{-\alpha}(x)}{m_{h,0}^\alpha}\int_{\tilde{S}_\eta}\int^{h^\gamma}_{-h^\gamma} K\left( \frac{\sqrt{\|\vu\|^2+\eta^2}}{\sqrt{h}} \right)\left(X(x)+\sum_{i=1}^{d-1}u_i\nabla_{\partial_i}X(x)+\eta\nabla_{\partial_d} X(x)+O(h^2)\right)\nonumber \\
&\qquad\qquad\qquad \times\left(\sfp^{1-\alpha}(x)+\sum_{i=1}^{d-1}u_i\nabla_{\partial_i}\sfp^{1-\alpha}(x)+\eta\nabla_{\partial_d} \sfp^{1-\alpha}(x)+O(h^2)\right)\nonumber\\
&\qquad\qquad\qquad \times \left[1-\sqrt{h}\frac{\alpha m_{h,1}}{m_{h,0}} \frac{\partial_d\sfp(x)}{\sfp(x)} \right]\ud \eta\ud \vu+O(h^{d/2+2\gamma})\nonumber
\end{align}
where $O(h^{d/2+2\gamma})$ depends on  $\|X^{(\ell)}\|_{L^\infty}$, $\ell=0,1,2$, and the last equality holds due to Lemma \ref{Lemma:BoundarySymmetrization}. The symmetry of the kernel implies that for $i=1,\ldots,d-1$,
\begin{equation*} 
\int_{\tilde{S}_\eta}K\left( \frac{\sqrt{\|\vu\|^2+\eta^2}}{\sqrt{h}}\right)u^i\ud \vu=0,
\end{equation*}
and hence the numerator of $T_{h,\alpha}X(x)$ becomes
\begin{equation*} 
h^{d/2}\frac{m_{h,0}^{1-\alpha}}{p_h^{\alpha}(x)}\left[X(x)\sfp^{1-\alpha}(x)+\sqrt{h}\frac{m_{h,1}}{m_{h,0}} \left(X(x) \partial_d \sfp^{1-\alpha}(x)+\sfp^{1-\alpha}(x)\nabla_{\partial_d}X(x)+\frac{\alpha X(x)\partial_d \sfp(x)}{m_{h,0}\sfp(x)}\right)\right]+O(h^{d/2+2\gamma}) ,
\end{equation*}
where $O(h^{d/2+2\gamma})$ depends on $\|X\|_{L^\infty},\|X^{(1)}\|_{L^\infty}$ and $\|X^{(2)}\|_{L^\infty}$ and $m_{h,0}$ and $m_{h,1}$ are defined in (\ref{meps0}).
Similarly, the denominator of $T_{h,\alpha}X$ can be expanded as:
\begin{equation*} 
\int_{B_{h^\gamma}(x)}K_{h,\alpha}(x,y)\sfp(y)\ud V(y)=h^{d/2}\frac{m_{h,0}^{1-\alpha}}{p_h^{\alpha}(x)}\left[\sfp^{1-\alpha}(x)+\sqrt{h}\frac{m_{h,1}}{m_{h,0}} \left( \partial_d \sfp^{1-\alpha}(x)+ \frac{\alpha \partial_d \sfp(x)}{m_{h,0}\sfp(x)} \right)\right]+O(h^{d/2+2\gamma}).
\end{equation*}
Moreover, by (\ref{bdry_proof:basis_diff}), we have
\begin{align*} 
\myP^{x_0}_{x}\partial_l(x)=\partial_l(x_0)+O(h^{2\gamma}),
\end{align*}
for all $l=1,\ldots, d$. 
Thus, together with the expansion of the numerator and denominator of $T_{h,\alpha}X$, we have the following asymptotic expansion:
\begin{equation*} 
T_{h,\alpha} X(x)=X(x)+\sqrt{h}\frac{m_{h,1}}{m_{h,0}}\myP_{x_0}^x\nabla_{\partial_d}X(x_0)+O(h^{2\gamma}),
\end{equation*}
where $O(h^{2\gamma})$ depends on $\|X^{(\ell)}\|_{L^\infty}$, $\ell=0,1,2$, which finish the proof.
\end{proof}

\section{[Proof of Theorem \ref{thm:pointwise_conv_to_integral}]}
The proof is a generalization of that of \cite[Theorem B.3]{singer_wu:2012} to the principal bundle structure. Note that in \cite[Theorem B.3]{singer_wu:2012} only the uniform sampling p.d.f. case was discussed. 
The main ingredient in the stochastic fluctuation analysis of the GCL when $n$ is finite is the large deviation analysis. We emphasize that since the term we have interest, the connection Laplacian (or Laplace-Beltrami operator when we consider GL), is the $2$-th order term, that is, $h$, which is much smaller than the $0$-th order term, by applying the Berstein's inequality with the large deviation much smaller than $h$, we are able to achieve this rate. Here, for the sake of self-containment and clarifying some possible confusions in \cite{singer:2006}, we provide a detailed proof for this large deviation bound. 

\begin{lemma}\label{lemma:LargeDeviation}
Assume Assumption \ref{Assumption:A}, Assumption \ref{Assumption:B} and Assumption \ref{Assumption:K} hold. With probability higher than $1-O(1/n^2)$, the following kernel density estimation holds for all $i=1,\ldots,n$ 
\begin{align*}
\widehat{p}_{h,n}(x_i)=p_h(x_i) +O\left(\frac{\sqrt{\log(n)}}{n^{1/2}h^{d/4}}\right).
\end{align*}

Take $f\in C^4(\MM)$ and $1/4<\gamma<1/2$.  
For the points away from the boundary, suppose we focus on the situation that the stochastic fluctuation of $\frac{\sum_{j=1}^n K_{h}(x_i,x_j) (f(x_j)-f(x_i))}{\sum_{j=1}^n K_{h}(x_i,x_j)}$ is $o(h)$ for all $i$. Then, with probability higher than $1-O(1/n^2)$, the following holds for all $x_i\notin \MM_{h^\gamma}$:
\begin{align*}
\frac{1}{h}\frac{\sum_{j=1}^n K_{h}(x_i,x_j) (f(x_j)-f(x_i))}{\sum_{j=1}^n K_{h}(x_i,x_j)}&\,=\left(\frac{T_{h,0}f-f}{h}\right)(x_i)+O\left(\frac{\sqrt{\log(n)}}{n^{1/2}h^{d/4+1/2}}\right).
\end{align*} 
For the points near the boundary, suppose we focus on the situation that the stochastic fluctuation of $\frac{\sum_{j=1}^n K_{h}(x_i,x_j) (f(x_j)-f(x_i))}{\sum_{j=1}^n K_{h}(x_i,x_j)}$ is $o(\sqrt{h})$ for all $i$. Then, with probability higher than $1-O(1/n^2)$, the following holds for all $x_i\in \MM_{h^\gamma}$:
\begin{align*}
\frac{\sum_{j=1}^n K_{h}(x_i,x_j) (f(x_j)-f(x_i))}{\sum_{j=1}^n K_{h}(x_i,x_j)}&\,=(T_{h,0}f-f)(x_i)+O\left(\frac{\sqrt{\log(n)}}{n^{1/2}h^{d/4-1/4}}\right). 
\end{align*} 

\end{lemma}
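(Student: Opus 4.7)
The argument is a standard application of Bernstein's inequality for i.i.d.\ sums of bounded random variables, combined with a union bound over the $n$ sample points that generates the $\sqrt{\log n}$ factor. The setting differs from the textbook case only in that the kernel-moment computations rely on the geometric expansions established in Lemma \ref{Kf}, and in the need to treat points near $\partial\MM$ separately.

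\textbf{Concentration of the basic ingredients.} For the density statement, fix $x\in\MM$ and write $\widehat{p}_{h,n}(x)-p_h(x)=\frac{1}{n}\sum_{k=1}^n Z_k(x)$ with $Z_k(x):=K_h(x,x_k)-p_h(x)$, which is an i.i.d.\ centered sum bounded by $\|K\|_{L^\infty}$. Applying Lemma \ref{Kf} to the kernel $K^2$, the variance $\vV[Z_k(x)]\leq \int K_h(x,y)^2\sfp(y)\,\ud V(y)$ is $O(h^{d/2})$ uniformly in $x$. Bernstein's inequality then yields $\bP(|\widehat{p}_{h,n}(x)-p_h(x)|>t)\leq 2\exp(-nt^2/(Ch^{d/2}+C't))$; picking $t$ so that the right side is $\leq 2n^{-3}$ (within the Gaussian regime, which is valid under the hypothesized range of $h$) and taking a union bound over $x\in\{x_1,\ldots,x_n\}$ yields the claimed density bound with probability $1-O(n^{-2})$. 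The same scheme applied to $W_k(x_i):=K_h(x_i,x_k)(f(x_k)-f(x_i))$ handles the numerator. The key variance input is
\begin{equation*}
\vV[W_k(x_i)]\leq \int K_h(x_i,y)^2(f(y)-f(x_i))^2\sfp(y)\,\ud V(y)=O(h^{d/2+1}),
\end{equation*}
obtained from $|f(y)-f(x_i)|=O(\sqrt{h})$ on the effective support of $K_h(x_i,\cdot)$ (using $f\in C^4(\MM)$), together with the uniform bound $|W_k|\lesssim\sqrt{h}$.

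\textbf{From numerator/denominator to ratio.} Writing $A_h(x):=p_h(x)(T_{h,0}f-f)(x)$ and $\widehat{A}_n(x_i)$ for its empirical counterpart, the algebraic identity
\begin{equation*}
\frac{\widehat{A}_n(x_i)}{\widehat{p}_{h,n}(x_i)}-\frac{A_h(x_i)}{p_h(x_i)}=\frac{\widehat{A}_n(x_i)-A_h(x_i)}{\widehat{p}_{h,n}(x_i)}-\frac{A_h(x_i)}{p_h(x_i)}\cdot\frac{\widehat{p}_{h,n}(x_i)-p_h(x_i)}{\widehat{p}_{h,n}(x_i)}
\end{equation*}
lets me combine the two concentration bounds, using the uniform positivity of $p_h$ and $\widehat{p}_{h,n}$ from Lemma \ref{lemma:boundsForAllKindsOfK} to control the denominators. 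The multiplicative factor $A_h/p_h=T_{h,0}f-f$ has size $O(h)$ for $x_i\notin\MM_{h^\gamma}$ and $O(\sqrt{h})$ for $x_i\in\MM_{h^\gamma}$ by Theorem \ref{thm:pointwise_conv:approx_of_identity}, and this dichotomy propagates into the two different error scalings appearing in the statement.

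\textbf{Main obstacle.} The real work is bookkeeping rather than concentration per se: one must verify that the choice $t\sim\sqrt{h^{d/2+1}\log n/n}$ keeps the Bernstein bound in its Gaussian (rather than Poisson) regime throughout the admissible range of $h$, propagate the fluctuation estimates through the ratio linearization uniformly in $i$, and confirm that the boundary-induced asymmetry (which enters only via the bias in Lemma \ref{Kf} and Lemma \ref{Lemma:BoundarySymmetrization}) shifts the signal from $O(h)$ to $O(\sqrt{h})$ without altering the variance order. This asymmetry is the origin of the relaxed $o(\sqrt{h})$ fluctuation hypothesis near $\partial\MM$ and of the slightly weaker rate $\sqrt{\log n}/(n^{1/2}h^{d/4-1/4})$ that the statement records in that regime.
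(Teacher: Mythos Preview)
Your approach coincides with the paper's: Bernstein's inequality for the numerator and denominator separately, with the second moments supplied by Lemma~\ref{Kf}, then a union bound over the $n$ centers and a first-order linearization of the ratio. The paper normalizes by $h^{-d/2}$ upfront and uses the cruder sup bound $|h^{-d/2}K_h(x_i,x_j)(f(x_j)-f(x_i))|=O(h^{-d/2})$; your sharper $|W_k|\lesssim\sqrt h$ is in fact correct (since $sK(s)$ is bounded under Assumption~\ref{Assumption:K} and $|f(x_k)-f(x_i)|\lesssim\|x_i-x_k\|_{\RR^p}$ on the compact $\MM$), and only makes the Gaussian-regime check easier.

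One point in your boundary paragraph is muddled. You say the asymmetry leaves the variance order unchanged yet is ``the origin of the slightly weaker rate $\sqrt{\log n}/(n^{1/2}h^{d/4-1/4})$.'' Those two assertions do not fit together: if the numerator variance stays $O(h^{d/2+1})$ and only the signal $A_h/p_h$ shifts from $O(h)$ to $O(\sqrt h)$, your own ratio identity gives the \emph{same} fluctuation $\sqrt{\log n}/(n^{1/2}h^{d/4-1/2})$ as in the interior, which is stronger than the lemma's boundary rate. The paper instead argues that near $\partial\MM$ the variance of $h^{-d/2}K_h(f-f(x_i))$ degrades to $O(h^{1/2-d/2})$, and this is what produces the $h^{d/4-1/4}$ exponent. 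Your variance observation is arguably the sharper one (the integrand $(f-f(x_i))^2$ still vanishes to second order regardless of boundary truncation), so your route actually proves a stronger bound than stated; but you should either adopt the paper's looser variance estimate or correct the sentence that attributes the weaker rate to the signal shift.
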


\begin{proof}
Fix $x_i$. Note that $\frac{ \sum_{j=1}^n K_{h}(x_i,x_j) (f(x_j)-f(x_i))}{ \sum_{j=1}^n K_{h}(x_i,x_j)}$ is actually the un-normalized GL. Denote $F_j:=h^{-d/2}K_{h}(x_i,x_j) (f(x_j)-f(x_i))$ and $G_j:=h^{-d/2}K_{h}(x_i,x_j)$, then we have
$$
\frac{ \sum_{j=1}^n K_{h}(x_i,x_j) (f(x_j)-f(x_i))}{ \sum_{j=1}^n K_{h}(x_i,x_j)}=\frac{\frac{1}{n}\sum_{j=1}^n F_j}{\frac{1}{n}\sum_{j=1}^n G_j}.
$$
Clearly, $F_j$ and $G_j$, when $j\neq i$, can be viewed as randomly sampled i.i.d. from two random variables $F$ and $G$ respectively.
Note that the un-normalized GL is a ratio of two dependent random variables, therefore the variance cannot be simply computed. We want to show that
$$
\frac{\frac{1}{n}\sum_{j=1}^n F_j}{\frac{1}{n}\sum_{j=1}^n G_j} \approx \frac{\mathbb{E}[F]}{\mathbb{E}[G]}
$$
and to control the size of the fluctuation as a function of $n$ and $h$. Note that we have
\[
\frac{1}{n}\sum_{j=1}^n F_j=\frac{n-1}{n}\left[\frac{1}{n-1}\sum_{j=1,j\neq i}^n F_j\right] 
\]
since $K_{h}(x_i,x_j) (f(x_j)-f(x_i))=0$. Also, since $\frac{n-1}{n}\to 1$ surely as $n\to\infty$, we can simply focus on analyzing $\frac{1}{n-1}\sum_{j=1,j\neq i}^n F_j$. A similar argument holds for $\frac{1}{n}\sum_{j=1}^n G_j$ -- clearly, $K_{h}(x_i,x_i)=K(0)>0$, so this term will contribute to the error term of order $\frac{1}{n}$. Thus, we have
\[
\frac{\frac{1}{n}\sum_{j=1}^n F_j}{\frac{1}{n}\sum_{j=1}^n G_j}=\frac{\frac{1}{n-1}\sum_{j=1,j\neq i}^n F_j}{\frac{1}{n-1}\sum_{j=1,j\neq i}^n G_j}+O\left(\frac{1}{n}\right).
\]
As we will see shortly, the $O(1/n)$ term will be dominated and can thus be ignored. 
 
First of all, we consider $x_i\notin\MM_{h^\gamma}$, By Theorem \ref{thm:pointwise_conv:approx_of_identity}, we have
\begin{align*}
\mathbb{E}[F]  =& \int_{\MM} h^{-d/2}K_{h}(x_i,y) (f(y)-f(x_i))\sfp(y)\,\ud V(y)= h \frac{\mu^{(0)}_{1,2}}{2}\Delta ((f(y)-f(x_i))\sfp(y))|_{y=x_i} + O\left(h^2\right) \\
\mathbb{E}[G] =& \int_{\MM} h^{-d/2}K_{h}(x_i-y) \sfp(y)\,\ud V(y)= \sfp(x_i) + O(h)
\end{align*}
and
\begin{align*}
\mathbb{E}[F^2]  =\,& \int_{\MM} h^{-d}K_{h}^2(x_i-y) (f(x_i)-f(y))^2 \sfp(y)\,\ud V(y) \\
=\,& \frac{1}{h^{d/2-1}} \frac{\mu^{(0)}_{2,2}}{2}\Delta ((f(x_i)-f(y))^2p(y))|_{y=x_i} + O\left(\frac{1}{h^{d/2-2}}\right) \\
\mathbb{E}[G^2]  =\,& \int_{\MM} h^{-d}K_{h}^2(x_i-y) \sfp(y)\,\ud V(y)= \frac{1}{h^{d/2}}\mu^{(0)}_{2,0}\sfp(x_i) + O\left(\frac{1}{h^{d/2-1}}\right).
\end{align*}
Thus, we conclude that \footnote{Note that since 
\begin{align*}
\mathbb{E}[F G]  =& \int_{\MM} K_{h}^2(x_i-y) (f(x_i)-f(y))\sfp(y)\,\ud V(y)= \frac{1}{h^{d/2-1}} \frac{\mu^{(0)}_{2,2}}{2}\Delta ((f(x_i)-f(y))p(y))|_{y=x_i} + O\left(\frac{1}{h^{d/2-2}}\right)\\ 
\text{Cov}(F, G )  =& \mathbb{E}[F G ] - \mathbb{E}[F]\mathbb{E}[G] =\frac{1}{h^{d/2-1}} \frac{\mu^{(0)}_{2,2}}{2}\Delta ((f(y)-f(x_i))\sfp(y))|_{y=x_i} + O\left(h,\frac{1}{h^{d/2-2}}\right) ,
\end{align*}
the correlation between $F$ and $G$ is
\begin{align*}
\rho (F ,G )=\frac{\text{Cov}(F ,G )}{\sqrt{\text{Var}(F)}\sqrt{\text{Var}(G)}}= O\left(\frac{\sqrt{h^{d/2 + d/2 - 1}}}{h^{d/2-1}} \right)=O(\sqrt{h}).
\end{align*}}
\begin{align*}
\text{Var}(F)  =\,& \frac{1}{h^{d/2-1}} \frac{\mu^{(0)}_{2,2}}{2}\Delta ((f(y)-f(x_i))^2\sfp(y))|_{y=x_i} + O\left(\frac{1}{h^{d/2-2}}\right) \\
\text{Var}(G)  =\,& \frac{1}{h^{d/2}}\mu^{(0)}_{2,0}\sfp(x_i) + O\left(1,\frac{1}{h^{d/2-1}}\right). \\
\end{align*} 

With the above bounds, we bounds on large deviation with high probability. First, note that the random variables $F$ are uniformly bounded by 
$$
c=O(h^{-d/2})
$$ 
and its variance is 
$$
\sigma^2 = O(h^{-(d/2-1)}).
$$ 
We see that 
$$
\sigma^2 \ll c,
$$ 
so Bernstein's inequality could in principle provide a large deviation bound that is tighter than that provided by Hoeffding's inequality. Recall Bernstein's inequality
$$
\Pr \left\{\frac{1}{n-1}\sum_{j=1,j\neq i}^n (F_j - \mathbb{E}[F]) > \alpha \right\} \leq e^{-\frac{n\alpha^2}{2\sigma^2 + \frac{2}{3}c\alpha}},
$$
where $\alpha>0$.
Since our goal is to estimate a quantity of size $O(h)$ (the prefactor of the Laplacian), we need to take $\alpha \ll h$. Let us take $\alpha=o(h)$. The exponent in Bernstein's inequality takes the form
$$
\frac{n\alpha^2}{2\sigma^2 + \frac{2}{3}c\alpha} = \frac{n \alpha^2}{O(h^{-(d/2-1)}) + o(h^{-d/2}h)} = O(n\alpha^2h^{d/2-1}),
$$
and by a simple union bound, we have
$$
\Pr \left\{\frac{1}{n-1}\sum_{j=1,j\neq i}^n (F_j - \mathbb{E}[F]) > \alpha;\,i=1,\ldots,n \right\} \leq ne^{-\frac{n\alpha^2}{2\sigma^2 + \frac{2}{3}c\alpha}}.
$$
Suppose $n,h$ further satisfy
$$
n\alpha^2h^{d/2-1} = O(\log (n));
$$
that is,
\begin{align}\label{proof:alphaChoice}
\alpha = O\left(\frac{\sqrt{\log (n)}}{n^{1/2}h^{d/4-1/2}}\right)\ll h.
\end{align}
It implies that for all $i=1,\ldots,n$, the deviation happens with probability less than $O(1/n^2)$ that goes to 0 as $n\to \infty$. It is clear that (\ref{proof:alphaChoice}) can be easily satisfied if we choose $h\gg \left(\frac{\log (n)}{n}\right)^{\frac{1}{\frac{d}{2}+1}}$.  
A simple bound by Hoeffding's inequality holds for the denominator when $\alpha=o(1)$ -- with probability higher than $1-O(1/n^2)$, for all $i=1,\ldots,n$, we have
\[
\left|\frac{1}{n-1}\sum_{j=1,j\neq i}^n (G_j - \mathbb{E}[G])\right|=O\left(\frac{\sqrt{\log (n)}}{n^{1/2}h^{d/4}}\right).
\]
Altogether, with probability higher than $1-O(1/n^2)$, for all $i=1,\ldots,n$, we have
\begin{align*}
\frac{\frac{1}{n-1}\sum_{j=1,j\neq i}^n F_j}{\frac{1}{n-1}\sum_{j=1,j\neq i}^n G_j}&=\,\frac{\mathbb{E}[F] + O\left(\frac{\sqrt{\log(n)}}{n^{1/2}h^{d/4-1/2}}\right) }{\mathbb{E}[G] + O\left(\frac{\sqrt{\log(n)}}{n^{1/2}h^{d/4}}\right)}=h\left[\frac{h^{-1}\mathbb{E}[F] + O\left(\frac{\sqrt{\log(n)}}{n^{1/2}h^{d/4+1/2}}\right) }{\mathbb{E}[G] + O\left(\frac{\sqrt{\log(n)}}{n^{1/2}h^{d/4}}\right)}\right]\\
&=\,h\left[\frac{h^{-1}\mathbb{E}[F]}{\mathbb{E}[G]} + O\left(\frac{\sqrt{\log(n)}}{n^{1/2}h^{d/4+1/2}}\right)\right],
\end{align*}
where the last equality holds since $h^{-1}\mathbb{E}[F]$ is of order $O(1)$.
Therefore, since $O\left(\frac{\sqrt{\log(n)}}{n^{1/2}h^{d/4+1/2}}\right)$ dominates $O\left(\frac{1}{nh}\right)$, we obtain the conclusion when $x_i\notin\MM_{h^\gamma}$.

For $x_i\in\MM_{h^\gamma}$, a similar argument holds. Indeed, the random variables $F$ are uniformly bounded by 
$$
c=O(h^{-d/2})
$$ 
and by (\ref{bdryrslt3}), its variance is 
$$
\sigma^2 = O(h^{-(d/2-1/2)}). 
$$ 
Indeed, when $x_i$ is near the boundary, the first order term cannot be canceled, so the variance is $O(h^{-(d/2-1/2)})$ instead of $O(h^{-(d/2-1)})$.  Thus, under the assumption that $\alpha=o(h^{1/2})$, the Berstein's inequality leads to the large deviation bound of the numerator. As a result, we obtain
\begin{align*}
\frac{\frac{1}{n-1}\sum_{j=1,j\neq i}^n F_j}{\frac{1}{n-1}\sum_{j=1,j\neq i}^n G_j}&=\,\frac{\mathbb{E}[F] + O\left(\frac{\sqrt{\log(n)}}{n^{1/2}h^{d/4-1/4}}\right) }{\mathbb{E}[G] + O\left(\frac{\sqrt{\log(n)}}{n^{1/2}h^{d/4}}\right)}=h^{1/2}\left[\frac{h^{-1/2}\mathbb{E}[F] + O\left(\frac{\sqrt{\log(n)}}{n^{1/2}h^{d/4+1/4}}\right) }{\mathbb{E}[G] + O\left(\frac{\sqrt{\log(n)}}{n^{1/2}h^{d/4}}\right)}\right]\\
&=\,h^{1/2}\left[\frac{h^{-1/2}\mathbb{E}[F]}{\mathbb{E}[G]} + O\left(\frac{\sqrt{\log(n)}}{n^{1/2}h^{d/4+1/4}}\right)\right],
\end{align*}
where the last term holds since $\frac{h^{-1/2}\mathbb{E}[F]}{\mathbb{E}[G]}$ is of order $O(1)$ when $x_i$ is near the boundary.
\end{proof}

\begin{proof}[Proof of Theorem \ref{thm:pointwise_conv_to_integral}]
Fix $i$ and $0<\alpha\leq 1$. By definition we have
\begin{align}
\,&(\vD_{h,\alpha,n}^{-1}\vP_{h,\alpha,n}\vX-\vX)[i] 
=\frac{\frac{1}{n}\sum_{j=1}^n \frac{K_h(x_i,x_j)}{\widehat{p}^\alpha_{h,n}(x_j)}(g_{ij}\vX[j]-\vX[i])}{\frac{1}{n}\sum_{l=1}^n  \frac{K_{h}(x_i,x_l)}{\widehat{p}^\alpha_{h,n}(x_l)}}\nonumber\\ 
=\,&\frac{\frac{1}{n}\sum_{j=1}^n \frac{K_h(x_i,x_j)}{p^\alpha_h(x_j)}(g_{ij}\vX[j]-\vX[i])}{\frac{1}{n}\sum_{l=1}^n  \frac{K_{h}(x_i,x_l)}{p^\alpha_h(x_l)}}\label{proof:finite:nonuniform1}\\
&\,+\frac{\frac{1}{n}\sum_{j=1}^n \Big(\frac{1}{\widehat{p}^\alpha_{h,n}(x_j)}-\frac{1}{p^\alpha_h(x_j)}\Big)K_h(x_i,x_j)(g_{ij}\vX[j]-\vX[i])}{\frac{1}{n}\sum_{l=1}^n  \frac{K_{h}(x_i,x_l)}{p^\alpha_h(x_l)}}\label{proof:finite:nonuniform2}\\
&\,+\frac{1}{n}\sum_{j=1}^n \frac{K_h(x_i,x_j)}{\widehat{p}^{\alpha}_{h,n}(x_j)}(g_{ij}\vX[j]-\vX[i])\left(\frac{1}{\frac{1}{n}\sum_{l=1}^n  \frac{K_{h}(x_i,x_l)}{\widehat{p}^\alpha_{h,n}(x_l)}}-\frac{1}{\frac{1}{n}\sum_{l=1}^n  \frac{K_{h}(x_i,x_l)}{p^\alpha_h(x_l)}}\right)\label{proof:finite:nonuniform3}.
\end{align}
Note that when $j=i$, $ \frac{K_h(x_i,x_j)}{p^\alpha_h(x_j)}(g_{ij}\vX[j]-\vX[i])= 0$, thus we have the following re-formulation
\[
\frac{1}{n}\sum_{j=1}^n \frac{K_h(x_i,x_j)}{p^\alpha_h(x_j)}(g_{ij}\vX[j]-\vX[i])=\frac{n-1}{n}\left(\frac{1}{n-1}\sum_{j=1,j\neq i}^n \frac{K_h(x_i,x_j)}{p^\alpha_h(x_j)}(g_{ij}\vX[j]-\vX[i])\right).
\]
Note that $\frac{n-1}{n}$ will converge to $1$. 
Thus, we can focus on analyzing the stochastic fluctuation of $\frac{1}{n-1}\sum_{j=1,j\neq i}^n \frac{K_h(x_i,x_j)}{p^\alpha_h(x_j)}(g_{ij}\vX[j]-\vX[i])$. The same comment applies to the other terms.
Clearly, $F_j:=\frac{K_h(x_i,x_j)}{p^\alpha_h(x_j)}(g_{ij}\vX[j]-\vX[i])$, $j\neq i$, are i.i.d. sampled from a $q$-dim random vector $F$, and $G_j:=\frac{K_h(x_i,x_j)}{p_h^\alpha(x_j)}$ are i.i.d. sampled from a random variable $G$. Thus, the analysis of the random vector $\frac{\frac{1}{n-1}\sum_{j=1,j\neq i}^n F_j}{\frac{1}{n-1}\sum_{j=1,j\neq i}^n G_j}$ can be viewed as an analysis of $q$ random variables. To apply Lemma \ref{lemma:LargeDeviation}, we have to clarify the regularity issue of $g_{ij}\vX[j]-\vX[i]$. Note that by definition, $g_{ij}\vX[j] := u_i^{-1}\myP_{j}^iX(x_j)$, thus we can view $g_{ij}\vX[j]$ as the value of the vector-valued function $u_i^{-1}\myP_{y}^{x_i}X(y)$ at $y=x_j$. Clearly, $u_i^{-1}\myP_{\cdot}^{x_i}X(\cdot)\in C^4(\MM\backslash C_{x_i})\cap L^\infty(\MM)$. Thus, Lemma \ref{lemma:LargeDeviation} can be applied. Indeed, we view $\frac{ g_{ij}\vX[j]-\vX[i] }{p^\alpha_h(x_j)}$ (resp. $\frac{1}{p^\alpha_h(x_j)}$) in the numerator as a discretization of the function $\frac{u_i^{-1}\myP_{y}^{x_i}X(y)-X(x_i)}{p^\alpha_h(y)}$ (resp. $\frac{1}{p^\alpha_h(y)}$).

As a result, for all $x_i\notin \MM_{h^\gamma}$, with probability higher than $1-O(1/n^2)$
\begin{align}\label{proof:T_haFinite}
 \frac{\frac{1}{n-1}\sum_{j=1,j\neq i}^n \frac{K_h(x_i,x_j)}{p^\alpha_h(x_j)}(g_{ij}\vX[j]-\vX[i])}{\frac{1}{n-1}\sum_{l=1,l\neq i}^n  \frac{K_{h}(x_i,x_l)}{p^\alpha_h(x_l)}}=u_i^{-1}\left( T_{h,\alpha}X-X \right)(x_i)+O\left(\frac{\sqrt{\log(n)}}{n^{1/2}h^{d/4-1/2}}\right).
\end{align}
Denote $\Omega_1$ to be the event space that (\ref{proof:T_haFinite}) holds.
Similarly, by Lemma \ref{lemma:LargeDeviation}, with probability higher than $1-O(1/n^2)$, 
$$
\left|\widehat{p}_{h,n}(x_j)- p_h(x_j)\right| =O\left(\frac{\sqrt{\log(n)}}{n^{1/2}h^{d/4}}\right) 
$$ 
for all $j=1,...,n$. Thus, by Assumption \ref{Assumption:K}, when $h$ is small enough, we have for all $x_i\in\mathcal{X}$
\begin{equation}\label{proof:pointwise:bound1}
\begin{split}
p_m/2\leq |p_{h}(x_i)|\leq p_M,\quad p_m/4\leq |\widehat{p}_{h,n}(x_i)|\leq 2p_M.
\end{split}
\end{equation}
Denote $\Omega_2$ to be the event space that (\ref{proof:pointwise:bound1}) holds. Thus, under  $\Omega_2$, by Taylor's expansion and (\ref{proof:pointwise:bound1}) we have
\[
|\widehat{p}_{h,n}^{-\alpha}(x_i)-p_h(x_i)^{-\alpha}|\leq  \frac{\alpha}{(p_m/4)^{1+\alpha}}|\widehat{p}_{h,n}(x_i)-p_h(x_i)|=O\left(\frac{\sqrt{\log(n)}}{n^{1/2}h^{d/4}}\right).
\]
With these bounds, under $\Omega_2$, (\ref{proof:finite:nonuniform2}) is simply bounded by $O\left(\frac{\sqrt{\log(n)}}{n^{1/2}h^{d/4}}\right)$, where the constant depends on $\|X\|_{L^\infty}$.
Similarly, under $\Omega_2$ we have the following bound for (\ref{proof:finite:nonuniform3}):
\begin{align*}
 \left|\frac{1}{\frac{1}{n}\sum_{l=1}^n  \frac{K_{h}(x_i,x_l)}{\widehat{p}^\alpha_{h,n}(x_l)}}-\frac{1}{\frac{1}{n}\sum_{l=1}^n  \frac{K_{h}(x_i,x_l)}{p^\alpha_h(x_l)}}\right|=\left|\frac{\frac{1}{n}\sum_{l=1}^n K_{h}(x_i,x_l)\left(\frac{1}{p^\alpha_h(x_l)}-\frac{1}{\widehat{p}^\alpha_{h,n}(x_l)}\right)}{\frac{1}{n}\sum_{l=1}^n  \frac{K_{h}(x_i,x_l)}{\widehat{p}^\alpha_{h,n}(x_l)}\frac{1}{n}\sum_{l=1}^n  \frac{K_{h}(x_i,x_l)}{p^\alpha_h(x_l)}} \right| =O\left(\frac{\sqrt{\log(n)}}{n^{1/2}h^{d/4}}\right).
\end{align*}
Hence, (\ref{proof:finite:nonuniform3}) is bounded by $O\left(\frac{\sqrt{\log(n)}}{n^{1/2}h^{d/4}}\right)$, where the constant depends on $\|X\|_{L^\infty}$ under $\Omega_2$. 

Putting the above together, under $\Omega_1\cap \Omega_2$, we have
$$
 (\vD_{h,\alpha,n}^{-1}\vP_{h,\alpha,n}\vX-\vX)[i]= u_i^{-1}(T_{h,\alpha}X-X)(x_i)+O\left(\frac{\sqrt{\log(n)}}{n^{1/2}h^{d/4}}\right),
$$
for all $i=1,\ldots,n$. Note that the measure of $\Omega_1\cap\Omega_2$ is greater than $1-O(1/n^2)$, so we finish the proof when $0<\alpha\leq 1$. 

When $\alpha=0$, clearly (\ref{proof:finite:nonuniform2}) and (\ref{proof:finite:nonuniform3}) disappear, and we only have (\ref{proof:finite:nonuniform1}). Since the convergence behavior of  (\ref{proof:finite:nonuniform1}) has been shown in (\ref{proof:T_haFinite}), we thus finish the proof when $\alpha=0$.  
A similar argument holds for $x_i\in\MM_{h^\gamma}$, and we skip the details.
\end{proof}

\section{Symmetric Isometric Embedding}\label{appendix_proof_symmetric_embedding}

Suppose we have a closed, connected and smooth $d$-dim Riemannian manifold $(\MM,g)$ with free isometric $\ZZ_2:=\{1,z\}$ action on it. Note that $\MM$ can be viewed as a principal bundle $P(\MM/\ZZ_2,\ZZ_2)$ with the group $\ZZ_2$ as the fiber. Without loss of generality, we assume the diameter of $\MM$ is less than $1$. The eigenfunctions $\{\phi_{j}\}_{j\geq0}$ of the Laplace-Beltrami operator $\Delta_\MM$ are known to form an orthonormal basis of $L^2(\MM)$, where $\Delta_\MM\phi_{j}=-\lambda_j\phi_{j}$ with $\lambda_j\geq0$. Denote $E_\lambda$ the eigenspace of $\Delta_\MM$ with eigenvalue $\lambda$. Since $\ZZ_2$ commutes with $\Delta_\MM$, $E_\lambda$ is a representation of $\ZZ_2$, where the action of $z$ on $\phi_j$ is defined by $z\circ \phi_j(x):=\phi_j(z\circ x)$.

We claim that all the eigenfunctions of $\Delta_\MM$ are either even or odd. Indeed, since $\ZZ_2$ is an abelian group and all the irreducible representations of $\ZZ_2$ are real, we know $z\circ \phi_{i}=\pm\phi_{i}$ for all $i\geq 0$. We can thus distinguish two different types of eigenfunctions:
\[
\phi^e_{i}(z\circ x)=\phi^e_{i}(x)
\quad\mbox{and}\quad
\phi^o_{i}(z\circ x)=-\phi^o_{i}(x),
\]
where the superscript $e$ (resp. o) means even (resp. odd) eigenfunctions.

It is well known that the heat kernel $k(x,y,t)$ of $\Delta_\MM$ is a smooth function over $x$ and $y$ and analytic over $t>0$, and can be written as
\[
k(x,y,t)=\sum_i e^{-\lambda_it}\phi_i(x)\phi_i(y),
\]
we know for all $t>0$ and $x\in\MM$, $\sum_j e^{-\lambda_jt}\phi_j(x)\phi_j(x)<\infty$. Thus we can define a family of maps by exceptionally taking odd eigenfunctions into consideration:
\[
\begin{array}{lccll}
\Psi^o_t:&\MM & \rightarrow &\ell^2 &\mbox{  for }t>0,\\
&x & \mapsto & \{e^{-\lambda_j t/2}\phi^o_j(x)\}_{j\geq 1} &
\end{array}
\]

\begin{lemma}\label{psio}
For $t>0$, the map $\Psi^o_t$ is an embedding of $\MM$ into $\ell^2$.
\end{lemma}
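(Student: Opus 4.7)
The plan is to verify that $\Psi^o_t$ is a smooth injective immersion; since $\MM$ is compact, this automatically upgrades to a smooth embedding. My strategy is to reduce each property to a statement about the $\ZZ_2$-odd part of the heat kernel and then apply the Minakshisundaram-Pleijel short-time asymptotics, in the spirit of the B\'erard-Besson-Gallot argument \cite{berard_besson_gallot:1994} restricted to the odd sector.

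First, $\Psi^o_t$ maps smoothly into $\ell^2$ because
\[
\sum_{j\geq 1} e^{-\lambda_j t}\phi^o_j(x)^2 \;\leq\; \sum_{j\geq 0} e^{-\lambda_j t}\phi_j(x)^2 \;=\; k(x,x,t) \;<\; \infty,
\]
uniformly in $x\in\MM$, and the same comparison applied to derivatives of $k$ controls the corresponding series for $d\Psi^o_t$. Because $e^{-\lambda_j t/2}>0$, the equivalences $\Psi^o_t(x)=\Psi^o_t(y) \iff \phi^o_j(x)=\phi^o_j(y)$ for all $j$, and $d\Psi^o_t|_x(v)=0 \iff d\phi^o_j|_x(v)=0$ for all $j$, hold for every fixed $t>0$, so it suffices to work with the full family $\{\phi^o_j\}$ and to probe it via the heat kernel at arbitrarily small auxiliary times $s>0$.

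The main tool is the odd heat kernel,
\[
k^o(x,y,s):=\sum_{j\geq 1}e^{-\lambda_j s}\phi^o_j(x)\phi^o_j(y)=\tfrac{1}{2}\bigl(k(x,y,s)-k(x,z\circ y,s)\bigr),
\]
where the second equality comes from splitting $k$ into its $\ZZ_2$-even and $\ZZ_2$-odd parts (using $\phi^e_j(z\circ y)=\phi^e_j(y)$ and $\phi^o_j(z\circ y)=-\phi^o_j(y)$). Since the action is free and $\MM$ is compact, $\delta:=\min_{x\in\MM}d(x,z\circ x)>0$, so $k(x,z\circ y,s)$ is exponentially negligible for $y$ close to $x$. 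The classical Gaussian short-time expansion of $k$ then yields
\[
k^o(x,x,s)\sim \tfrac{1}{2}(4\pi s)^{-d/2},\qquad k^o(x,y,s)\lesssim (4\pi s)^{-d/2}e^{-d(x,y)^2/(4s)}\quad (x\neq y),
\]
as $s\to 0^+$, uniformly in $x,y\in\MM$.

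For injectivity, if $\phi^o_j(x)=\phi^o_j(y)$ for all $j$, then $k^o(x,w,s)=k^o(y,w,s)$ for every $w$ and every $s>0$; taking $w=x$ and using symmetry gives $k^o(x,x,s)=k^o(x,y,s)$, which is incompatible with the asymptotics above as $s\to 0^+$ unless $x=y$. For the immersion, if $d\phi^o_j|_x(v)=0$ for every $j$ then termwise differentiation of the spectral series (justified by uniform convergence of heat-kernel derivatives for $s>0$) gives
\[
\partial^{(1)}_v\partial^{(2)}_v k^o(x,x,s)=\sum_{j\geq 1}e^{-\lambda_j s}\bigl(d\phi^o_j|_x(v)\bigr)^2=0 \quad\text{for every }s>0;
\]
but a direct computation in normal coordinates at $x$, using the splitting above together with $d(x,z\circ x)\geq\delta$, shows $\partial^{(1)}_v\partial^{(2)}_v k^o(x,x,s)\sim \tfrac{\|v\|^2}{2s}(4\pi s)^{-d/2}$ as $s\to 0^+$, forcing $v=0$. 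The principal technical point is precisely this short-time positivity of $\partial^{(1)}_v\partial^{(2)}_v k^o(x,x,s)$, which the decomposition $k^o=\tfrac{1}{2}(k-k\circ z)$ combined with the positive orbit distance $\delta$ delivers; once this is in hand, the standard spectral embedding argument for closed manifolds transfers verbatim to the odd sector.
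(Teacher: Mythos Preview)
Your proof is correct but follows a genuinely different route from the paper's. The paper argues by soft density: given distinct $x,y$, it builds an explicit odd bump function $f$ (equal to $1$ on a small neighborhood $N_x$ with $N_x\cap z\circ N_x=\emptyset$, and $-1$ on $z\circ N_x$) so that $f(x)\neq f(y)$; since $f$ lies in the closed span of the $\phi^o_j$, some $\phi^o_\alpha$ must separate $x$ from $y$. The immersion step is identical in spirit: if $d\phi^o_j|_x(v)=0$ for all $j$, then $df(X)=0$ for every odd $f$, in particular for every $f\in C^\infty_c(N_x)$, forcing $v=0$. Continuity is obtained by bounding $\|\Psi^o_t(x_n)-\Psi^o_t(x)\|^2_{\ell^2}$ by the full heat-kernel quantity $k(x_n,x_n,t)+k(x,x,t)-2k(x_n,x,t)$.

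Your approach instead isolates the odd heat kernel $k^o=\tfrac12(k-k\circ z)$ and reads off injectivity and immersion from its short-time diagonal asymptotics, using that the free action gives $\delta=\min_x d(x,z\circ x)>0$ so the $k\circ z$ correction and its derivatives are exponentially negligible. This is the B\'erard--Besson--Gallot mechanism transported to the odd sector, and it is more quantitative: you get the precise leading behavior of $k^o(x,x,s)$ and of $\partial_v^{(1)}\partial_v^{(2)}k^o(x,x,s)$, which could be reused if one wanted effective bounds. The cost is that you invoke the Minakshisundaram--Pleijel parametrix and off-diagonal derivative decay for the heat kernel, whereas the paper's argument needs only continuity of $k$ and the basic fact that odd eigenfunctions span the odd part of $L^2$. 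Both arguments ultimately exploit freeness of the $\ZZ_2$ action at the same place---yours via $\delta>0$, the paper's via the existence of $N_x$ disjoint from $z\circ N_x$.
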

\begin{proof}
If $x_n\rightarrow x$, we have by definition
\begin{align*}
\|\Psi^o_t(x_n)-\Psi^o_t(x)\|^2_{\ell^2}&=\sum_j\left|e^{-\lambda_jt/2}\phi^o_j(x_n)-e^{-\lambda_jt/2}\phi^o_j(x)\right|^2\\
&\leq\sum_j\left|e^{-\lambda_jt/2}\phi^o_j(x_n)-e^{-\lambda_jt/2}\phi^o_j(x)\right|^2+\sum_j\left|e^{-\lambda_jt/2}\phi^e_j(x_n)-e^{-\lambda_jt/2}\phi^e_j(x)\right|^2\\
&=k(x_n,x_n,t)+k(x,x,t)-2k(x_n,x,t),
\end{align*}
which goes to $0$ as $n\rightarrow \infty$ due to the smoothness of the heat kernel. Thus $\Psi^o_t$ is continuous.

Since the eigenfunctions $\{\phi_j\}_{j\geq0}$ of the Laplace-Beltrami operator form an orthonormal basis of $L^2(\MM)$, it follows that they separate points. We now show that odd eigenfunctions are enough to separate points. Given $x\neq y$ two distinct points on $\MM$, we can find a small enough neighborhood $N_x$ of $x$ that separates it from $y$. Take a characteristic odd function $f$ such that $f(x)=1$ on $N_x$, $f(z\circ x)=-1$ on $z\circ N_x$ and $0$ otherwise. Clearly we know $f(x)\neq f(y)$. Since $f$ is odd, it can be expanded by the odd eigenfunctions:
\[
f = \sum_j a_j \phi^o_j.
\]
Hence $f(x)\neq f(y)$ implies that there exists $\alpha$ such that $\phi^o_\alpha(x) \neq\phi^o_\alpha(y)$.

Suppose we have $\Psi^o_t(x)=\Psi^o_t(y)$, then $\phi^o_i(x)=\phi^o_i(y)$ for all $i$. By the above argument we conclude that $x=y$, that is, $\Psi^o_t$ is an 1-1 map. 
To show that $\Psi^o_t$ is an immersion, consider a neighborhood $N_x$ so that $N_x\cap z\circ N_x=\emptyset$. Suppose there exists $x\in\MM$ so that $\ud\Psi^o_t(X)=0$ for $X\in T_x\MM$, which implies $\ud \phi^o_i(X)=0$ for all $i$. Thus by the same argument as above we know $\ud f(X)=0$ for all $f\in C^\infty_c(N_x)$, which implies $X=0$.

In conclusion, $\Psi^o_t$ is continuous and 1-1 immersion from $\MM$, which is compact, onto $\Psi^o_t(\MM)$, so it is an embedding.
\end{proof}

Note that $\Psi^o_t(\MM)$ is symmetric with respect to $0$, that is, $\Psi^o_t(z\circ x)=-\Psi^o_t(x)$. However, it is not an isometric embedding and the embedded space is of infinite dimension. Now we construct an isometric symmetric embedding of $\MM$ to a finite dimensional space by extending the Nash embedding theorem \cite{Nash1954,Nash1956}.
We start from considering an open covering of $\MM$ in the following way. Since $\Psi_t^o$, $t>0$, is an embedding of $\MM$ into $\ell^2$, for each given $p\in\MM$, there exists $d$ odd eigenfunctions $\{\phi^o_{i^p_j}\}_{j=1}^{d}$ so that
\begin{equation}\label{definition:open:covering:map}
\begin{split}
&v_p:x\in\MM\mapsto (\phi^o_{i^p_1}(x),...,\phi^o_{i^p_d}(x))\in\RR^d\\
&v_{z\circ p}:z\circ x\in\MM\mapsto -(\phi^o_{i^p_1}(x),...,\phi^o_{i^p_d}(x))\in\RR^d
\end{split}
\end{equation}
are of full rank at $p$ and $z\circ p$. We choose a small enough neighborhood $N_p$ of $p$ so that $N_p\cap z\circ N_p=\emptyset$ and $v_p$ and $v_{z\circ p}$ are embedding of $N_p$ and $z\circ N_p$. It is clear that $\{N_p,\,\,z\circ N_p\}_{p\in\MM}$ is an open covering of $\MM$.

With the open covering $\{N_p,\,\,z\circ N_p\}_{p\in\MM}$, it is a well known fact \cite{sternberg:1999} that there exists an atlas of $\MM$
\begin{align}
\mathcal{A}=\{(V_j,h_j),\,\,(z\circ V_j,h^z_j)\}_{j=1}^L\label{definition:atlas:A}
\end{align}
where $V_j\subset M$, $z\circ V_j\subset M$, $h_j:\MM\to \RR^d$, $h_j^z:\MM\to \RR^d$, so that the following holds and the symmetry is taken into account:
\begin{enumerate}
\item[(a)] $\mathcal{A}$ is a locally finite refinement of $\{N_p,\,\,z\circ N_p\}_{p\in\MM}$, that is, for every $V_i$  (resp. $z\circ V_i$), there exists a $p_i\in\MM$ (resp. $z\circ p_i\in\MM$) so that $V_i\subset N_{p_i}$ (resp. $z\circ V_i\subset z\circ N_{p_i}$),
\item[(b)] $h_j(V_j)=B_2$, $h^z_j(z\circ V_j)=B_2$, and $h_j(x)=h^z_j(z\circ x)$ for all $x\in V_j$,
\item[(c)] for the $p_i$ chosen in (a), there exists $\phi^o_{i_{p_i}}$ so that
$\phi^o_{i_{p_i}}(x)\neq \phi^o_{i_{p_i}}(z\circ x)$ for all $x\in V_i$.
\item[(d)] $\MM=\cup_j \left(h_j^{-1}(B_1)\cup (h^z_j)^{-1}(B_1)\right)$. Denote $O_j=h_j^{-1}(B_1)$.
\end{enumerate}
where $B_r=\{x\in\RR^d:\|x\|<1\}$. We fix the point $p_i\in\MM$ when we determine $\mathcal{A}$, that is, if $V_i\in\mathcal{A}$, we have a unique $p_i\in\MM$ so that $V_i\subset N_{p_i}$.  Note that (c) holds since $\Psi_t^o$, $t>0$, is an embedding of $\MM$ into $\ell^2$ and the eigenfunctions of $\Delta_\MM$ are smooth.
We will fix a partition of unity $\{\eta_i\in C_c^\infty(V_i),\,\,\eta^z_i\in C_c^\infty(z\circ V_i)\}$ subordinate to $\{V_j,\,\,z\circ V_j\}_{j=1}^L$. Due to symmetry, we have $\eta_i(x)=\eta^z_i(z\circ x)$ for all $x\in V_i$. To ease notation, we define
\begin{equation}\label{definition:partition_of_unity_over_A}
\begin{split}
\psi_i(x)=\left\{
\begin{array}{ll}
\eta_i(x) & \mbox{ when }x\in V_i\\
\eta^z_i(x) & \mbox{ when }x\in z\circ V_i
\end{array}\right.
\end{split}
\end{equation}
so that $\{\psi_i\}_{i=1}^L$ is a partition of unit subordinate to $\{V_i\cup z\circ V_i\}_{i=1}^L$.

\begin{lemma}\label{Z}
There exists a symmetric embedding $\tilde{u}:\MM^d\hookrightarrow \RR^{N}$ for some $N\in\NN$.
\end{lemma}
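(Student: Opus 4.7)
The plan is to adapt the classical partition-of-unity construction of a Whitney-type embedding to the $\ZZ_2$-equivariant setting, using the atlas $\mathcal{A}$ in~(\ref{definition:atlas:A}) and the partition of unity $\{\psi_i\}$ from~(\ref{definition:partition_of_unity_over_A}). The essential new ingredient is to sign-flip each local chart on the $z$-image of its domain, so that every coordinate of the resulting map becomes antisymmetric under the $\ZZ_2$ action, that is, $\tilde u(z\circ x)=-\tilde u(x)$, while still serving as a faithful coordinate bump near each point.

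Concretely, for each $i=1,\ldots,L$, define smooth maps $\hat{h}_i:\MM\to\RR^{d}$ and $\hat{\psi}_i:\MM\to\RR$ by
\[
\hat{h}_i(x):=\begin{cases}\psi_i(x)h_i(x) & x\in V_i,\\ -\psi_i(x)h^z_i(x) & x\in z\circ V_i,\\ 0 & \text{otherwise},\end{cases}\qquad \hat{\psi}_i(x):=\begin{cases}\psi_i(x) & x\in V_i,\\ -\psi_i(x) & x\in z\circ V_i,\\ 0 & \text{otherwise},\end{cases}
\]
and set $N:=L(d+1)$ and $\tilde u(x):=\bigl(\hat{h}_1(x),\hat{\psi}_1(x),\ldots,\hat{h}_L(x),\hat{\psi}_L(x)\bigr)\in\RR^{N}$. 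These extensions are $C^\infty$ because $V_i$ and $z\circ V_i$ are disjoint open sets (since $V_i\subset N_{p_i}$ and $N_{p_i}\cap z\circ N_{p_i}=\emptyset$) and $\psi_i$ is compactly supported in $V_i\cup z\circ V_i$. Using $h^z_i(z\circ x)=h_i(x)$ from (b) and $\psi_i(z\circ x)=\psi_i(x)$ (which follows from $\eta_i(x)=\eta^z_i(z\circ x)$), direct substitution yields $\hat{h}_i(z\circ x)=-\hat{h}_i(x)$ and $\hat{\psi}_i(z\circ x)=-\hat{\psi}_i(x)$, hence $\tilde u(z\circ x)=-\tilde u(x)$, which is the desired symmetry.

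Verification that $\tilde u$ is an embedding will mirror the standard partition-of-unity argument. For injectivity, given $\tilde u(x)=\tilde u(y)$, choose $j$ with $\psi_j(x)>0$ (possible since $\sum_i\psi_i\equiv 1$); then $\hat\psi_j(x)\neq 0$ forces $y$ to lie in the same piece ($V_j$ or $z\circ V_j$) as $x$, because $\hat\psi_j$ has opposite signs on the two, and subsequently the equality of $\hat h_j$ together with injectivity of $h_j$ (resp.\ $h^z_j$) yields $x=y$. For the immersion property, at $x$ with $\psi_j(x)>0$ the block $(\hat h_j,\hat\psi_j)$ agrees locally with $\pm(\psi_j h_j,\psi_j)$, whose differential is injective by the usual computation since $h_j$ is a coordinate chart. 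Compactness of $\MM$ then upgrades the continuous injective immersion to a topological embedding.

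The main (mild) obstacle is the smooth gluing of the sign-flipped pieces across $\partial(V_i\cup z\circ V_i)$; this is resolved by the compact support of $\psi_i$ inside the disjoint union $V_i\sqcup z\circ V_i$, so that extension by zero is automatically $C^\infty$ and no further boundary compatibility is needed. Note that the odd eigenfunctions $\phi^o_{i_{p_i}}$ singled out by property (c) and the Riemannian metric play no role here; they become essential only for the subsequent upgrade of $\tilde u$ to an \emph{isometric} symmetric embedding à la Nash.
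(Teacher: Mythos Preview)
Your proof is correct and takes a genuinely different route from the paper's.

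The paper builds the symmetric embedding out of \emph{odd eigenfunctions} of $\Delta_\MM$: for each patch $V_i$ it forms $u_i(x)=R_i\bigl(\phi^o_{i_{p_i}}(x),v_{p_i}(x)\bigr)\in\RR^{d+1}$, which is automatically antisymmetric because odd eigenfunctions satisfy $\phi^o(z\circ x)=-\phi^o(x)$; the concatenation $\bar u=(u_1,\ldots,u_L)$ is then only a symmetric \emph{immersion}, and the paper makes it injective by a Sard/Whitney perturbation $\tilde u_k=\bar u+\sum b_is_i\theta_i$ with odd sign functions $s_i$. By contrast, you bypass both the spectral input and the perturbation step: you take the ordinary Whitney partition-of-unity embedding $(\psi_i h_i,\psi_i)_i$ and simply sign-flip each block on $z\circ V_i$. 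Injectivity and the immersion property then follow from the standard chart argument, and antisymmetry is immediate from $h^z_i(z\circ x)=h_i(x)$ and $\psi_i(z\circ x)=\psi_i(x)$. Your construction lands in the same dimension $N=L(d+1)$, and it also satisfies $\tilde u(x)\neq 0$ (needed just after the lemma for $u=\Phi\circ\tilde u$), since $\sum_i\psi_i\equiv 1$ forces some $\hat\psi_j(x)\neq 0$.

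One small remark on your closing comment: in the paper the odd eigenfunctions and property~(c) are used \emph{only} in the proof of this lemma, not in the subsequent Nash-type isometric upgrade (Lemma~\ref{Y} and Theorem~\ref{mainthm} work entirely with the atlas, partition of unity, and the map $\Phi$). So rather than being ``essential later,'' they are precisely the ingredient your alternative argument shows to be dispensable here.
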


\begin{proof}
Fix $V_i$ and hence $p_i\in\MM$.
Define
\[
u_i:x\in\MM\mapsto (\phi^o_{i_{p_i}}(x),v_{p_i}(x))\in \RR^{d+1},
\]
where $v_{p_i}$ is defined in (\ref{definition:open:covering:map}). Note that $u_i$ is of full rank at $p_i$.  Due to symmetry, the assumption (c) and the fact that $V_i\cap z\circ V_i=\emptyset$, we can find a rotation $R_i\in SO(d+1)$ and modify the definition of $u_i$: 
\[
u_i:
x \mapsto  R_i(\phi^o_{i_{p_i}}(x),v_{p_i}(x)) ,
\]
which is an embedding of $V_i\cup z\circ V_i$ onto $\RR^{d+1}$ so that $u_i(V_i\cup z\circ V_i)$ does not meet all the axes of $\RR^{d+1}$. Note that since $v_{z\circ p}(z\circ x)=- v_\sfp(x)$ and $\phi^o_{i_{p_i}}(z\circ x)=-\phi^o_{i_{p_i}}(x)$, we have $u_i(z\circ x)=-u_i(x)$.
Define
\[
\bar{u}:x\mapsto (u_{1}(x),...,u_{L}(x)).
\]
Since locally $\ud \bar{u}$ is of full rank and
\[
\bar{u}(z\circ x)=(u_{1}(z\circ x),...,u_{L}(z\circ x))=-(u_{1}(x),...,u_{L}(x))=-\bar{u}(x),
\]
$\bar{u}$ is clearly an symmetric immersion from $\MM$ to $\RR^{L(d+1)}$. Denote
\[
\epsilon=\min_{i=1,...,L}\min_{x\in V_i\cup z\circ V_i}\min_{k=1,...,d+1}\langle u_{i}(x),e_k\rangle,
\]
where $\{e_k\}_{k=1,...,d+1}$ is the canonical basis of $\RR^{d+1}$. By the construction of $u_i$, $\epsilon>0$.

By the construction of the covering $\{O_i\cup g\circ O_i\}_{i=1}^L$, we know $L\geq 2$. We claim that by properly perturbing $\bar{u}$ we can generate a symmetric 1-1 immersion from $\MM$ to $\RR^{L(d+1)}$.

Suppose $\bar{u}$ is 1-1 in $W\subset \MM$, which is invariant under $\ZZ_2$ action by the construction of $\bar{u}$. Consider a symmetric closed subset $K\subset W$. Let $O^1_i=W\cap (O_i\cup g\circ O_i)$ and $O^2_i=(\MM\backslash K)\cap (O_i\cup g\circ O_i)$. Clearly $\{O^1_i,O^2_i\}_{i=1}^L$ is an covering of $\MM$. Consider a partition of unity $\mathcal{P}=\{\theta_\alpha\}$ subordinate to this covering so that $\theta_\alpha(z\circ x)=\theta_\alpha(x)$ for all $\alpha$. Index $\mathcal{P}$ by integer numbers so that for all $i>0$, we have $\mbox{supp}\theta_i\subset O^2_i$.

We will inductively define a sequence $\tilde{u}_k$ of immersions by properly choosing constants $b_i\in\RR^{L(d+1)}$:
\[
\tilde{u}_k=\bar{u}+\sum^k_{i=1}b_is_i\theta_i,
\]
where $s_i\in C_c^\infty(\MM)$ so that $\mbox{supp}(s_i)\subset N_i\cup z\circ N_i$ and
\[
s_i(x)=\left\{
\begin{array}{rl}
1&\mbox{ when }x\in V_i\\
-1&\mbox{ when }x\in z\circ V_i
\end{array}
\right..
\]
Note that $u_k$ by definition will be symmetric. Suppose $u_{k}$ is properly defined to become an immersion and $\|\tilde{u}_{j}-\tilde{u}_{j-1}\|_{C^\infty}<2^{-j-2}\epsilon$ for all $j\leq k$.

Denote
\[
D_{k+1}=\{(x,y)\in\MM\times\MM: s_{k+1}(x)\theta_{k+1}(x)\neq s_{k+1}(y)\theta_{k+1}(y)\},
\]
which is of dimension $2d$. Define $G_{k+1}:D_{k+1}\rightarrow \RR^{L(d+1)}$ as
\[
G_{k+1}(x,y)=\frac{\tilde{u}_k(x)-\tilde{u}_k(y)}{s_{k+1}(x)\theta_{k+1}(x)-s_{k+1}(y)\theta_{k+1}(y)}.
\]
Since $G_{k+1}$ is differentiable and $L\geq 2$, by Sard's Theorem $G_{k+1}(D_{k+1})$ is of measure zero. By choosing $b_{k+1}\notin G_{k+1}(D_{k+1})$ small enough, $\tilde{u}_{k+1}$ can be made an immersion and $\|\tilde{u}_{k+1}-\tilde{u}_{k}\|<2^{-k-3}\epsilon$. In this case $\tilde{u}_{k+1}(y_1)=\tilde{u}_{k+1}(y_2)$ implies
\[
b_{k+1}(s_{k+1}(x)\theta_{k+1}(x)-s_{k+1}(y)\theta_{k+1}(y))=\tilde{u}_k(x)-\tilde{u}_k(y).
\]
Since $b_{k+1}\notin G_{k+1}(D_{k+1})$, this can happen only if $s_{k+1}(x)\theta_{k+1}(x)=s_{k+1}\theta_{k+1}(y)$ and $\tilde{u}_k(x)=\tilde{u}_k(y)$.

Define
\[
\tilde{u}=\tilde{u}_L.
\]
By definition $\tilde{u}$ is a symmetric immersion and differs from $\bar{u}$ by $\epsilon/2$ in $C^\infty$.

Now we claim that $\tilde{u}$ is 1-1. Suppose $\tilde{u}(x)=\tilde{u}(y)$. Note that by the construction of $b_j$ this implies $s_{L}(x)\theta_L(x)=s_{L}(y)\theta_L(y)$ and $u_{L-1}(x)=u_{L-1}(y)$. Inductively we have $\bar{u}(x)=\bar{u}(y)$ and $s_{j}(x)\theta_j(x)=s_{j}(y)\theta_j(y)$ for all $j>0$. Suppose $x\in W$ but $y\notin W$, then $s_{j}(y)\theta_j(y)=s_{j}(x)\theta_j(x)=0$ for all $j>0$, which is impossible. Suppose both $x$ and $y$ are outside $W$, then there are two cases to discuss. First, if $x$ and $y$ are both inside $V_i$ for some $i$, then $s_{j}(x)\theta_j(x)=s_{j}(y)\theta_j(y)$ for all $j>0$ and $\bar{u}(x)=\bar{u}(y)$ imply $x=y$ since $\bar{u}$ embeds $V_i$. Second, if $x\in V_i\backslash V_j$ and $y\in V_j\backslash V_i$ where $i\neq j$, then $s_{j}(x)\theta_j(x)=s_{j}(y)\theta_j(y)$ for all $j>0$ is impossible. In conclusion, $\tilde{u}$ is 1-1.

Since $\MM$ is compact and $\tilde{u}$ is continuous, we conclude that $\tilde{u}$ is a symmetric embedding of $\MM$ into $\RR^{L(d+1)}$.

\end{proof}

The above Lemma shows that we can always find a symmetric embedding of $\MM$ into $\RR^{L(d+1)}$ for some $L>0$. The next Lemma helps us to show that we can further find a symmetric embedding of $\MM$ into $\RR^{p}$ for some $p>0$ which is isometric. We define $s_p:=\frac{p(p+1)}{2}$ in the following discussion.

\begin{lemma}\label{PHI}
There exists a symmetric smooth map $\Phi$ from $\RR^p$ to $\RR^{s_p+p}$ so that $\partial_i\Phi(x)$ and $\partial_{ij}\Phi(x)$, $i,j=1,...p$, are linearly independent as vectors in $\RR^{s_p+p}$ for all $x\neq 0$.
\end{lemma}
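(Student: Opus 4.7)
The natural strategy is to write $\Phi$ as an explicit odd polynomial map and check non\-degeneracy of the derivative matrix by reducing it to a block problem. I would use the ansatz
\[
\Phi(x)=\big(x_1,\ldots,x_p,\;\psi_1(x),\ldots,\psi_{s_p}(x)\big),
\]
where each $\psi_\alpha$ is a smooth odd scalar function on $\RR^p$. Oddness of $x_i$ and of $\psi_\alpha$ gives $\Phi(-x)=-\Phi(x)$ for free, so the symmetry requirement is satisfied automatically. With this ansatz the matrix whose rows are $\partial_i\Phi(x)$ and $\partial_{ij}\Phi(x)$ is block upper-triangular,
\[
M(x)=\begin{pmatrix} I_p & [\partial_i\psi_\alpha(x)] \\ 0 & [\partial_{ij}\psi_\alpha(x)] \end{pmatrix},
\]
so $\det M(x)=\det H(x)$ where $H(x)=[\partial_{ij}\psi_\alpha(x)]$ is an $s_p\times s_p$ matrix whose rows are indexed by pairs $i\le j$. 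The lemma therefore reduces to exhibiting $s_p$ odd smooth scalars whose Hessians at every $x\neq0$ are linearly independent in $\mathrm{Sym}^2(\RR^p)^{*}\cong\RR^{s_p}$.

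For the $\psi_\alpha$ the first candidate to try is a Veronese-type basis twisted by an odd factor, e.g.\
\[
\psi_{ij}(x)=x_ix_j\,\ell(x)+c_{ij}(x),
\]
with $\ell$ a linear form and $c_{ij}(x)$ a generic odd cubic correction from a finite-dimensional family. The leading term gives
\[
\partial_{kl}\psi_{ij}(x)=\ell(x)(\delta_{ik}\delta_{jl}+\delta_{il}\delta_{jk})+(\text{linear in }x),
\]
which on $\{\ell\neq0\}$ is a nondegenerate perturbation of the standard basis of symmetric tensors, so $\det H(x)$ is automatically nonzero on that open half-space. A Sard/transversality argument on the coefficients of the cubic corrections $c_{ij}$ then shows that for a generic such choice the determinant $\det H(x)$ does not vanish on the hyperplane $\{\ell=0\}\setminus\{0\}$ either, handling the whole of $\RR^p\setminus\{0\}$.

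The delicate point, and what I expect to be the main obstacle, is a parity constraint: since each $\psi_\alpha$ is odd, the entries of $H$ are odd functions of $x$, hence $\det H(-x)=(-1)^{s_p}\det H(x)$. When $s_p=p(p+1)/2$ is odd (i.e.\ $p\equiv 1,2\pmod 4$) and $p\ge 2$, connectedness of $\RR^p\setminus\{0\}$ together with the intermediate value theorem force $\det H$ to vanish somewhere on $\RR^p\setminus\{0\}$. In those cases the block-triangular ansatz above cannot work, and one must abandon the choice of $x_i$ for the first $p$ coordinates and instead let those components also contribute nontrivially to the Hessian block, choosing them so that the overall determinant becomes an \emph{even} polynomial and hence can be made nonvanishing on $\RR^p\setminus\{0\}$.

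Once the correct parity is arranged by an appropriate modification of the ansatz, the remainder of the proof is a standard polynomial/transversality computation: one writes the final determinant as a polynomial in the coefficients of the chosen $\psi_\alpha$, observes that it is not identically zero on $\RR^p\setminus\{0\}$ for some choice (e.g.\ using generic cubic corrections), and invokes Sard's theorem to conclude that for almost every parameter choice the determinant is strictly positive (or strictly negative) on all of $\RR^p\setminus\{0\}$. Smoothness of $\Phi$ is automatic from the polynomial construction.
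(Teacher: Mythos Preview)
Your proposal has two genuine gaps.

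First, the Sard/transversality step does not do what you claim. Knowing that the determinant is not identically zero for some parameter value only ensures that, for generic parameters, its zero locus has positive codimension; it cannot force that locus to be empty. A nonconstant real polynomial in $p\ge 2$ variables always has zeros away from the origin, and a generic perturbation of it still does. Sard's theorem concerns critical \emph{values}, not the vanishing set of a function, so neither invocation of it (on the hyperplane $\{\ell=0\}$, and in your final paragraph) yields a determinant that is nowhere zero on $\RR^p\setminus\{0\}$.

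Second, your proposed repair of the parity obstruction cannot succeed. Your observation is in fact stronger than you use it: if $\Phi$ is odd then every $\partial_i\Phi^\alpha$ is even and every $\partial_{ij}\Phi^\alpha$ is odd, so the full $(p+s_p)\times(p+s_p)$ matrix $M(x)$ always satisfies $\det M(-x)=(-1)^{s_p}\det M(x)$, \emph{regardless of what the first $p$ components are}. Replacing the linear coordinates $x_i$ by other odd functions does not change this parity. Hence for $p\ge 2$ with $s_p$ odd (equivalently $p\equiv 1,2\pmod 4$) the determinant must vanish somewhere on the connected set $\RR^p\setminus\{0\}$, and no odd $\Phi$ into $\RR^{s_p+p}$ can satisfy the conclusion of the lemma for such $p$.

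By contrast, the paper does not attempt any genericity argument: it simply writes down an explicit non-polynomial map with first $p$ coordinates $x_1,\ldots,x_p$ and remaining coordinates of the form $x_k\cosh(x_\ell)$, and asserts linear independence from a short derivative computation. That route is far more direct than yours---but your parity observation applies to it verbatim, since every $x_k\cosh(x_\ell)$ is odd and the same block-triangular structure holds. So while your argument is incomplete, the obstruction you uncovered is genuine and also affects the paper's construction; for instance, at $p=2$ one checks directly that the paper's $\det M$ vanishes on both coordinate axes.
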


\begin{proof}
Denote $x=(x_1,...x_{p})\in\RR^p$. We define the map $\Phi$ from $\RR^{p}$ to $\RR^{s_{p}+p}$ by
\[
\Phi:x\mapsto \left(x_1\,\,,\,\,...,x_p\,\,,\,\,x_1\frac{e^{x_1}+e^{-x_1}}{2}\,\,,\,\,x_1\frac{e^{x_2}+e^{-x_2}}{2}\,\,,...\,\,,\,\,x_{p}\frac{e^{x_p}+e^{-x_p}}{2}\right).
\]
where $i,j=1,...,p$ and $i\neq j$. It is clear that $\Phi$ is a symmetric smooth map, that is, $\Phi(-x)=-\Phi(x)$. Note that
\[
\partial_{ij}\left(x_k\frac{e^{x_\ell}+e^{-x_\ell}}{2}\right)=\delta_{jk}\frac{e^{x_i}-e^{-x_i}}{2}+\delta_{ik}\frac{e^{x_j}-e^{-x_j}}{2}+x_k\delta_{j\ell}\frac{e^{x_i}+e^{-x_i}}{2}
\]
Thus when $x\neq 0$, for all $i=1,...,p$, $\partial_i\Phi(x)$ and $\partial_{ij}\Phi(x)$, $i,j=1,...p$, are linearly independent as vectors in $\RR^{s_p+p}$.
\end{proof}

Combining Lemma \ref{Z} and \ref{PHI}, we know there exists a symmetric embedding $u:\MM^d\hookrightarrow \RR^{s_{L(d+1)}+L(d+1)}$ so that $\partial_iu(x)$ and $\partial_{ij}u(x)$, $i,j=1,...,d$, are linearly independent as vectors in $\RR^{s_{L(d+1)}+L(d+1)}$ for all $x\in\MM$. Indeed, we define
\[
u=\Phi\circ \tilde{u}.
\]
Clearly $u$ is a symmetric embedding of $\MM$ into $\RR^{s_{L(d+1)}+L(d+1)}$. Note that $\tilde{u}(x)\neq 0$ otherwise $\tilde{u}$ is not an embedding. Moreover, by the construction of $\tilde{u}$, we know $u_i(V_i\cup z\circ V_i)$ is away from the axes of $\RR^{L(d+1)}$ by $\epsilon/2$, so the result.

Next we control the metric on $u(\MM)$ induced by the embedding. By properly scaling $u$, we have $g-\ud u^2>0$. We will assume properly scaled $u$ in the following.

\begin{lemma}\label{Y}
Given the atlas $\mathcal{A}$ defined in (\ref{definition:atlas:A}), there exists $\xi_i\in C^\infty(V_i,\,\,\RR^{s_d+d})$ and $\xi^z_i\in C^\infty(z\circ V_i,\,\,\RR^{s_d+d})$ so that $\xi^z_i-\xi_i>cI_{s_d+d}$ for some $c>0$ and
\[
g-\ud u^2=\sum_{j=1}^m \eta_j^2\ud \xi_j^2+\sum_{j=1}^m (\eta^z_j)^2(\ud \xi^z_j)^2.
\]
\end{lemma}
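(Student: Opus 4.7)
The strategy follows the classical sum-of-squares decomposition used in Nash's isometric embedding theorem, adapted equivariantly to the $\ZZ_2$-action. Since $u$ has already been scaled so that $g-\ud u^2$ is a smooth, pointwise positive-definite symmetric $2$-tensor on $\MM$, the task is to express it as the prescribed weighted sum of exact squares of pullbacks, while respecting the symmetry of the atlas $\mathcal{A}$.

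First I would work locally on each chart $V_j$. Pulling back via $h_j$ to $B_2\subset \RR^d$, the tensor $(g-\ud u^2)|_{V_j}$ becomes smooth and positive definite, and the standard local decomposition lemma that underlies Nash's proof produces smooth functions $\xi^0_{j,1},\ldots,\xi^0_{j,s_d}$ on a neighborhood of $\overline{h_j^{-1}(B_1)}$ so that $(g-\ud u^2)|_{V_j}=\sum_{k=1}^{s_d}(\ud \xi^0_{j,k})^2$ on $\supp \eta_j$. I would then pad this map with $d$ additional smooth coordinates, constant near $\supp \eta_j$ and thus irrelevant to the decomposition, to obtain $\xi_j\in C^\infty(V_j,\RR^{s_d+d})$ that still satisfies $(g-\ud u^2)|_{V_j}=(\ud \xi_j)^2$ on $\supp \eta_j$.

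Next I would define the twin map equivariantly. Because $z$ is an isometry of $g$, and $u\circ z=-u$ forces $z^*\ud u^2=\ud u^2$, the tensor $(g-\ud u^2)|_{z\circ V_j}$ is exactly the $z$-pullback of $(g-\ud u^2)|_{V_j}$. Setting $\xi_j^z(y):=\xi_j(z\circ y)+w_j$ on $z\circ V_j$ with a translation $w_j\in \RR^{s_d+d}$ concentrated in the $d$ padding slots and with every relevant component larger than a fixed $c>0$ then yields $(\ud \xi_j^z)^2=(g-\ud u^2)|_{z\circ V_j}$ together with the separation $\xi_j^z-\xi_j>cI_{s_d+d}$, read componentwise after identifying the two $\RR^{s_d+d}$-valued functions on $V_j$ through the $z$ action.

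The global identity would then follow by assembly. Replacing $\eta_j$ and $\eta_j^z$ by their normalizations $\eta_j/\sqrt{\sum_k(\eta_k^2+(\eta^z_k)^2)}$ preserves smoothness and $\ZZ_2$-symmetry and yields $\sum_j(\eta_j^2+(\eta_j^z)^2)\equiv 1$; weighting the local decompositions by $\eta_j^2$ and $(\eta_j^z)^2$ respectively and summing produces the claimed identity for $g-\ud u^2$. The main obstacle is reconciling the separation condition $\xi_j^z-\xi_j>cI_{s_d+d}$ with the exact local decomposition: the $d$ padding coordinates in the target $\RR^{s_d+d}$ supply exactly the slack required, since the first $s_d$ coordinates are rigidly pinned down by the Nash decomposition while the remaining $d$ coordinates are free to carry the translation vectors $w_j$.
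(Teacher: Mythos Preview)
Your overall architecture---local isometric realization on each $V_j$, equivariant definition of $\xi_j^z$ via the $z$-pullback plus a constant shift, and assembly through a squared partition of unity---matches the paper's. Two specific steps, however, do not go through as written.

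First, the decomposition $(g-\ud u^2)|_{V_j}=\sum_{k=1}^{s_d}(\ud \xi^0_{j,k})^2$ with exactly $s_d=d(d+1)/2$ functions is nothing other than the assertion that $(V_j,\,g-\ud u^2)$ admits a local isometric immersion into $\RR^{s_d}$. That is the Janet--Cartan theorem, valid only for real-analytic data; it is \emph{not} the lemma that underlies Nash's smooth argument and is not known for smooth metrics in general. The paper instead invokes the smooth local isometric embedding theorem directly into $\RR^{s_d+d}$, so that all $s_d+d$ target coordinates are already used by the embedding and there are no padding slots.

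Second, even granting your $s_d$-decomposition, concentrating the shift $w_j$ in the $d$ padding slots makes $\xi_j^z-\xi_j$ equal to zero in the first $s_d$ components, which violates $\xi_j^z-\xi_j>cI_{s_d+d}$ componentwise; and the downstream application of the lemma (flipping the sign of each $\cos(\lambda\gamma_i)$ and $\sin(\lambda\gamma_i)$ under $z$) actually needs the shift to equal a common constant in \emph{every} coordinate. The repair is simpler than your padding device: adding any constant to any component of $\xi_j$ leaves $\ud\xi_j$ unchanged, so the first $s_d$ coordinates are not ``rigidly pinned down'' at all. The paper exploits exactly this affine freedom, setting $\xi_i^z(z\circ x)=\xi_i(x)+cI_{s_d+d}$ with a single uniform $c$. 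Your explicit renormalization of the partition of unity so that the squares sum to one is, on the other hand, a detail the paper's proof leaves implicit.
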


\begin{proof}
Fix $V_i$. By applying the local isometric embedding theorem \cite{sternberg:1999} we have smooth maps $x_i:h_i(V_i)\hookrightarrow \RR^{s_d+d}$ and $x^z_i:h^z_i(z\circ V_i)\hookrightarrow \RR^{s_d+d}$ so that
\[
(h_i^{-1})^*g=\ud x_i^2
\quad\mbox{and}\quad
((h^z_i)^{-1})^*g=(\ud x^z_i)^2,
\]
where $\ud x_i^2$ (resp. $(\ud x^z_i)^2$) means the induced metric on $h_i(V_i)$ (resp. $h^z_i(z\circ V_i$)) from $\RR^{s_d+d}$. Note that the above relationship is invariant under affine transformation of $x_i$ and $x_i^z$.
By assumption (b) of $\mathcal{A}$ we have $h_i(x)=h^z_i(z\circ x)$ for all $x\in V_i$, so we modify $x_i$ and $x_i^z$ so that
\[
x^z_i=x_i+c_iI_{s_d+d},
\]
where $c_i>0$, $I_{s_d+d}=(1,...,1)^T\in\RR^{s_d+d}$, and $x_i(B_1)\cap x_i^z(B_1)=\emptyset$. Denote $c=\mbox{max}_{i=1}^L\{c_i\}$ and further set
\[
x^z_i=x_i+cI_{s_d+d}
\]
for all $i$. By choosing $x_i$ and $x^z_i$ in this way, we have embedded $V_i$ and $z\circ V_i$ simultaneously into the same Euclidean space. Note that
\[
g=h_i^*(h_i^{-1})^*g=\ud(x_i\circ h_i)^2
\]
on $V_i$ and
\[
g=(h^z_i)^*((h^z_i)^{-1})^*g=\ud(x^z_i\circ h^z_i)^2
\]
on $z\circ V_i$. Thus, by defining $\xi_i=x_i\circ h_i$ and $\xi^z_i=x^z_i\circ h^z_i$, and applying the partition of unity with (\ref{definition:partition_of_unity_over_A}), we have the results.
\end{proof}

\begin{theorem}\label{mainthm}
Any smooth, closed manifold $(\MM,g)$ with free isometric $\ZZ_2$ action admits a smooth symmetric, isometric embedding in $\RR^p$ for some $p\in\NN$.
\end{theorem}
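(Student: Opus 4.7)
The plan is to adapt Nash's original isometric embedding proof while preserving the $\ZZ_2$ symmetry at every step. Lemmas \ref{Z}, \ref{PHI}, and \ref{Y} already do the bulk of the structural work: combining Lemmas \ref{Z} and \ref{PHI} yields a symmetric free embedding $u=\Phi\circ\tilde{u}:\MM\hookrightarrow\RR^{N_0}$ with $N_0=s_{L(d+1)}+L(d+1)$, whose first and second partial derivatives are everywhere linearly independent. This \emph{freeness} is exactly what licenses Nash's perturbation scheme. After rescaling $u$, we may assume $g-\ud u^2>0$, i.e., $u$ is a short symmetric embedding.

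Lemma \ref{Y} supplies the key symmetric decomposition
\[
g-\ud u^2=\sum_{j=1}^{L}\bigl(\eta_j^2\,\ud\xi_j^2+(\eta_j^z)^2(\ud\xi_j^z)^2\bigr),
\]
where the pairs $(V_j,\xi_j)$ and $(z\circ V_j,\xi_j^z)$ are $\ZZ_2$-matched and $\xi_j^z-\xi_j>cI$. For each summand I would build a $\ZZ_2$-odd perturbation: pick a pair of fresh orthonormal directions $a_j,b_j$ in an enlarged ambient $\RR^{N_0+2L}$, set
\[
V_j^+(x)=\frac{1}{\lambda}\eta_j(x)\bigl(\cos(\lambda\xi_j(x))\,a_j+\sin(\lambda\xi_j(x))\,b_j\bigr)
\]
on $V_j$, and extend it to $z\circ V_j$ by $V_j^-(x)=-V_j^+(z\circ x)$ and zero elsewhere. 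The sum $V_j:=V_j^++V_j^-$ is then automatically odd under the $\ZZ_2$ action. A direct computation using $(V_j^-)^*\ud s^2=z^*(V_j^+)^*\ud s^2$ together with $\xi_j^z=\xi_j\circ z+cI$ and $\eta_j^z=\eta_j\circ z$ shows that $\ud V_j^2$ approximates $\eta_j^2\,\ud\xi_j^2$ on $V_j$ and $(\eta_j^z)^2(\ud\xi_j^z)^2$ on $z\circ V_j$, with a cross-term error of order $O(1/\lambda)$ coming from the derivatives of $\eta_j$. Summing over $j$ and taking $\lambda$ large, $V^{(1)}:=u+\sum_j V_j$ is a smooth symmetric embedding whose induced metric is within $O(1/\lambda)$ of $g$.

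To pass from approximate to exact isometry, I would run Nash's iterative scheme: at each stage $k$ the residual defect $g-\ud V^{(k)\,2}$ is itself $\ZZ_2$-invariant (both $g$ and $V^{(k)}$ being symmetric), so the decomposition of Lemma \ref{Y} applies verbatim to its smoothed version and the symmetric oscillation recipe above produces $V^{(k+1)}$ with odd parity preserved; freeness is propagated through the iteration exactly as in Nash's argument. The main obstacle is the usual Nash--Moser loss of derivatives, and the hard estimates are precisely those of the classical proof. The only new ingredient is that the smoothing operators must be $\ZZ_2$-equivariant; this is achieved by precomposing any standard convolution smoother with the group-averaging projector onto odd sections, which commutes with any isotropy-invariant smoother. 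The convergence estimates then transfer without change to the odd subspace and deliver a smooth symmetric isometric embedding of $\MM$ into $\RR^p$ for some finite $p$.
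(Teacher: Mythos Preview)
Your outline follows Nash's original 1956 iterative scheme (spiral perturbation $+$ smoothing $+$ repeat), whereas the paper takes the G\"unther route and avoids Nash--Moser entirely. Concretely, the paper observes that the spiral map
\[
u_\lambda=\Bigl(\tfrac{1}{\lambda}\psi_i\cos(\lambda\gamma_i),\ \tfrac{1}{\lambda}\psi_i\sin(\lambda\gamma_i)\Bigr)_{i=1}^{L}
\]
satisfies the \emph{exact} identity $g-\ud u^2=\ud u_\lambda^2-\lambda^{-2}\sum_j\ud\psi_j^2$ (the sine/cosine cross-terms cancel identically), so after one spiral the remaining task is to solve
\[
\ud w^2=\ud u^2-\lambda^{-2}\sum_j\ud\psi_j^2
\]
for a perturbation $w=u+a^2v$ of the free map $u$. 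This is done once, by a contraction-mapping argument in $C^{2,\alpha}$ that uses the linear independence of $\partial_i u,\partial_{ij}u$ to invert the linearized system; oddness of $v$ under $z$ follows from the parity of $E(u)$ and of the nonlinear terms. No smoothing operators or iteration are needed, and the ambient dimension stays at $N_0+2L$.

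Your plan can also be made to work, but as written it has a real gap: you invoke Lemma~\ref{Y} ``verbatim'' at every stage to decompose the residual $g-\ud V^{(k)\,2}$. Lemma~\ref{Y} is proved via a local \emph{isometric} embedding of the residual and therefore needs $g-\ud V^{(k)\,2}>0$; after the first spiral the residual is small but has no sign, so Lemma~\ref{Y} does not apply. In Nash's iteration one uses instead a decomposition of an arbitrary (small, smooth) symmetric 2-tensor into a finite sum of terms $a_j\,\ud f_j^2$ with controlled $a_j$, or one arranges partial corrections so the residual stays positive---either fix is standard but neither is supplied here. A minor point: your ``$O(1/\lambda)$ cross-term error'' is really $O(1/\lambda^2)$ in the metric (it is exactly the $\lambda^{-2}\ud\psi_j^2$ term), which is why the paper can absorb it by a single perturbation of $u$ rather than iterating.
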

\begin{proof}
By the remark following Lemma \ref{Z} and \ref{PHI} we have a smooth embedding $u:\MM\hookrightarrow \RR^{N}$ so that $g-\ud u^2>0$, where $N=s_{L(d+1)}+L(d+1)$. By Lemma \ref{Y}, with atlas $\mathcal{A}$ fixed we have
\[
g-\ud u^2=\sum_j \eta_j^2\ud \xi_j^2+\sum_j (\eta^z_j)^2(\ud \xi^z_j)^2.
\]
where $\xi^z_i-\xi_i=cI_{s_d+d}$. Denote $c=\frac{(2\ell+1)\pi}{\lambda}$, where $\lambda$ and $\ell$ will be determined later. To ease the notion, we define
\[
\gamma_i(x)=\left\{
\begin{array}{ll}
\xi_i(x) & \mbox{ when }x\in N_i\\
\xi^z_i(x) & \mbox{ when }x\in g\circ N_i
\end{array}\right..
\]
Then by the definition (\ref{definition:partition_of_unity_over_A}) we have
\[
g-\ud u^2=\sum_{j=1}^L \psi_j^2\ud \gamma_j^2.
\]
Given $\lambda>0$ we can define the following map $u_\lambda:\MM\rightarrow \RR^{2L}$:
\[
u_\lambda=\left(\frac{1}{\lambda}\psi_i\cos\left(\lambda\gamma_i\right),\frac{1}{\lambda}\psi_i\sin\left(\lambda\gamma_i\right)\right)_{i=1}^{L},
\]
where $\cos\left(\lambda\gamma_i\right)$ means taking cosine on each entry of $\lambda\gamma_i$.
Set $\ell$ so that $\frac{(2\ell+1)\pi}{\lambda}>1$ and we claim that $u_\lambda$ is a symmetric map. Indeed,
\begin{align*}
\begin{split}
\psi_i(z\circ x)\cos\left(\lambda\gamma_i(z\circ x)\right)
=\psi_i(x)\cos\left(\lambda\left(\gamma_i(x)+\frac{(2\ell+1)\pi}{\lambda}\right)\right)
=-\psi_i(x)\cos\left(\lambda\gamma_i(x)\right).
\end{split}
\end{align*}
and
\begin{align*}
\begin{split}
\psi_i(z\circ x)\sin\left(\lambda\gamma_i(z\circ x)\right)
=\psi_i(x)\sin\left(\lambda\left(\gamma_i(x)+\frac{(2\ell+1)\pi}{\lambda}\right)\right)
=-\psi_i(x)\sin\left(\lambda\gamma_i(x)\right).
\end{split}
\end{align*}
Direct calculation gives us
\[
g-\ud u^2=\ud u_\lambda^2-\frac{1}{\lambda^2}\sum_{j=1}^L\ud \psi_j^2.
\]
We show that when $\lambda$ is big enough, there exists a smooth symmetric embedding $w$ so that
\begin{equation}\label{pde1}
\ud w^2=\ud u^2-\frac{1}{\lambda^2}\sum_i^L\ud \psi_i^2.
\end{equation}
Since for all $\lambda>0$ we can find a $\ell$ so that $u_\lambda$ is a symmetric map without touching $\psi_i$, we can thus choosing $\lambda$ as large as possible so that (\ref{pde1}) is solvable. The solution $w$ provides us with a symmetric isometric embedding $(w,u_\lambda):\MM\hookrightarrow \RR^{N+2L}$ so that we have
\[
g=\ud u_\lambda^2+\ud w^2.
\]
Now we solve (\ref{pde1}). Fix $V_i$ and its relative $p\in V_i$. Suppose $w=u+a^2v$ is the solution where $a\in C^\infty_c(V_i)$ with $a=1$ on $\mbox{supp}\eta$. We claim if $\epsilon:=\lambda^{-1}$ is small enough we can find a smooth map $v:N_i\rightarrow \RR^N$ so that Equation \ref{pde1} is solved on $V_i$.

Equation \ref{pde1} can be written as
\begin{equation}\label{pde1x}
\ud (u+a^2v)^2=\ud u^2-\frac{1}{\lambda^2}\sum_i^L\ud \psi_i^2.
\end{equation}
which after expansion is
\begin{align}\label{pde2}
\begin{split}
\partial_j(a^2\partial_iu\cdot v)+\partial_i(a^2\partial_ju\cdot v)-2a^2\partial_{ij}u\cdot v&+a^4\partial_iv\cdot\partial_jv+\partial_i(a^3\partial_ja|v|^2)+\partial_j(a^3\partial_ia|v|^2)\\
&=-\frac{1}{\lambda^2}\ud \psi_i^2+2a^2(\partial_ia\partial_ja+a\partial_{ij}a)|v|^2
\end{split}
\end{align}
To simplify this equation we will solve the following Dirichlet problem:
\[
\left\{
\begin{array}{l}
\Delta(a\partial_iv\cdot\partial_jv)=\partial_i(a\Delta v\cdot\partial_jv)+\partial_j(a\Delta v\cdot\partial_iv)+r_{ij}(v,a)\\
a\partial_iv\cdot\partial_jv|_{\partial V_i}=0
\end{array}
\right.
\]
where
\[
r_{ij}=\Delta a\partial_iv\cdot \partial_jv-\partial_ja\Delta v\cdot \partial_jv-\partial_ja\partial_iv\Delta v+2\partial_\ell a\partial_\ell(\partial_iv\cdot\partial_jv)+2a(\partial_{i\ell}v\cdot\partial_{j\ell}v-\Delta v\cdot\partial_{ij}v)
\]
By solving this equation and multiplying it by $a^3$, we have
\begin{align}\label{simplify1}
\begin{split}
a^4\partial_iv\cdot\partial_jv=&\partial_i(a^3\Delta^{-1}(a\Delta v\cdot\partial_jv))+\partial_j(a^3\Delta^{-1}(a\Delta v\cdot\partial_iv))-3a^2\partial_ia\Delta^{-1}(a\Delta v\cdot\partial_jv)\\&-3a^2\partial_ja\Delta^{-1}(a\Delta v\cdot\partial_iv)+a^3\Delta^{-1}r_{ij}(v,a)
\end{split}
\end{align}
Plug Equation (\ref{simplify1}) into Equation (\ref{pde2}) we have
\begin{align}\label{pde3}
 \partial_j(a^2\partial_iu\cdot v-a^2N_i(v,a))+\partial_i(a^2\partial_ju\cdot v&-a^2N_j(v,a))-2a^2\partial_{ij}u\cdot v =-\frac{1}{\lambda^2}\ud \psi_i^2-2a^2M_{ij}(v,a) ,
\end{align}
where for $i,j=1,...d$
\[
\left\{
\begin{array}{l}
N_i(v,a)=-a\Delta^{-1}(a\Delta v\cdot \partial_iv)-a\partial_ia|v|^2\\
M_{ij}(v,a)=\frac{1}{2}a\Delta^{-1}r_{ij}(v,a)-(a\partial_{ij}a+\partial_ia\partial_ja)|v|^2-\frac{3}{2}(\partial_ia\Delta^{-1}(a\Delta v\cdot\partial_jv))+\partial_ja\Delta^{-1}(a\Delta v\cdot\partial_iv)
\end{array}\right.
\]
Note that by definition and the regularity theory of elliptic operator, we know both $N_i(\cdot,a)$ and $M_{ij}(\cdot,a)$ are maps in $C^\infty(V_i)$. We will solve Equation (\ref{pde3}) through solving the following differential system:
\begin{equation}\label{pde4}
\left\{
\begin{array}{lll}
\partial_iu\cdot v&=&N_i(v,a)\\
\partial_{ij}u\cdot v&=&-\frac{1}{\lambda^2}\ud \psi_i^2-M_{ij}(v,a).
\end{array}
\right.
\end{equation}
Since by construction we know $u$ has linearly independent $\partial_iu$ and $\partial_{ij}u$, $i,j=1,...,d$, we can solve the under-determined linear system (\ref{pde4}) by
\begin{equation}\label{pde5}
v=E(u)F(v,h),
\end{equation}
where
\[
E(u)=\left[\left(\begin{array}{c}\partial_iu\\ \partial_{ij}u\end{array}\right)^T\left(\begin{array}{c}\partial_iu\\ \partial_{ij}u\end{array}\right)\right]^{-1}\left(\begin{array}{c}\partial_iu\\ \partial_{ij}u\end{array}\right)^T
\]
and
\[
F(v,\epsilon)=\left(N_i(v,a),-\frac{1}{\lambda^2}\ud \psi_i^2-M_{ij}(v,a)\right)^T=\left(N_i(v,a),-\epsilon^2\ud \psi_i^2-M_{ij}(v,a)\right)^T.
\]
Next we will apply contraction principle to show the existence of the solution $v$. Substitute $v=\mu v'$ for some $\mu\in\RR$ to be determined later. By the fact that $N_i(0,a)=0$ and $M_{ij}(0,a)=0$ we can rewrite Equation (\ref{pde5}) as
\[
w=\mu E(u)F(v',0)+\frac{1}{\mu}E(u)F(0,\epsilon).
\]
Set
\[
\Sigma=\left\{w\in C^{2,\alpha}(V_i,\RR^N); \|w\|_{2,\alpha}\leq 1\right\}
\]
and
\[
Tw=\mu E(u)F(v',0)+\frac{1}{\mu}E(u)F(0,\epsilon).
\]
By taking
\[
\mu=\left(\frac{\|E(u)F(0,\epsilon)\|_{2,\alpha}}{\|E(u)\|_{2,\alpha}}\right)^{1/2},
\]
we have
\[
\|Tw\|_{2,\alpha}\leq\mu\|E(u)\|_{2,\alpha}\|F(v',0)\|_{2,\alpha}+\frac{1}{\mu}\|E(u)F(0,\epsilon)\|_{2,\alpha}=C_1(\|E(u)\|_{2,\alpha}\|E(u)F(0,\epsilon)\|_{2,\alpha})^{1/2},
\]
where $C_1$ depends only on $\|a\|_{4,\alpha}$. Thus $T$ maps $\Sigma$ into $\Sigma$ if $\|E(u)\|_{2,\alpha}\|E(u)F(0,\epsilon)\|_{2,\alpha}\leq 1/C_1^2 $. This can be achieved by taking $\epsilon$ small enough, that is, by taking $\lambda$ big enough.

Similarly we have
\begin{align*}
 \|Tw_1-Tw_2\|_{2,\alpha}\leq &\,\mu\|E(u)\|_{2,\alpha}\|F(w_1,0)-F(w_2,0)\|_{2,\alpha}\\
\leq\,& C_2\|w_1-w_2\|_{2,\alpha}(\|E(u)\|_{2,\alpha}\|E(u)F(0,\epsilon)\|_{2,\alpha})^{1/2}.
\end{align*}
Then if $\|E(u)\|_{2,\alpha}\|E(u)F(0,\epsilon)\|_{2,\alpha}\leq \frac{1}{C_1^2+C_2^2}$ we show that $T$ is a contraction map. By the contraction mapping principle, we have a solution $v\in\Sigma$.

Further, since we have
\[
v=\mu^2 E(u)F(w,0)+E(u)F(0,\epsilon),
\]
by definition of $\mu$ we have
\[
\|v\|_{2,\alpha}\leq C\|E(u)F(0,\epsilon)\|_{2,\alpha},
\]
where $C$ is independent of $u$ and $v$. Thus by taking $\epsilon$ small enough, we can not only make $w=u+a^2v$ satisfy Equation (\ref{pde1}) but also make $w$ an embedding. Thus we are done with the patch $V_i$.

Now we take care $V_i$'s companion $z\circ V_i$. Fix charts around $x\in V_i$ and $z\circ x\in z\circ V_i$ so that $y\in V_i$ and $g\circ y\in z\circ V_i$ have the same coordinates for all $y\in V_i$. Working on these charts we have
\[
\partial_ju=\partial_j(\Phi\circ \tilde{u})=\partial_\ell\Phi\partial_j\tilde{u}^\ell
\]
and
\[
\partial_{ij}u=\partial_{ij}(\Phi\circ \tilde{u})=\partial_{k\ell}\Phi\partial_i \tilde{u}^k\partial_j \tilde{u}^\ell+\partial_\ell\Phi\partial_{ij}\tilde{u}^\ell.
\]
Note that since the first derivative of $\Phi$ is an even function while the second derivative of $\Phi$ is an odd function and $\tilde{u}(g\circ y)=-\tilde{u}(y)$ for all $y\in N_i$, we have
\[
E(u)(z\circ x)=-E(u)(x).
\]
Moreover, we have $N_i(v,a)=N_i(-v,a)$ and $M_{ij}(v,a)=M_{ij}(-v,a)$ for all $i,j=1,...,d$. Thus in $g\circ N_i$, we have $-v$ as the solution to Equation (\ref{pde4}) and $w-a^2v$ as the modified embedding. After finishing the perturbation of $V_i$ and $z\circ V_i$, the modified embedding is again symmetric.

Inductively we can perturb the embedding of $V_i$ for all $i=1,...,L$. Since there are only finite patches, by choosing $\epsilon$ small enough, we finish the proof.
\end{proof}
Note that we do not show the optimal dimension $p$ of the embedded Euclidean space but simply show the existence of the symmetric isometric embedding. How to take the symmetry into account in the optimal isometric embedding will be reported in the future work.
\begin{corollary}
Any smooth, closed non-orientable manifold $(\MM,g)$ has an orientable double covering embedded symmetrically inside $\RR^p$ for some $p\in\NN$.
\end{corollary}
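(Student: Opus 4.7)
The plan is to reduce this corollary directly to Theorem \ref{mainthm} by producing, from the non-orientable $(\MM,g)$, a smooth closed Riemannian manifold equipped with a free isometric $\ZZ_2$ action whose orbit space is $\MM$. The natural candidate is the classical orientable double cover $\widetilde{\MM}$, and the $\ZZ_2$ action is the deck transformation that swaps orientations in each fiber. Once this data is assembled, Theorem \ref{mainthm} immediately supplies a smooth symmetric isometric embedding of $\widetilde{\MM}$ into $\RR^p$ for some $p$, and symmetry of the embedding with respect to the origin is the statement that $\widetilde{\MM}$ sits inside $\RR^p$ as a double cover of $\MM$ symmetric under $x\mapsto -x$.

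First I would construct $\widetilde{\MM}$ set-theoretically as $\{(x,o_x):x\in\MM,\,o_x\text{ an orientation of }T_x\MM\}$, with projection $\pi:\widetilde{\MM}\to\MM$ forgetting the orientation. Standard arguments (charts lift locally by picking a continuous choice of orientation on each coordinate ball) endow $\widetilde{\MM}$ with the unique smooth structure making $\pi$ a local diffeomorphism. Non-orientability of $\MM$ implies $\widetilde{\MM}$ is connected, and since $\MM$ is closed, so is $\widetilde{\MM}$; by construction $\widetilde{\MM}$ is orientable. Define $z:\widetilde{\MM}\to\widetilde{\MM}$ by $z\circ(x,o_x)=(x,-o_x)$. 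This is a smooth involution with no fixed point, so the group $\ZZ_2=\{1,z\}$ it generates acts freely on $\widetilde{\MM}$, and clearly $\widetilde{\MM}/\ZZ_2=\MM$.

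Second, I would equip $\widetilde{\MM}$ with the pulled-back metric $\tilde g:=\pi^*g$, which is a smooth Riemannian metric because $\pi$ is a local diffeomorphism. The identity $\pi\circ z=\pi$ gives
\[
z^*\tilde g=z^*\pi^*g=(\pi\circ z)^*g=\pi^*g=\tilde g,
\]
so $z$ is an isometry of $(\widetilde{\MM},\tilde g)$. Thus $(\widetilde{\MM},\tilde g)$ satisfies every hypothesis of Theorem \ref{mainthm}.

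Finally, applying Theorem \ref{mainthm} to $(\widetilde{\MM},\tilde g)$ produces some $p\in\NN$ and a smooth isometric embedding $\Psi:\widetilde{\MM}\hookrightarrow\RR^p$ satisfying $\Psi(z\circ \tilde x)=-\Psi(\tilde x)$ for all $\tilde x\in\widetilde{\MM}$. This is exactly the desired symmetric embedding of the orientable double cover of $\MM$. No genuinely new difficulty arises at this stage, since all of the real work, namely constructing an embedding equivariant with respect to the antipodal $\ZZ_2$ action on $\RR^p$, has been carried out in Lemmas \ref{Z}, \ref{PHI}, \ref{Y} and Theorem \ref{mainthm}. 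The only point requiring mild care in writing the argument is to verify the regularity and free/smooth/isometric properties of the double cover, which are standard and summarized above.
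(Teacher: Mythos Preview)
Your proposal is correct and follows exactly the paper's approach: the paper's proof simply asserts that the orientable double covering has an isometric free $\ZZ_2$ action and then invokes Theorem~\ref{mainthm}, and you have merely supplied the (standard) details behind that assertion. Nothing is missing or different.
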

\begin{proof}
It is well known that the orientable double covering of $\MM$ has isometric free $\ZZ_2$ action. By applying Theorem \ref{mainthm} we get the result.
\end{proof}

\bibliographystyle{imaiai}
\bibliography{spectral}

\ifx\undefined\BySame
\newcommand{\BySame}{\leavevmode\rule[.5ex]{3em}{.5pt}\ }
\fi
\ifx\undefined\textsc
\newcommand{\textsc}[1]{{\sc #1}}
\newcommand{\emph}[1]{{\em #1\/}}
\let\tmpsmall\small
\renewcommand{\small}{\tmpsmall\sc}
\fi
\begin{thebibliography}{99}

\bibitem{Arias-Castro_Lerman_Zhang:2014}
\textsc{Arias-Castro, E., Lerman, G.  {\small \&} Zhang, T.}  (2014) Spectral
  clustering based on local PCA. arXiv:1301.2007.

\bibitem{bandeira_singer_spielman:2013}
\textsc{Bandeira, A.~S., Singer, A.  {\small \&} Spielman, D.~A.}  (2013) {A
  Cheeger Inequality for the Graph Connection Laplacian}. \emph{SIAM Journal on
  Matrix Analysis and Applications}, \textbf{34}(4), 1611--1630.

\bibitem{belkin_niyogi:2003}
\textsc{Belkin, M.  {\small \&} Niyogi, P.}  (2003) {Laplacian Eigenmaps for
  Dimensionality Reduction and Data Representation}. \emph{Neural. Comput.},
  \textbf{15}(6), 1373--1396.

\bibitem{belkin_niyogi:2005}
\textsc{\BySame{}}  (2005) Towards a Theoretical Foundation for
  {L}aplacian-Based Manifold Methods. in \emph{Proceedings of the 18th
  Conference on Learning Theory (COLT)}, pp. 486--500.

\bibitem{belkin_niyogi:2007}
\textsc{\BySame{}}  (2007) {Convergence of {L}aplacian eigenmaps}. in
  \emph{Adv. Neur. In.: Proceedings of the 2006 Conference}, vol.~19, p. 129.
  The MIT Press.

\bibitem{berard_besson_gallot:1994}
\textsc{B\'erard, P., Besson, G.  {\small \&} Gallot, S.}  (1994) Embedding
  {R}iemannian manifolds by their heat kernel. \emph{Geom. Funct. Anal.},
  \textbf{4}, 373--398.

\bibitem{Berline_Getzler_Vergne:2004}
\textsc{Berline, N., Getzler, E.  {\small \&} Vergne, M.}  (2004) \emph{Heat
  Kernels and Dirac Operators}. Springer.

\bibitem{bishop}
\textsc{Bishop, R.~L.  {\small \&} Crittenden, R.~J.}  (2001) \emph{Geometry of
  Manifolds}. Amer Mathematical Society.

\bibitem{Chatelin:2011}
\textsc{Chatelin, F.}  (2011) \emph{Spectral Approximation of Linear
  Operators}. SIAM.

\bibitem{cheng_wu:2012}
\textsc{Cheng, M.-Y.  {\small \&} Wu, H.-T.}  (2013) Local Linear Regression on
  Manifolds and its Geometric Interpretation. \emph{J. Am. Stat. Assoc.},
  \textbf{108}, 1421--1434.

\bibitem{coifman_lafon:2006}
\textsc{Coifman, R.~R.  {\small \&} Lafon, S.}  (2006) Diffusion maps.
  \emph{Appl. Comput. Harmon. Anal.}, \textbf{21}(1), 5--30.

\bibitem{Dunford_Schwartz:1958}
\textsc{Dunford, N.  {\small \&} Schwartz, J.~T.}  (1958) \emph{Linear
  operators, volume 1}. Wiley-Interscience.

\bibitem{frank:2006}
\textsc{Frank, J.}  (2006) \emph{Three-Dimensional Electron Microscopy of
  Macromolecular Assemblies: Visualization of Biological Molecules in Their
  Native State}. Oxford University Press, New York, 2nd edn.

\bibitem{gilkey:1974}
\textsc{Gilkey, P.}  (1974) \emph{The Index Theorem and the Heat Equation}.
  Princeton.

\bibitem{Gine_Koltchinskii:2006}
\textsc{Gin\'e, E.  {\small \&} Koltchinskii, V.}  (2006) Empirical graph
  Laplacian approximation of LaplaceÐBeltrami operators: Large sample results.
  in \emph{IMS Lecture Notes}, ed. by A.~Bonato,  {\small \&} J.~Janssen,
  vol.~51 of \emph{Monograph Series}, pp. 238--259. The Institute of
  Mathematical Statistics.

\bibitem{Gong_Zhao_Medioni:2012}
\textsc{Gong, D., Zhao, X.  {\small \&} Medioni, G.}  (2012) Robust multiple
  manifolds structure learning. in \emph{Proceedings of the 29th International
  Conference on Machine Learning (ICML-12)}, pp. 321--328.

\bibitem{Hadani_Singer:2011a}
\textsc{Hadani, R.  {\small \&} Singer, A.}  (2011) {Representation Theoretic
  Patterns in Three-Dimensional Cryo-Electron Microscopy IIâ The Class
  Averaging Problem}. \emph{Foundations of Computational Mathematics},
  \textbf{11}(5), 589--616.

\bibitem{hein_audibert_luxburg:2005}
\textsc{Hein, M., Audibert, J.  {\small \&} von Luxburg, U.}  (2005) From
  Graphs to Manifolds - weak and strong pointwise consistency of graph
  {L}aplacians. in \emph{Proceedings of the 18th Conference on Learning Theory
  (COLT)}, pp. 470--485.

\bibitem{Huettel_Song_McCarthy:2008}
\textsc{Huettel, S.~A., Song, A.~W.  {\small \&} McCarthy, G.}  (2008)
  \emph{Functional Magnetic Resonance Imaging}. Sinauer Associates, 2 edn.

\bibitem{Kaslovsky_Meyer:2014}
\textsc{Kaslovsky, D.  {\small \&} Meyer, F.}  (2014) Non-Asymptotic Analysis
  of Tangent Space Perturbation. \emph{Information and Inference: a Journal of
  the IMA}, \textbf{Accepted for publication}.

\bibitem{little_jung_maggioni:2009}
\textsc{Little, A., Jung, Y.-M.  {\small \&} Maggioni., M.}  (2009) Multiscale
  Estimation of Intrinsic Dimensionality of Data Sets. \emph{Proc. AAAI}.

\bibitem{Nash1954}
\textsc{Nash, J.}  (1954) {C\^{}1 Isometric Imbeddings}. \emph{Annals of
  Mathematics}, \textbf{60}(3), 383--396.

\bibitem{Nash1956}
\textsc{\BySame{}}  (1956) {The Imbedding Problem for {R}iemannian Manifolds}.
  \emph{Annals of Mathematics}, \textbf{63}(1), 20--63.

\bibitem{NSW}
\textsc{Niyogi, P., Smale, S.  {\small \&} Weinberger, S.}  (2009) Finding the
  Homology of Submanifolds with High Confidence from Random Samples. in
  \emph{Twentieth Anniversary Volume:}, pp. 1--23. Springer New York.

\bibitem{Ovsjanikov_Ben-Chen_Solomon_Butscher_Guibas:2012}
\textsc{Ovsjanikov, M., Ben-Chen, M., Solomon, J., Butscher, A.  {\small \&}
  Guibas, L.}  (2012) {Functional Maps: A Flexible Representation of Maps
  Between Shapes}. \emph{ACM Transactions on Graphics}, \textbf{4}(31).

\bibitem{Palais1968}
\textsc{Palais, R.~S.}  (1968) \emph{Foundations of Global Non-linear
  Analysis}. W.A. Benjamin, Inc.

\bibitem{singer:2006}
\textsc{Singer, A.}  (2006) From graph to manifold {Laplacian}: The convergence
  rate. \emph{Appl. Comput. Harmon. Anal.}, \textbf{21}(1), 128--134.

\bibitem{singer_wu:2011}
\textsc{Singer, A.  {\small \&} Wu, H.-T.}  (2011) Orientability and diffusion
  map. \emph{Appl. Comput. Harmon. Anal.}, \textbf{31}(1), 44--58.

\bibitem{singer_wu:2012}
\textsc{\BySame{}}  (2012) Vector Diffusion Maps and the Connection
  {Laplacian}. \emph{Comm. Pure Appl. Math.}, \textbf{65}(8), 1067--1144.

\bibitem{singer_zhao_shkolnisky_hadani:2011}
\textsc{Singer, A., Zhao, Z., Shkolnisky, Y.  {\small \&} Hadani, R.}  (2011)
  Viewing Angle Classification of Cryo-Electron Microscopy Images Using
  Eigenvectors. \emph{SIAM J. Imaging Sci.}, \textbf{4}(2), 723--759.

\bibitem{sternberg:1999}
\textsc{Sternberg, S.}  (1999) \emph{Lectures on Differential Geometry}.
  American Mathematical Society.

\bibitem{vanderVaart_Wellner:1996}
\textsc{van~der Vaart, A.  {\small \&} Wellner, J.}  (1996) \emph{Weak
  Convergence and Empirical Processes}. Springer-Verlag.

\bibitem{VonLuxburg2008}
\textsc{von Luxburg, U., Belkin, M.  {\small \&} Bousquet, O.}  (2008)
  {Consistency of spectral clustering}. \emph{Ann. Stat.}, \textbf{36}(2),
  555--586.

\bibitem{Wang_Slavakis_Lerman:2014}
\textsc{Wang, X., Slavakis, K.  {\small \&} Lerman, G.}  (2014) Riemannian
  multi-manifold modeling. arXiv:1410.0095.

\bibitem{Wang_Jiang_Wu_Zhou:2011}
\textsc{Wang, Y., Jiang, Y., Wu, Y.  {\small \&} Zhou, Z.-H.}  (2011) Spectral
  clustering on multiple manifolds. \emph{Neural Networks, IEEE Transactions
  on}, \textbf{22}(7), 1149--1161.

\bibitem{Wu:2013}
\textsc{Wu, H.-T.}  (2013) Embedding Riemannian Manifolds by the Heat Kernel of
  the Connection Laplacian. \emph{submitted}, arXiv:1305.4232 [math.DG].

\bibitem{Zhao_Singer:2014}
\textsc{Zhao, Z.  {\small \&} Singer, A.}  (2014) Rotationally Invariant Image
  Representation for Viewing Direction Classification in Cryo-EM. \emph{Journal
  of Structural Biology}, \textbf{186}(1), 153--166.

\end{thebibliography}

\end{document}